\newtheorem{theorem}{Theorem}[section]
\newtheorem{lemma}[theorem]{Lemma}
\newtheorem{prop}[theorem]{Proposition}
\newtheorem{cor}[theorem]{Corollary}
\theoremstyle{definition}
\theoremstyle{remark}
\newtheorem{remark}[theorem]{Remark}
\numberwithin{equation}{section}
\def\Im{\operatorname{Im}}
\def\Ker{\operatorname{Ker}}
\def\Aut{\operatorname{Aut}}
\def\GL{\operatorname{GL}}
\def\Coker{\operatorname{Coker}}
\def\Int{\operatorname{Int}}
\def\Hom{\operatorname{Hom}}
\def\Out{\operatorname{Out}}
\def\Z{{\mathbb Z}}
\newcommand\hb[1]{\mathcal{H}_{#1}}
\begin{document}

\title[A twisted first homology of the handlebody MCG]{A twisted first homology group of the handlebody mapping class group}
\author{Tomohiko Ishida}
\address{Department of Mathematics, 
Kyoto University, 
Kitashirakawa Oiwake-cho, Sakyo-ku, Kyoto 606-8502, Japan.}
\email{ishidat@math.kyoto-u.ac.jp}

\author{Masatoshi Sato}
\address{Department of Mathematics Education, Faculty of Education,
Gifu University, 1-1 Yanagido, Gifu City, Gifu 501-1193, Japan}
\email{msato@gifu-u.ac.jp}

\date{\today}
\subjclass[2010]{Primary 55R40, Secondary 20F28, 57R20}
\keywords{mapping class group, 3-dimensional handlebody, cohomology of groups, }

\begin{abstract}
Let $H_g$ be a 3-dimensional handlebody of genus $g$.
We determine the twisted first homology group of the mapping class group of $H_g$ 
with coefficients in the first integral homology group of the boundary surface $\partial H_g$ for $g\ge2$.
\end{abstract}

\maketitle
\section{Introduction}
Let $H_g$ be a 3-dimensional handlebody of genus $g$,
and $\Sigma_g$ the boundary surface $\partial H_g$.
We denote by $\mathcal{H}_g$ and $\mathcal{M}_g$
the mapping class group of $H_g$ and the boundary surface $\Sigma_{g}$, respectively.
These are the groups of isotopy classes of orientation preserving homeomorphisms of $\Sigma_g$ and $H_g$.
Let $D$ be a closed 2-disk in the boundary $\Sigma_g$ of the handlebody,
and pick a point $*$ in $\Int D$.
Let us denote by $\mathcal{H}_g^*$ and $\mathcal{H}_{g,1}$
the groups of the isotopy classes of orientation preserving homeomorphisms of $H_g$
fixing $*$ and $D$ pointwise, respectively.
We also denote by $\mathcal{M}_g^*$ and $\mathcal{M}_{g,1}$
the groups of the isotopy classes of orientation preserving homeomorphisms of $\Sigma_g$
fixing $*$ and $D$ pointwise, respectively.
We use integral coefficients for homology groups unless specified throughout the paper.

In the cases of the mapping class group  $\mathcal{M}_g^*$ and $\mathcal{M}_g$ of a surface $\Sigma_g$,
Morita~\cite[Corollary~5.4]{morita89} determined the first homology group
with coefficients in the first integral homology group of the surface.
Morita~\cite[Remark~6.3]{morita93} extended
the first Johnson homomorphism to a crossed homomorphism 
$\mathcal{M}_g^*\to \frac{1}{2}\Lambda^3(H_1(\Sigma_g))$,
and showed that the contraction
of this crossed homomorphism gives isomorphisms
$H_1(\mathcal{M}_g^*;H_1(\Sigma_g))\cong \mathbb{Z}$
and $H_1(\mathcal{M}_g;H_1(\Sigma_g))\cong \mathbb{Z}/(2g-2)\mathbb{Z}$ when $g\ge2$.
For twisted homology groups of the mapping class groups of nonorientable surfaces,
see Stukow~\cite{stukow15}.
In the cases of the automorphism group $\Aut F_n$
and the outer automorphism group $\Out F_n$ of a free group $F_n$ of rank $n$,
Satoh~\cite{satoh06} computed
$H_1(\Aut F_n;H^1(F_n))$ and $H_1(\Out F_n;H^1(F_n))$ for $n\ge2$.
Kawazumi~\cite{kawazumi05} extended the first Andreadakis-Johnson homomorphism to
a crossed homomorphism $\Aut F_n\to H^1(F_n)\otimes H_1(F_n)^{\otimes 2}$.
The contraction of this crossed homomorphism also gives isomorphisms
$H_1(\Aut F_n;H^1(F_n))\cong \mathbb{Z}$ and
$H_1(\Out F_n;H^1(F_n))\cong \mathbb{Z}/(n-1)\mathbb{Z}$.

In this paper,
we compute the twisted first homology groups of $\mathcal{H}_g$ and $\mathcal{H}_g^*$
with coefficients in the first integral homology group of the boundary surface $\Sigma_g$.
Note that the restrictions of homeomorphisms of $H_g$ to  $\Sigma_g$ induce
an injective homomorphism $\mathcal{H}_g\to\mathcal{M}_g$,
and we treat the group $\mathcal{H}_g$ as a subgroup of $\mathcal{M}_g$.
The followings are main theorems in this paper.
\begin{theorem}\label{thm:main theorem}
\[
H_1(\mathcal{H}_g;H_1(\Sigma_g))\cong
\begin{cases}
\mathbb{Z}/{(2g-2)}\Z &\text{ if }g\ge4,\\
\mathbb{Z}/4\Z\oplus\mathbb{Z}/2\Z &\text{ if }g=3,\\
(\mathbb{Z}/2\Z )^2&\text{ if }g=2,
\end{cases}
\]
Furthermore, when $g\ge4$,
the homomorphism
$H_1(\mathcal{H}_g;H_1(\Sigma_g))\to H_1(\mathcal{M}_g;H_1(\Sigma_g))$ 
induced by the inclusion is an isomorphism.
When $g=2,3$,
this homomorphism is surjective 
and the kernel is isomorphic to $\mathbb{Z}/2\mathbb{Z}$.
\end{theorem}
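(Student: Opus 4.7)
The plan is to analyze the inclusion-induced map
$\iota_*\colon H_1(\mathcal{H}_g; H_1(\Sigma_g)) \to H_1(\mathcal{M}_g; H_1(\Sigma_g)) \cong \mathbb{Z}/(2g-2)\mathbb{Z}$
by determining its image and kernel separately. For surjectivity, I would restrict Morita's extended first Johnson crossed homomorphism $\tilde{\tau}\colon \mathcal{M}_g^* \to \tfrac{1}{2}\Lambda^3 H_1(\Sigma_g)$ (and its contraction) to $\mathcal{H}_g^*$; the pullback represents an element of $H^1(\mathcal{H}_g^*; H_1(\Sigma_g))$ whose pairing with an explicit $1$-cycle---such as a Dehn twist along a meridional curve tensored with a carefully chosen class in $H_1(\Sigma_g)$---is nontrivial, exhibiting an element of $H_1(\mathcal{H}_g; H_1(\Sigma_g))$ that maps to a generator of $\mathbb{Z}/(2g-2)\mathbb{Z}$.

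For the matching upper bound, my approach is to use a finite presentation of $\mathcal{H}_g$ (for example Wajnryb's or Suzuki's) and compute $H_1(\mathcal{H}_g; H_1(\Sigma_g))$ via Fox calculus applied to the resulting partial free resolution, obtaining the twisted first homology as the cokernel of an explicit Jacobian matrix acting on a direct sum of copies of $H_1(\Sigma_g)$. For $g \ge 4$ this calculation should yield $\mathbb{Z}/(2g-2)\mathbb{Z}$ exactly, forcing $\iota_*$ to be an isomorphism. For $g = 2, 3$ the presentation exposes an additional $\mathbb{Z}/2$ summand. To identify this extra factor conceptually, I would exploit the Lagrangian subgroup $A := \ker(H_1(\Sigma_g) \to H_1(H_g))$, preserved by $\mathcal{H}_g$ but not by $\mathcal{M}_g$: the short exact sequence $0 \to A \to H_1(\Sigma_g) \to H_1(H_g) \to 0$ of $\mathcal{H}_g$-modules produces a long exact sequence whose connecting homomorphism should yield the extra $2$-torsion in low genus, plausibly detected by a mod-$2$ Johnson-type invariant or an exceptional handlebody involution.

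The main obstacle will be this small-genus analysis: pinpointing the specific element of $\mathcal{H}_g$ (perhaps an involution or a product of meridional twists) together with the class in $H_1(\Sigma_g)$ that jointly witness the extra $\mathbb{Z}/2$ factor, and verifying that it is killed by $\iota_*$. Because $g = 2, 3$ involve exceptional relations not present for $g \ge 4$---for instance the $g = 2$ hyperelliptic-type relation---a careful case-by-case treatment of the presentations of $\mathcal{H}_2$ and $\mathcal{H}_3$ will be required to reconcile the Fox-calculus upper bound with the crossed-homomorphism lower bound and confirm the split extension structure $\mathbb{Z}/(2g-2)\mathbb{Z} \oplus \mathbb{Z}/2\mathbb{Z}$ rather than a nonsplit one.
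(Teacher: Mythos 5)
Your plan is workable and its two halves each have a counterpart in the paper, but the division of labour is different. For the lower bound you and the paper do essentially the same thing: the surjection $H_1(\mathcal{H}_g;H_1(\Sigma_g))\to\mathbb{Z}/(2g-2)\mathbb{Z}$ is exactly the mod $(2g-2)$ contraction of Morita's crossed homomorphism, which the paper packages as the connecting map of the coefficient sequence $0\to\mathbb{Z}/(2g-2)\mathbb{Z}\to H_1(U\Sigma_g)\to H_1(\Sigma_g)\to 0$ together with an explicit verification that $H_1(U\Sigma_g)_{\mathcal{H}_g}=0$ (your ``carefully chosen class paired with a meridional twist'' is the same computation in dual form). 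Likewise, your use of the Lagrangian sequence $0\to L\to H_1(\Sigma_g)\to H_1(H_g)\to 0$ and of Wajnryb's presentation for the exceptional genera is precisely what the paper does in Sections~4--5, down to the universal coefficient argument. Where you genuinely diverge is the upper bound for $g\ge4$: you propose to run the Fox-calculus/crossed-homomorphism computation on Wajnryb's presentation for all $g$, whereas the paper avoids this entirely by feeding the extension $1\to\mathcal{L}_g\to\mathcal{H}_g\to\Out F_g\to1$ through Luft's theorem (that $\mathcal{L}_g$ is normally generated by a single meridional twist, after killing $t_\delta$ with a lantern relation) and Satoh's computations of $H_1(\Out F_g;H_1(F_g))$ and $H_1(\Out F_g;H^1(F_g))$. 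The paper's route buys a short, structural argument in which the only explicit work is exhibiting a few elements of $\mathcal{H}_g(\alpha_1)$ that kill the coinvariants $H_1(\mathcal{L}_g;L)_{\mathcal{H}_g}$ and bound $H_1(\mathcal{L}_g;H/L)_{\mathcal{H}_g}$; your route is uniform over $g$ but requires verifying compatibility with all of Wajnryb's relations (P1)--(P12), including the rather formidable family (P12), for general $g$ --- the paper explicitly confines that computation to $g=2,3$ because it is so much heavier. Be aware also that for $g=3$ the answer is $\mathbb{Z}/4\mathbb{Z}\oplus\mathbb{Z}/2\mathbb{Z}$, not $(\mathbb{Z}/2\mathbb{Z})^3$, so the ``extra $\mathbb{Z}/2$'' you expect to see in low genus must be shown to split off the full cyclic group $\mathbb{Z}/(2g-2)\mathbb{Z}$ rather than interact with its $2$-torsion; this is exactly the kind of extension question your presentation computation would have to settle and which the paper resolves by computing $H^1(\mathcal{H}_3;H_A)$ for every coefficient ring $A=\mathbb{Z}/n\mathbb{Z}$ at once.
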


\begin{theorem}\label{thm:main theorem2}
\item
\[
H_1(\mathcal{H}_{g,1};H_1(\Sigma_g))\cong H_1(\mathcal{H}_g^*;H_1(\Sigma_g))\cong
\begin{cases}
\mathbb{Z} &\text{if }g\ge4,\\
\mathbb{Z}\oplus\mathbb{Z}/2\mathbb{Z} &\text{if }g=2,3.
\end{cases}
\]
Furthermore, when $g\ge4$,
the homomorphism
$H_1(\mathcal{H}_g^*;H_1(\Sigma_g))\to H_1(\mathcal{M}_g^*;H_1(\Sigma_g))$ 
induced by the inclusion is an isomorphism.
When $g=2,3$,
this homomorphism is surjective 
and the kernel is isomorphic to $\mathbb{Z}/2\mathbb{Z}$.
\end{theorem}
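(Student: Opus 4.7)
Set $H:=H_1(\Sigma_g)$. The argument proceeds in three stages, using Theorem~\ref{thm:main theorem}.

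\textbf{Stage 1 (reduction $\mathcal{H}_{g,1}\to\mathcal{H}_g^*$).} The kernel of the natural surjection $\mathcal{H}_{g,1}\twoheadrightarrow\mathcal{H}_g^*$ is the central infinite cyclic subgroup $\langle T_{\partial D}\rangle$. Since $\partial D$ bounds in $\Sigma_g$, the twist $T_{\partial D}$ acts trivially on $H$, so the Lyndon--Hochschild--Serre five-term exact sequence specializes to
\[
H_{\mathcal{H}_g^*}\longrightarrow H_1(\mathcal{H}_{g,1};H)\longrightarrow H_1(\mathcal{H}_g^*;H)\longrightarrow 0.
\]
For $g\ge2$ the coinvariants $H_{\mathcal{H}_g^*}$ vanish: meridian Dehn twists $T_{\alpha_i}\in\mathcal{H}_g$ kill the meridian classes $\alpha_i$ (they bound disks in $H_g$), and the surjection $\mathcal{H}_g\twoheadrightarrow\GL(H_1(H_g))=\GL(g,\Z)$ kills the remaining classes in the quotient $H/A$. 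Hence $H_1(\mathcal{H}_{g,1};H)\cong H_1(\mathcal{H}_g^*;H)$, giving the first claimed isomorphism.

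\textbf{Stage 2 (computation of $H_1(\mathcal{H}_g^*;H)$).} Apply the five-term sequence to the Birman-type extension $1\to F_g\to\mathcal{H}_g^*\to\mathcal{H}_g\to1$, where $F_g=\pi_1(H_g,\ast)$ is the point-pushing subgroup. Point-pushings may be supported in a tubular neighborhood of an interior loop, so they act trivially on $H$ and $H_1(F_g;H)=L\otimes H$ with $L:=H_1(H_g)$. The sequence
\[
H_2(\mathcal{H}_g;H)\xrightarrow{d}(L\otimes H)_{\mathcal{H}_g}\longrightarrow H_1(\mathcal{H}_g^*;H)\longrightarrow H_1(\mathcal{H}_g;H)\longrightarrow 0,
\]
combined with Theorem~\ref{thm:main theorem}, reduces the problem to computing $(L\otimes H)_{\mathcal{H}_g}$ and the image of $d$. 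Using $0\to L\otimes A\to L\otimes H\to L\otimes L\to 0$ (with $A$ the meridian Lagrangian) together with the fact that $\mathcal{H}_g$ maps onto the Siegel parabolic preserving $A$, one finds $(L\otimes A)_{\mathcal{H}_g}\cong\Z$ (detected by the trace $L\otimes L^\ast=\operatorname{End}(L)\to\Z$, via $A\cong L^\ast$), while $(L\otimes L)_{\mathcal{H}_g}$ is purely torsion. The $\Z$-summand is realized concretely by restricting Morita's contracted extended Johnson crossed homomorphism $\mathcal{M}_g^*\to H$ to $\mathcal{H}_g^*$, which provides a cohomology class in $H^1(\mathcal{H}_g^*;H)$ whose pairing with the putative generator of the $\Z$-summand is primitive.

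\textbf{Stage 3 (comparison with $\mathcal{M}_g^*$).} The inclusions $\mathcal{H}_g^*\hookrightarrow\mathcal{M}_g^*$ and $\mathcal{H}_g\hookrightarrow\mathcal{M}_g$ commute with the ``forget the marked point'' maps. This yields a morphism from the handlebody Birman five-term sequence of Stage 2 to the analogous surface Birman sequence (Morita~\cite{morita93}). The five lemma, combined with Theorem~\ref{thm:main theorem}, identifies the kernel of $H_1(\mathcal{H}_g^*;H)\to H_1(\mathcal{M}_g^*;H)$ with the kernel of $H_1(\mathcal{H}_g;H)\to H_1(\mathcal{M}_g;H)$, which is $0$ for $g\ge4$ and $\Z/2\Z$ for $g=2,3$ by Theorem~\ref{thm:main theorem}. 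This gives the second part of the statement.

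\textbf{Main obstacle.} Stage 2 is the crux. One must control both $(L\otimes H)_{\mathcal{H}_g}$ (requiring detailed knowledge of the Siegel-parabolic action, including the contribution of the unipotent radical $\operatorname{Sym}^2 L$) and the image of $d$, which requires information about $H_2(\mathcal{H}_g;H)$ not contained in Theorem~\ref{thm:main theorem}. The decisive input is exhibiting a primitive handlebody crossed homomorphism $\mathcal{H}_g^*\to H$ whose class generates the $\Z$-summand; naturally this is obtained by restriction from Morita's construction, but verifying primitivity for all $g\ge2$ is the technical heart of the argument.
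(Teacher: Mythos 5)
Your Stage 1 is correct and coincides with the paper's reduction from $\mathcal{H}_{g,1}$ to $\mathcal{H}_g^*$, and the general strategy of Stages 2--3 (a Birman-type five-term sequence compared with the corresponding sequence for $\mathcal{M}_g^*$) is also the route the paper takes. However, Stage 2 rests on a wrong identification of the point-pushing subgroup, and the decisive step of the computation is left unproved. The kernel of the forgetful map $\mathcal{H}_g^*\to\mathcal{H}_g$ is $\pi_1\Sigma_g$, not $F_g=\pi_1 H_g$: the marked point lies on $\partial H_g=\Sigma_g$, and the push along \emph{any} loop in $\Sigma_g$ extends over the handlebody. Concretely, the push along a meridian $\alpha_i$ equals $t_{\alpha_i'}t_{\alpha_i''}^{-1}$ for two parallel meridians, hence lies in $\mathcal{H}_g^*$ and is a nontrivial element of the kernel, even though $\alpha_i$ dies in $\pi_1 H_g$. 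So the correct extension is $1\to\pi_1\Sigma_g\to\mathcal{H}_g^*\to\mathcal{H}_g\to1$ (as stated in Section 2 of the paper); the relevant coinvariant module is $(H\otimes H)_{\mathcal{H}_g}$ rather than your $(L\otimes H)_{\mathcal{H}_g}$, and the claimed morphism to the surface Birman sequence does not exist with $F_g$ in place of the kernel (there is no inclusion $F_g\hookrightarrow\pi_1\Sigma_g$ compatible with $\mathcal{H}_g^*\hookrightarrow\mathcal{M}_g^*$; $\pi_1 H_g$ is a quotient of $\pi_1\Sigma_g$, not a subgroup).

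Second, even after this is repaired, you never establish the crucial injectivity of the $\mathbb{Z}$-summand into $H_1(\mathcal{H}_g^*;H)$ (equivalently, the vanishing of the image of your differential $d$): you explicitly defer it to a ``primitivity'' verification for a restricted Morita crossed homomorphism and call it the technical heart, so the proposal as written does not compute the group. The paper avoids this issue entirely. It maps the handlebody five-term sequence to Morita's split short exact sequence $0\to\mathbb{Z}\to H_1(\mathcal{M}_g^*;H)\to H_1(\mathcal{M}_g;H)\to0$, uses Lemma~\ref{lem:coinvariant} to identify the two copies of $\mathbb{Z}$ on the left, and then a snake-lemma chase shows that the kernel and cokernel of $H_1(\mathcal{H}_g^*;H)\to H_1(\mathcal{M}_g^*;H)$ agree with those of $H_1(\mathcal{H}_g;H)\to H_1(\mathcal{M}_g;H)$, which are known from Theorem~\ref{thm:main theorem} and Remark~\ref{rem:surjective-homo}. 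This yields both the isomorphism type of $H_1(\mathcal{H}_g^*;H)$ and the kernel statement simultaneously, with no input about $H_2(\mathcal{H}_g;H)$ or about the image of $d$. You should rebuild Stage 2 around that comparison rather than around a direct primitivity argument.
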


In this paper, we also study relationships between the second homology groups of 
$\mathcal{H}_g$, $\mathcal{H}_g^*$, and $\mathcal{H}_{g,1}$.
The second homology group of $\mathcal{M}_g$ is calculated
by Harer~\cite{harer83} when $g\ge5$.
It contains some minor mistakes and these are corrected in \cite{harer85} later.
For surfaces with an arbitrary number of punctures and boundary components,
see Korkmaz-Stipsicz~\cite{korkmaz03}.
See also Benson-Cohen~\cite{benson91} and Sakasai~\cite{sakasai12} for low genera.
There are some results which imply that
the cohomology group of the handlebody mapping class group $\mathcal{H}_g$
is similar to that of $\mathcal{M}_g$.
Morita~\cite[Proposition~3.1]{morita87} showed that
the rational cohomology group of any subgroup of the mapping class group decomposes into a direct sum.
Later, Kawazumi-Morita~\cite[Proposition~5.2]{kawazumi01} generalized it 
to the cohomology group with coefficients in $A=\mathbb{Z}[1/(2g-2)]$.
In particular, the cohomology group of the handlebody mapping class group with a puncture decomposes as
\[
H^n(\mathcal{H}_g^*;A)\cong H^n(\mathcal{H}_g;A)\oplus H^{n-1}(\mathcal{H}_g;H^1(\Sigma_g;A))\oplus H^{n-2}(\mathcal{H}_g;A).
\]
Hatcher-Wahl~\cite{hatcher10} showed that
the integral cohomology groups of the mapping class groups of 3-manifolds stabilize in more general settings.
Hatcher also announced that the rational stable cohomology group 
coincides with the polynomial ring generated by the even Morita-Mumford classes.
However, as far as we know,
even the second integral homology group of handlebody mapping class groups 
has not been computed yet. 

Here is the outline of our paper:

In Section 2,
we investigate the relationship between the second integral homology group
of the handlebody mapping class group 
fixing a point or a 2-disk in $\Sigma_g$ pointwise 
with that of $\mathcal{H}_g$ using Theorem~\ref{thm:main theorem}.

In Section 3,
we compute the twisted first homology group $H_1(\mathcal{H}_g;H_1(\Sigma_g))$
to prove Theorem~\ref{thm:main theorem} in the case when $g\ge4$.
We also compute the twisted first homology groups of $\mathcal{H}_g$ with coefficients in $\Ker(H_1(\Sigma_g)\to H_1(H_g))$ and $H_1(H_g)$.

Let $\mathcal{L}_g$ denote the kernel of the homomorphism $\mathcal{H}_g\to \Out(\pi_1H_g)$.
The exact sequence
\[
\begin{CD}
1@>>>\mathcal{L}_g@>>>\mathcal{H}_g@>>>\Out(\pi_1H_g)@>>>1
\end{CD}
\]
induces exact sequences between their first homology groups with coefficients in $\Ker(H_1(\Sigma_g)\to H_1(H_g))$ and $H_1(H_g)$.
Luft~\cite{luft78}  showed that the group $\mathcal{L}_g$ coincides with the twist group,
which is generated by Dehn twists along meridian disks.
Satoh~\cite{satoh06} determined the twisted first homology groups $H_1(\Out F_n;H_1(F_n))$ and $H_1(\Out F_n;H^1(F_n))$.
Applying Luft's and Satoh's results to the exact sequences, 
we can determine $H_1(\mathcal{H}_g;H_1(\Sigma_g))$ when $g\ge4$.

In Section 4,
we review a finite presentation of the handlebody mapping class group $\mathcal{H}_g$ 
given by Wajnryb~\cite{wajnryb98}.

In Section 5,
we compute the twisted first homology group $H_1(\mathcal{H}_g;H_1(\Sigma_g))$,
using the Wajnryb's presentation of the handlebody mapping class group $\mathcal{H}_g$ 
to prove Theorem~\ref{thm:main theorem} in the case when $g=2,3$.

In Section 6,
we prove Theorem~\ref{thm:main theorem2} 
and also compute the twisted first homology groups of $\mathcal{H}_g^*$
with coefficients in $\Ker(H_1(\Sigma_g)\to H_1(H_g))$ and $H_1(H_g)$.

\section{On the second homology of the handlebody mapping class groups fixing a point or a 2-disk pointwise}
In this section, we introduce some corollaries of Theorem~\ref{thm:main theorem}
which give relationships between the second homology groups of  $\mathcal{H}_g$, $\mathcal{H}_g^*$ and $\mathcal{H}_{g,1}$.

\begin{figure}[htbp]
\begin{center}
\includegraphics[width=90mm]{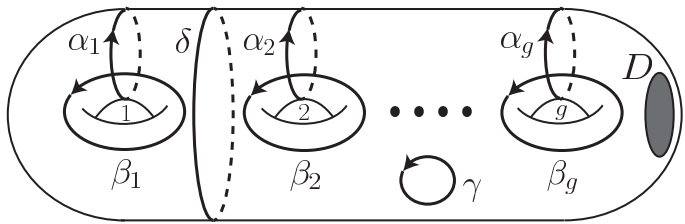}
\end{center}
\caption{a 2-disk $D$ and simple closed curves $\alpha_1,\ldots, \alpha_g, \beta_1,\ldots, \beta_g, \gamma$}
\label{fig:scc1}
\end{figure}
Let $U\Sigma_g$ denote the unit tangent bundle of $\Sigma_g$.
Let $\alpha_1,\ldots, \alpha_g, \beta_1, \ldots, \beta_g$ be oriented smooth simple closed curves as in figure~\ref{fig:scc1},
and denote their homology classes in $H_1(\Sigma_g)$
by $x_1=[\alpha_1], x_2=[\alpha_2], \ldots, x_g=[\alpha_g], y_1=[\beta_1], y_2=[\beta_2], \ldots, y_g=[\beta_g]$.
We also denote by $\gamma$ a null-homotopic smooth simple closed curve in figure~\ref{fig:scc1}.
There are natural liftings of $\alpha_1, \ldots, \alpha_g, \beta_1, \ldots, \beta_g, \gamma$ to $U\Sigma_g$,
and let us denote their homology classes in $H_1(U\Sigma_g)$
by $\tilde{x}_1, \tilde{x}_2, \ldots, \tilde{x}_g$, $\tilde{y}_1, \tilde{y}_2, \ldots, \tilde{y}_g$, $z$, respectively.
For a group $G$ and a $G$-module $M$,
let us denote by $M_{G}$ its coinvariant, that is,
the quotient of $M$ by the submodule spanned by the set $\{gm-m\,|\,m\in M, g\in G\}$.
\begin{lemma}\label{lem:unit-tan}
For $g\ge2$,
\[
H_1(U\Sigma_g)_{\mathcal{H}_g}=0.
\]
\end{lemma}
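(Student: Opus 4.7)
The plan is to analyze the Gysin sequence of the circle bundle $U\Sigma_g\to\Sigma_g$, whose Euler number is $\chi(\Sigma_g)=2-2g$. This yields the short exact sequence of $\mathcal{H}_g$-modules
\[
0\to\Z/(2g-2)\to H_1(U\Sigma_g)\to H_1(\Sigma_g)\to 0,
\]
in which the left-hand subgroup is generated by the fiber class $z$ and carries the trivial action, since any orientation-preserving self-homeomorphism of $\Sigma_g$ preserves fiber orientations. Taking coinvariants, equivalently passing to $H_0$ in the associated long exact sequence of group homology, the lemma reduces to two subclaims: (a) $H_1(\Sigma_g)_{\mathcal{H}_g}=0$, and (b) the connecting map $\partial\colon H_1(\mathcal{H}_g;H_1(\Sigma_g))\to\Z/(2g-2)$ is surjective.

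For claim (a) I would use the $\mathcal{H}_g$-invariant Lagrangian filtration
\[
0\to \Ker(H_1(\Sigma_g)\to H_1(H_g))\to H_1(\Sigma_g)\to H_1(H_g)\to 0,
\]
in which the kernel is spanned by the meridian classes $x_1,\ldots,x_g$. Composing the natural surjection $\mathcal{H}_g\twoheadrightarrow\Out(\pi_1 H_g)=\Out(F_g)$ with the abelianization map $\Out(F_g)\twoheadrightarrow\GL_g(\Z)$, the induced action of $\mathcal{H}_g$ on $H_1(H_g)\cong\Z^g$ factors through a surjection onto $\GL_g(\Z)$, and the action on the kernel is its contragredient. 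For $g\geq 2$, elementary transvections in $\GL_g(\Z)$ realize each basis vector as a difference in the coinvariant submodule, so both $H_1(H_g)_{\mathcal{H}_g}$ and the coinvariants of the kernel vanish; right-exactness of $(-)_{\mathcal{H}_g}$ then yields $H_1(\Sigma_g)_{\mathcal{H}_g}=0$.

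Claim (b) is the main obstacle. By Theorem~\ref{thm:main theorem}, $H_1(\mathcal{H}_g;H_1(\Sigma_g))$ contains a cyclic direct summand of order $2g-2$, whose generator is (the contraction of) Morita's extended first Johnson crossed homomorphism~\cite{morita93}. To establish surjectivity of $\partial$, the plan is to identify the extension class of the above Gysin sequence with the classical Chillingworth cocycle, and then to check, on Morita's generator, that its image under $\partial$ is indeed a generator of $\Z/(2g-2)$. The hardest step is the tangent-framing bookkeeping: meridian Dehn twists satisfy $(T_{\alpha_i})_*(\tilde y_i)-\tilde y_i=\tilde x_i$ with no $z$-correction (a local torus-chart computation), so the test element producing a generator of $\Z/(2g-2)$ must involve a handle-slide or longitude-slide mapping class in $\mathcal{H}_g$ whose framing correction supplies the required unit in $\Z/(2g-2)$.
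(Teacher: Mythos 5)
Your reduction via the Gysin sequence is structurally sound, and claim (a) is fine (the paper gets $H_{\mathcal{H}_g}=0$ as a byproduct of the lemma itself, but your argument through the surjection $\mathcal{H}_g\to\GL_g(\Z)$ and its contragredient on the meridian kernel works for $g\ge2$). The problem is claim (b), and it is twofold. First, there is a circularity: you invoke Theorem~\ref{thm:main theorem} to produce the cyclic summand of order $2g-2$, but in the paper the lower bound $|H_1(\mathcal{H}_g;H_1(\Sigma_g))|\ge 2g-2$ for $g\ge4$ is \emph{derived from} this very lemma, via Lemma~\ref{lem:morita-homo} and Remark~\ref{rem:surjective-homo} (the hypothesis there --- injectivity of $H_1(U\Sigma_g)_{\mathcal{H}_g}\to H_{\mathcal{H}_g}$ --- is supplied exactly by Lemma~\ref{lem:unit-tan}). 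So Theorem~\ref{thm:main theorem} is not available to you here. Second, even granting the existence of the summand, surjectivity of $\partial$ is equivalent to the vanishing of the fiber class $z$ in $H_1(U\Sigma_g)_{\mathcal{H}_g}$, and that is precisely the content you defer: you correctly observe that meridian twists give $t_{\alpha_i}(\tilde y_i)-\tilde y_i=\tilde x_i$ with no $z$-correction, and that some handle-slide must supply the missing unit, but you never exhibit such an element or compute its framing correction. That computation is the entire substance of the lemma.

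The paper does this directly and without any cohomological machinery: after killing $\tilde x_1,\ldots,\tilde x_g$ with the twists $t_{\alpha_i}$, it introduces the handlebody mapping classes $h_i=t_{\delta_i'}^{-1}t_{\beta_i}t_{\alpha_{i+1}}$ and computes $h_i(\tilde x_i)=\tilde x_i-\tilde x_{i+1}-z$ and $h_i(\tilde y_{i+1})=\tilde y_i+\tilde y_{i+1}-z$ in $H_1(U\Sigma_g)$; the first relation forces $z=0$ in the coinvariants (since the $\tilde x_j$ already vanish there), the second then kills $\tilde y_1,\ldots,\tilde y_{g-1}$, and a $180$-degree rotation handles $\tilde y_g$. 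To repair your argument you would need to carry out an equivalent explicit computation for a concrete element of $\mathcal{H}_g$; identifying the extension class with the Chillingworth cocycle does not by itself certify that the restriction of Morita's class to $\mathcal{H}_g$ pairs onto a generator of $\Z/(2g-2)$.
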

\begin{proof}
For a simple closed curve $c$ in $\Sigma_g$,
we denote by $t_c$ the Dehn twist along $c$.
As in \cite[Theorem~1B]{johnson80}, 
we have $t_{\alpha_i}(\tilde{y}_i)=\tilde{y}_i+\tilde{x}_i$ for $i=1,\ldots,g$.
Note that our $\tilde{c}$ is denoted by $\vec{c}$ in \cite{johnson80},
and is different from $\tilde{c}$.
Hence, we have $\tilde{x}_1=\cdots=\tilde{x}_g=0\in H_1(U\Sigma_g)_{\mathcal{H}_g}$.
Let $\delta'_i$ and $\alpha'_i$ be simple closed curves
as depicted in Figure~\ref{fig:scc2} for $1\le i\le g-1$.
Let us denote $h_i=t_{\delta_i'}^{-1}t_{\beta_i}t_{\alpha_{i+1}}\in\mathcal{M}_g$.
Since $h_i(\alpha_l)=\alpha_l$ when $l\ne i$, and $h_i(\alpha_i)=\alpha'_i$,
the mapping class $h_i$ is actually an element of the handlebody mapping class group $\mathcal{H}_g$.
\begin{figure}[htbp]
\begin{center}
\includegraphics[width=100mm]{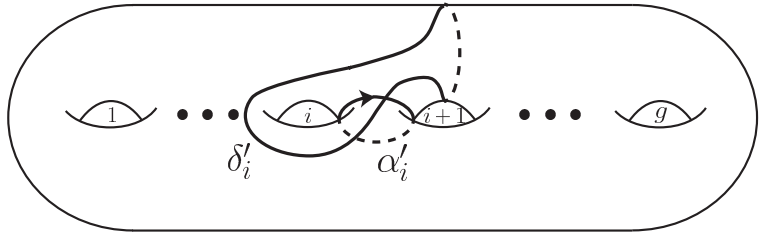}
\end{center}
\caption{simple closed curves $\delta'$ and $\alpha'_i$}
\label{fig:scc2}
\end{figure}
We obtain
\[
h_i(\tilde{x}_i)=\tilde{x}_i-\tilde{x}_{i+1}-z \text{ and }
h_i(\tilde{y}_{i+1})=\tilde{y}_i+\tilde{y}_{i+1}-z,
\]
for $i=1,\ldots, g-1$. 
Thus we have
$z=\tilde{y}_1=\cdots=\tilde{y}_{g-1}=0\in H_1(U\Sigma_g)_{\mathcal{H}_g}$.
Since the rotation $r\in \mathcal{H}_g$ of the surface $\Sigma_g$
about a vertical line by $180$~degrees maps $\tilde{y}_g$ to $-\tilde{y}_1$,
we also obtain $\tilde{y}_g=0$.
\end{proof}
Using the mapping classes $h_i$ and $r$,
we can also show:
\begin{lemma}\label{lem:H_0-L}
For $g\ge2$,
\[
\Ker(H_1(\Sigma_g)\to H_1(H_g))_{\mathcal{H}_g}=0.
\]
\end{lemma}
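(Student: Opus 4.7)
The plan is to mimic the proof of Lemma~\ref{lem:unit-tan}. First, I would identify the kernel $L := \Ker(H_1(\Sigma_g)\to H_1(H_g))$ as the subgroup of $H_1(\Sigma_g)$ generated by the meridian classes $x_1,\ldots, x_g$: the $\beta_i$ project to a basis of $H_1(H_g)\cong\Z^g$, while the $\alpha_i$ bound meridian disks in $H_g$. Since $\mathcal{H}_g$ preserves $L$, it suffices to show that each $x_i$ is trivial in $L_{\mathcal{H}_g}$.

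The key observation is that the bundle projection $U\Sigma_g\to\Sigma_g$ is $\mathcal{H}_g$-equivariant and induces a homomorphism $H_1(U\Sigma_g)\to H_1(\Sigma_g)$ sending $\tilde{x}_i\mapsto x_i$, $\tilde{y}_i\mapsto y_i$, and $z\mapsto 0$. Projecting the identity $h_i(\tilde{x}_i) = \tilde{x}_i - \tilde{x}_{i+1} - z$ established in the proof of Lemma~\ref{lem:unit-tan}, where $h_i = t_{\delta_i'}^{-1}t_{\beta_i}t_{\alpha_{i+1}}\in\mathcal{H}_g$, I obtain
\[
h_i(x_i) - x_i = -x_{i+1}
\]
for $i=1,\ldots, g-1$. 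Hence $x_{i+1}=0$ in $L_{\mathcal{H}_g}$ for all such $i$, so that $x_2=\cdots=x_g=0$. To eliminate $x_1$, I would invoke the vertical $180$-degree rotation $r\in\mathcal{H}_g$ of Lemma~\ref{lem:unit-tan}; by the same symmetry that yields $r(\tilde{y}_g) = -\tilde{y}_1$, one sees that $r(x_1) = \pm x_g$, and consequently $x_1$ differs from an already-trivialised class by an element of the form $rv-v$, so $x_1=0$ in $L_{\mathcal{H}_g}$ as well.

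The argument is essentially a direct projection of the previous proof from $U\Sigma_g$ down to $\Sigma_g$, so I do not anticipate a serious obstacle. The only items requiring care are the $\mathcal{H}_g$-equivariance of $U\Sigma_g\to\Sigma_g$ and the explicit formulas for the actions of $h_i$ and $r$ on the $x_j$. Both are immediate: the first because $\mathcal{H}_g$ acts through $\mathcal{M}_g$ by diffeomorphisms, which lift canonically to the unit tangent bundle; the second either by pushing forward the computations already done in the proof of Lemma~\ref{lem:unit-tan}, or by applying the Picard-Lefschetz formula $t_c(v) = v+\langle v,[c]\rangle[c]$ to each factor of $h_i$ and using the visible symmetry of the configuration in Figure~\ref{fig:scc1} to handle $r$.
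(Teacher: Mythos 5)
Your argument is correct and is exactly the paper's intended proof: the paper disposes of this lemma with the one-line remark that the mapping classes $h_i$ and $r$ from Lemma~\ref{lem:unit-tan} also handle it, and that is precisely what you carry out, taking care (correctly) to use only relations $gm-m$ with $m\in L$ itself rather than the relation $t_{\alpha_i}(y_i)-y_i=x_i$, which is unavailable here since $y_i\notin L$. Projecting $h_i(\tilde{x}_i)=\tilde{x}_i-\tilde{x}_{i+1}-z$ along $H_1(U\Sigma_g)\to H_1(\Sigma_g)$ (which kills $z$) gives $h_i(x_i)-x_i=-x_{i+1}$ and hence $x_2=\cdots=x_g=0$, and $r(x_1)-x_1=\pm x_g-x_1$ then kills $x_1$, as you say.
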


\begin{prop}\label{prop:pointed}
When $g\ge4$,
\[
H_2(\mathcal{H}_g^*)\cong H_2(\mathcal{H}_g)\oplus\mathbb{Z}.
\]
\end{prop}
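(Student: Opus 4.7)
The plan is to apply the Lyndon--Hochschild--Serre spectral sequence of the Birman exact sequence
\[
1\longrightarrow \pi_1(\Sigma_g,\ast)\longrightarrow \mathcal{H}_g^*\longrightarrow \mathcal{H}_g\longrightarrow 1,
\]
which has $E^2_{p,q}=H_p(\mathcal{H}_g;H_q(\Sigma_g))\Rightarrow H_{p+q}(\mathcal{H}_g^*)$. First I would identify the needed $E^2$-entries. Since $\mathcal{H}_g$ acts by orientation-preserving maps, $E^2_{0,2}=H_2(\Sigma_g)=\mathbb{Z}$. Combining Lemma~\ref{lem:H_0-L} with the right-exact sequence of coinvariants for $0\to \Ker(H_1(\Sigma_g)\to H_1(H_g))\to H_1(\Sigma_g)\to H_1(H_g)\to 0$ together with the vanishing of the $\mathcal{H}_g$-coinvariants of $H_1(H_g)=\mathbb{Z}^g$ (which follows from the surjection $\mathcal{H}_g\twoheadrightarrow \GL(g,\mathbb{Z})$) yields $E^2_{0,1}=H_1(\Sigma_g)_{\mathcal{H}_g}=0$. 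Theorem~\ref{thm:main theorem} gives $E^2_{1,1}=\mathbb{Z}/(2g-2)$ for $g\ge 4$, and $E^2_{2,0}=H_2(\mathcal{H}_g)$. The vanishing of $E^2_{0,1}$ forbids any differential out of $E^2_{2,0}$, so $E^\infty_{2,0}=H_2(\mathcal{H}_g)$ and the edge map $H_2(\mathcal{H}_g^*)\twoheadrightarrow H_2(\mathcal{H}_g)$ is surjective with kernel $F_1$ that is an extension of $E^\infty_{1,1}$ by $E^\infty_{0,2}$.

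Next I would establish $E^\infty_{0,2}=\mathbb{Z}$ by invoking the Euler class. The fibration $\Sigma_g\to B\mathcal{H}_g^*\to B\mathcal{H}_g$ is the pullback of the universal surface bundle over $B\mathcal{M}_g$, hence its vertical tangent bundle has an Euler class $e\in H^2(\mathcal{H}_g^*;\mathbb{Z})$ satisfying $\langle e,i_*[\Sigma_g]\rangle =\chi(\Sigma_g)=2-2g\ne 0$. The fiber class $i_*[\Sigma_g]$ is therefore of infinite order in $H_2(\mathcal{H}_g^*)$, and since $E^\infty_{0,2}$ coincides with the image of the fiber inclusion $H_2(\Sigma_g)\to H_2(\mathcal{H}_g^*)$, we obtain $E^\infty_{0,2}=\mathbb{Z}$.

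The main obstacle is twofold: showing $E^\infty_{1,1}=0$, equivalently that the differential $d^2\colon H_3(\mathcal{H}_g)\to \mathbb{Z}/(2g-2)$ is surjective, and then splitting the extension $0\to \mathbb{Z}\to H_2(\mathcal{H}_g^*)\to H_2(\mathcal{H}_g)\to 0$. For the surjectivity I would compare the LHS spectral sequences via the inclusion of Birman sequences of $\mathcal{H}$ and $\mathcal{M}$: by Theorem~\ref{thm:main theorem}, the induced map $E^2_{1,1}(\mathcal{H})\to E^2_{1,1}(\mathcal{M})$ is an isomorphism for $g\ge 4$, and the classical identity $H_2(\mathcal{M}_g^*)\cong H_2(\mathcal{M}_g)\oplus\mathbb{Z}$ forces the corresponding $d^2$ for $\mathcal{M}$ to be surjective, so it suffices to lift generating $3$-cycles into $\mathcal{H}_g$, for instance using products of commuting Dehn twists along meridian disks in $H_g$. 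For the splitting, I would construct a retraction $H_2(\mathcal{H}_g^*)\to\mathbb{Z}$ by modifying the Euler class with a pullback from $H^2(\mathcal{H}_g)$; the existence of such an integral modification is furnished by Theorem~\ref{thm:main theorem2}, whose integer-valued crossed homomorphism on $\mathcal{H}_g^*$ is primitive on the fiber and supplies the required retraction.
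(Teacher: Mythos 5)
Your setup coincides with the paper's: the Lyndon--Hochschild--Serre spectral sequence of the Birman sequence, the vanishing of $E^2_{0,1}$, the identification $E^2_{1,1}\cong\mathbb{Z}/(2g-2)\mathbb{Z}$, and the Euler-class argument showing the fiber class has infinite order (hence $E^\infty_{0,2}\cong\mathbb{Z}$) are all sound. The genuine error is the claim that $E^\infty_{1,1}=0$, i.e.\ that $d^2\colon E^2_{3,0}=H_3(\mathcal{H}_g)\to\mathbb{Z}/(2g-2)\mathbb{Z}$ is surjective. This is false, and your justification for the analogous statement for $\mathcal{M}_g$ does not work: the isomorphism $H_2(\mathcal{M}_g^*)\cong H_2(\mathcal{M}_g)\oplus\mathbb{Z}$ only says that $F_1=\Ker(H_2(\mathcal{M}_g^*)\to H_2(\mathcal{M}_g))$ is infinite cyclic, and $F_1$ sits in an extension $0\to E^\infty_{0,2}\to F_1\to E^\infty_{1,1}\to 0$; having $F_1\cong\mathbb{Z}$ is perfectly compatible with $E^\infty_{0,2}\cong\mathbb{Z}$ being a \emph{finite-index} subgroup of $F_1$. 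That is exactly what happens: Harer's homomorphism $S_1\colon H_2(\mathcal{M}_g^*)\to\mathbb{Z}$ restricts to an isomorphism on $F_1$ but sends the fiber class $[\Sigma_g]$ to $(2g-2)$ times a generator, so the fiber class spans an index-$(2g-2)$ subgroup of $F_1$, the full group $E^\infty_{1,1}\cong\mathbb{Z}/(2g-2)\mathbb{Z}$ survives, and $d^2$ on $E^2_{3,0}$ is zero. Your plan to ``lift generating $3$-cycles'' hitting $E^2_{1,1}$ is therefore chasing classes that cannot exist.

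The downstream consequence is that your Euler-class computation alone does not determine $F_1$ for $\mathcal{H}_g^*$: knowing $E^\infty_{0,2}\cong\mathbb{Z}$ and (correctly) $E^\infty_{1,1}\cong\mathbb{Z}/(2g-2)\mathbb{Z}$ still leaves the extension ambiguous --- a priori $F_1$ could be $\mathbb{Z}\oplus\mathbb{Z}/(2g-2)\mathbb{Z}$. The paper resolves this by mapping the exact sequence $0\to\bar E^\infty_{0,2}\to F_1\to\bar E^\infty_{1,1}\to0$ to the corresponding sequence for $\mathcal{M}_g^*$, using Theorem~\ref{thm:main theorem} to see that $\bar E^\infty_{1,1}\to E^\infty_{1,1}$ is an isomorphism for $g\ge4$ and the Korkmaz--Stipsicz/Harer input to identify the bottom middle term with $\mathbb{Z}$; the five lemma then gives $F_1\cong\mathbb{Z}$, and the splitting of $0\to\mathbb{Z}\to H_2(\mathcal{H}_g^*)\to H_2(\mathcal{H}_g)\to0$ is obtained by restricting the known splitting of the corresponding sequence for $\mathcal{M}_g^*$. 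Your proposed splitting via a ``primitive modification of the Euler class'' supplied by Theorem~\ref{thm:main theorem2} is not substantiated as stated: the Euler class evaluates to $2-2g$, not $\pm1$, on the fiber, and a class in $H_1(\mathcal{H}_g^*;H_1(\Sigma_g))$ does not by itself produce the required integral degree-two retraction. Comparison with $S_1$ is the clean way to both compute $F_1$ and split the extension.
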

\begin{proof}
Let us denote the Lyndon-Hochschild-Serre spectral sequences of the forgetful exact sequences
\[
\begin{CD}
1@>>>\pi_1\Sigma_g@>>> \mathcal{M}_g^* @>>>\mathcal{M}_g@>>>1,\\
1@>>>\pi_1\Sigma_g@>>> \mathcal{H}_g^* @>>>\mathcal{H}_g@>>>1
\end{CD}
\]
by $\{E^r_{p,q}\}$ and $\{\bar{E}^r_{p,q}\}$, respectively.
By Lemma~\ref{lem:unit-tan},
we have $H_1(\Sigma_g)_{\mathcal{M}_g}=H_1(\Sigma_g)_{\mathcal{H}_g}=0$.
Thus the $E^{\infty}$ terms of both spectral sequences are as follows.
\[
\begin{array}{|c|c|c|}
\hline
E_{0,2}^\infty&*&*\\
\hline
0&E_{1,1}^{\infty}&*\\
\hline
\mathbb{Z}&H_1(\mathcal{M}_g)&H_2(\mathcal{M}_g)\\
\hline
\end{array}\quad\quad
\begin{array}{|c|c|c|}
\hline
\bar{E}_{0,2}^\infty&*&*\\
\hline
0&\bar{E}_{1,1}^{\infty}&*\\
\hline
\mathbb{Z}&H_1(\mathcal{H}_g)&H_2(\mathcal{H}_g)\\
\hline
\end{array}
\]
Therefore, we have a morphism of exact sequences
\[
\begin{CD}
0@>>>\bar{E}_{0,2}^\infty@>>> \Ker(H_2(\mathcal{H}_g^*)\to H_2(\mathcal{H}_g))@>>>\bar{E}_{1,1}^{\infty}@>>>0\\
@.@VVV@VVV@VVV@.\\
0@>>>E_{0,2}^\infty@>>> \Ker(H_2(\mathcal{M}_g^*)\to H_2(\mathcal{M}_g))@>>>E_{1,1}^{\infty}@>>>0
\end{CD}
\]
induced by the inclusion $\mathcal{H}_g^*\to\mathcal{M}_g^*$.
As explained in \cite[Propositions~1.4 and 1.5]{korkmaz03},
$\Ker(H_2(\mathcal{M}_g^*)\to H_2(\mathcal{M}_g))\cong\mathbb{Z}$ 
and $E_{0,2}^\infty=E_{0,2}^2\cong\mathbb{Z}$ when $g\ge4$.
It is also true when $g=3$ as in \cite[Corollary~4.9]{sakasai12} (see also \cite{pitsch13}).
Moreover, there exists a surjective homomorphism 
$S_1:H_2(\mathcal{M}_g^*)\to \mathbb{Z}$ defined in \cite[Section~0]{harer83}
which maps the fundamental class $[\Sigma_g]\in H_2(\Sigma_g)=E_{0,2}^\infty$ to $(2g-2)$-times a generator
and whose restriction to $\Ker(H_2(\mathcal{M}_g^*)\to H_2(\mathcal{M}_g))$ is surjective.
These facts show that $E_{1,1}^\infty$ is a cyclic group of order $2g-2$.
Since Morita \cite{morita89} showed $E^2_{1,1}=H_1(\mathcal{M}_g;H_1(\Sigma_g))\cong\mathbb{Z}/(2g-2)\mathbb{Z}$ when $g\ge2$,
we obtain $E_{1,1}^2=E_{1,1}^\infty$.

When $g\ge4$, 
this fact and the isomorphism 
$H_1(\mathcal{H}_g;H_1(\Sigma_g))\cong H_1(\mathcal{M}_g;H_1(\Sigma_g))$
show that in the commutative diagram
\[
\begin{CD}
\bar{E}^2_{1,1}@>>>\bar{E}^\infty_{1,1}\\
@VVV@VVV\\
E^2_{1,1}@>>>E^\infty_{1,1}\\
\end{CD}
\]
we have an isomorphism $\bar{E}_{1,1}^\infty\cong E_{1,1}^\infty$.
As a conclusion,
we obtain 
\[
\Ker(H_2(\mathcal{H}_g^*)\to H_2(\mathcal{H}_g))\cong\Ker(H_2(\mathcal{M}_g^*)\to H_2(\mathcal{M}_g))\cong\mathbb{Z}.
\]
Consider the commutative diagram
\[
\begin{CD}
0@>>>\mathbb{Z}@>>>H_2(\mathcal{H}_g^*)@>>>H_2(\mathcal{H}_g)@>>>0,\\
@.@|@VVV@VVV@.\\
0@>>>\mathbb{Z}@>>>H_2(\mathcal{M}_g^*)@>>>H_2(\mathcal{M}_g)@>>>0.
\end{CD}
\]
Since the lower exact sequence splits,
we obtain $H_2(\mathcal{H}_g^*)\cong H_2(\mathcal{H}_g)\oplus\mathbb{Z}$.
\end{proof}

When $g\ge2$, Lemma~\ref{lem:unit-tan} and the five term exact sequences
induced by the exact sequences
\[
\begin{CD}
1@>>>\pi_1\Sigma_g@>>> \mathcal{H}_g^* @>>>\mathcal{H}_g@>>>1,\\
1@>>>\pi_1U\Sigma_g@>>>\mathcal{H}_{g,1}@>>>\mathcal{H}_g@>>>1
\end{CD}
\]
imply:
\begin{lemma}\label{lem:abel-pointed}
When $g\ge2$,
\[
H_1(\mathcal{H}_{g,1})\cong H_1(\mathcal{H}_g^*)\cong H_1(\mathcal{H}_g).
\]
\end{lemma}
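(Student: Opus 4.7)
The plan is to apply the five-term exact sequence in group homology to each of the two displayed extensions and observe that in both cases the leftmost coinvariant term vanishes, so that the map to $H_1(\mathcal{H}_g)$ is forced to be an isomorphism.

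Recall that for any extension $1 \to N \to G \to Q \to 1$ one has the five-term exact sequence
\[
H_2(G) \to H_2(Q) \to H_1(N)_Q \to H_1(G) \to H_1(Q) \to 0.
\]
Applied to the two extensions
\[
1 \to \pi_1\Sigma_g \to \mathcal{H}_g^* \to \mathcal{H}_g \to 1, \qquad 1 \to \pi_1 U\Sigma_g \to \mathcal{H}_{g,1} \to \mathcal{H}_g \to 1,
\]
and using that $\Sigma_g$ and $U\Sigma_g$ are aspherical (so $H_1(\pi_1\Sigma_g) = H_1(\Sigma_g)$ and $H_1(\pi_1 U\Sigma_g) = H_1(U\Sigma_g)$), the relevant ends read
\[
H_1(\Sigma_g)_{\mathcal{H}_g} \to H_1(\mathcal{H}_g^*) \to H_1(\mathcal{H}_g) \to 0,
\]
\[
H_1(U\Sigma_g)_{\mathcal{H}_g} \to H_1(\mathcal{H}_{g,1}) \to H_1(\mathcal{H}_g) \to 0.
\]

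The first step is to kill both coinvariant terms. By Lemma~\ref{lem:unit-tan} we already have $H_1(U\Sigma_g)_{\mathcal{H}_g} = 0$. For the other, note that the bundle projection $U\Sigma_g \to \Sigma_g$ is $\mathcal{H}_g$-equivariant and induces a surjection $H_1(U\Sigma_g) \twoheadrightarrow H_1(\Sigma_g)$; passing to coinvariants (a right exact functor) gives a surjection $H_1(U\Sigma_g)_{\mathcal{H}_g} \twoheadrightarrow H_1(\Sigma_g)_{\mathcal{H}_g}$, so the latter also vanishes.

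With both coinvariants equal to zero, each of the five-term sequences collapses to an isomorphism $H_1(\mathcal{H}_g^*) \cong H_1(\mathcal{H}_g)$ and $H_1(\mathcal{H}_{g,1}) \cong H_1(\mathcal{H}_g)$, which is exactly the assertion. There is no real obstacle here; the only subtlety is recognizing that Lemma~\ref{lem:unit-tan} is strictly stronger than what one would need for the $\mathcal{H}_g^*$ statement alone, and that it handles the $\mathcal{H}_{g,1}$ case directly because the fiber of $\mathcal{H}_{g,1} \to \mathcal{H}_g$ is $\pi_1 U\Sigma_g$ rather than $\pi_1\Sigma_g$.
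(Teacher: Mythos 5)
Your proof is correct and follows exactly the route the paper intends: the paper derives this lemma from Lemma~\ref{lem:unit-tan} together with the five-term exact sequences of the two displayed extensions, which is precisely your argument (including the observation that $H_1(\Sigma_g)_{\mathcal{H}_g}=0$ follows from $H_1(U\Sigma_g)_{\mathcal{H}_g}=0$ via the surjection $H_1(U\Sigma_g)\to H_1(\Sigma_g)$). No discrepancies to report.
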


\begin{remark}\label{rem:abelianization}
By the Wajnryb's presentation which we review in Section~\ref{subsection:wajnryb},
we can compute the abelianization as follows:
\[
H_1(\mathcal{H}_g)\cong
\begin{cases}
\mathbb{Z}\oplus\mathbb{Z}/2\Z &\text{if }g=1,\\
\mathbb{Z}\oplus (\mathbb{Z}/2\Z )^2&\text{if }g=2,\\
\mathbb{Z}/2\Z &\text{if }g\ge3.
\end{cases}
\]
We can also see that it is generated by $s_1=t_{\beta_1}t_{\alpha_1}^2t_{\beta_1}$ when $g\ge 3$.
Note that Wajnryb made a mistake in his calculation of the abelianization in \cite[Theorem~20]{wajnryb98} when $g=2$.
\end{remark}

In the following, 
we choose a 2-disk $D$ in the boundary $\Sigma_g$
so that it is disjoint from the simple closed curves
$\alpha_1,\ldots, \alpha_g, \beta_1, \ldots, \beta_g$
as in Figure \ref{fig:scc1} 
and pick a point $*$ in $\Int D$.
\begin{lemma}\label{lem:boundary}
When $g\ge3$,
\[
H_2(\mathcal{H}_g^*)\cong H_2(\mathcal{H}_{g,1})\oplus \mathbb{Z}.
\]
\end{lemma}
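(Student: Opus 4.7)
The plan is to analyze the central extension
$$1 \to \langle t_\gamma \rangle \to \mathcal{H}_{g,1} \to \mathcal{H}_g^* \to 1,$$
where $\gamma = \partial D$ and $\langle t_\gamma \rangle \cong \mathbb{Z}$ is central in $\mathcal{H}_{g,1}$: every element of $\mathcal{H}_{g,1}$ fixes $\gamma$ pointwise, hence commutes with $t_\gamma$. Applying the Lyndon-Hochschild-Serre spectral sequence to this extension and using Lemma~\ref{lem:abel-pointed} (the isomorphism $H_1(\mathcal{H}_{g,1}) \cong H_1(\mathcal{H}_g^*)$) in the five-term exact sequence, I immediately obtain the exact sequence
$$H_2(\mathcal{H}_{g,1}) \to H_2(\mathcal{H}_g^*) \to \mathbb{Z} \to 0.$$
Since $\mathbb{Z}$ is projective, once injectivity of the first map is proved, the resulting short exact sequence splits and the lemma follows.

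The main remaining step is therefore to prove that $H_2(\mathcal{H}_{g,1}) \to H_2(\mathcal{H}_g^*)$ is injective. Equivalently, the spectral-sequence differential $d^2: H_3(\mathcal{H}_g^*) \to H_1(\mathcal{H}_g^*)$, which is given by cap product with the Euler class $e \in H^2(\mathcal{H}_g^*; \mathbb{Z})$ of the central extension, must be surjective. By Lemma~\ref{lem:abel-pointed} and Remark~\ref{rem:abelianization}, $H_1(\mathcal{H}_g^*) \cong H_1(\mathcal{H}_g) \cong \mathbb{Z}/2\mathbb{Z}$ for $g \ge 3$, so the task reduces to exhibiting a 3-class in $\mathcal{H}_g^*$ whose image under $\cap e$ is the nonzero element of $\mathbb{Z}/2\mathbb{Z}$.

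The main obstacle is verifying this surjectivity of $d^2$. I would approach it by comparison with the analogous central extension $1 \to \langle t_\gamma \rangle \to \mathcal{M}_{g,1} \to \mathcal{M}_g^* \to 1$ for the full mapping class group: since $H_1(\mathcal{M}_g^*) = 0$ for $g \ge 3$, the corresponding differential is vacuously surjective, giving a split short exact sequence $0 \to H_2(\mathcal{M}_{g,1}) \to H_2(\mathcal{M}_g^*) \to \mathbb{Z} \to 0$. A diagram chase, combined with information on the maps $H_2(\mathcal{H}_{g,1}) \to H_2(\mathcal{M}_{g,1})$ and $H_2(\mathcal{H}_g^*) \to H_2(\mathcal{M}_g^*)$ (accessible via Proposition~\ref{prop:pointed} and a parallel spectral-sequence analysis of the forgetful extension $1 \to \pi_1 U\Sigma_g \to \mathcal{H}_{g,1} \to \mathcal{H}_g \to 1$ using Lemma~\ref{lem:unit-tan}), should reduce the problem to the $\mathcal{M}$ case. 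For $g = 3$, where Proposition~\ref{prop:pointed} is not directly available, an explicit construction from Wajnryb's presentation (Section~\ref{subsection:wajnryb}) may be required to produce the needed 3-class.
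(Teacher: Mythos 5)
Your reduction is correct as far as it goes, and it matches the paper's setup: the extension $0\to\mathbb{Z}\to\mathcal{H}_{g,1}\to\mathcal{H}_g^*\to1$ is central, the five-term (equivalently Gysin) sequence together with the isomorphism $H_1(\mathcal{H}_{g,1})\cong H_1(\mathcal{H}_g^*)$ of Lemma~\ref{lem:abel-pointed} yields exactness of $H_2(\mathcal{H}_{g,1})\to H_2(\mathcal{H}_g^*)\to\mathbb{Z}\to0$, and freeness of $\mathbb{Z}$ gives the splitting once injectivity on the left is known. Your translation of that injectivity into surjectivity of $d^2=\,\cdot\,\cap e:H_3(\mathcal{H}_g^*)\to H_1(\mathcal{H}_g^*)\cong\mathbb{Z}/2\mathbb{Z}$ is also correct. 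But the step you leave open is the entire content of the lemma, and the strategy you sketch for it does not close. Comparing with $1\to\mathbb{Z}\to\mathcal{M}_{g,1}\to\mathcal{M}_g^*\to1$ only tells you that the composite $H_1(\mathcal{H}_g^*)\xrightarrow{\pi^!}H_2(\mathcal{H}_{g,1})\to H_2(\mathcal{M}_{g,1})$ vanishes (since $H_1(\mathcal{M}_g^*)=0$); to deduce that $\pi^!$ itself vanishes you would need $H_2(\mathcal{H}_{g,1})\to H_2(\mathcal{M}_{g,1})$ to be injective, which is not available --- the paper stresses that even $H_2(\mathcal{H}_g)$ is unknown, and Section 2 exists precisely because these groups cannot be read off from the surface case. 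The cap-product diagram likewise degenerates, since its target in the $\mathcal{M}$-row is the zero group and gives no constraint on the $\mathcal{H}$-row.

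The missing idea is a chain-level disjoint-support argument. By Lemma~\ref{lem:abel-pointed} and Remark~\ref{rem:abelianization}, $H_1(\mathcal{H}_g^*)\cong\mathbb{Z}/2\mathbb{Z}$ is generated by $s_1=t_{\beta_1}t_{\alpha_1}^2t_{\beta_1}$, and $\pi^!([h])$ is represented by the cycle $[\tilde h\,|\,t_{\partial D}]-[t_{\partial D}\,|\,\tilde h]$ in the bar complex. Choose a representative of $s_1$ supported in a genus-one subsurface $S\subset\Sigma_g-\Int D$; for $g\ge3$ the complement $(\Sigma_g-\Int D)-S$ has genus at least two, so the lantern relation produces a $2$-chain bounding $[t_{\partial D}]$ whose support is disjoint from $S$. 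Because the two supports are disjoint, all the group elements involved commute and one writes down an explicit $3$-chain bounding $[\tilde s_1\,|\,t_{\partial D}]-[t_{\partial D}\,|\,\tilde s_1]$. Hence $\pi^!:H_1(\mathcal{H}_g^*)\to H_2(\mathcal{H}_{g,1})$ is zero, which is the injectivity you need. Without this (or some substitute for it), your argument establishes only a right-exact sequence, not the direct sum decomposition.
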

\begin{proof}
Let $\pi:\mathcal{H}_{g,1}\to\mathcal{H}_g^*$ denote the forgetful map.
The Gysin exact sequence of the central extension
\[
\begin{CD}
0@>>>\mathbb{Z}@>>> \mathcal{H}_{g,1} @>\pi>>\mathcal{H}_g^*@>>>1.
\end{CD}
\]
is written as
\[
H_1(\mathcal{H}_g^*)\xrightarrow{\pi^!} H_2(\mathcal{H}_{g,1})\to H_2(\mathcal{H}_g^*)\to \mathbb{Z}\xrightarrow{\pi^!}  H_1(\mathcal{H}_{g,1}).
\]
Recall that the Gysin homomorphism $\pi^!: H_1(\mathcal{H}_g^*)\to H_2(\mathcal{H}_{g,1})$
maps $[h]$ to $[\tilde{h}|t_{\partial D}]-[t_{\partial D}|\tilde{h}]$ in the bar resolution for $g\in\mathcal{H}_g^*$,
where $\tilde{h}\in \mathcal{H}_{g,1}$ is the inverse image of $h$ under $\pi$.
By Lemma~\ref{lem:abel-pointed} and \cite[Theorem~20]{wajnryb98},
$H_1(\mathcal{H}_g^*)$ is the cyclic group of order $2$ generated by $s_1$ when $g\ge3$.
Note that we can choose a representating diffeomorphism of $s_1$
whose support is in a genus 1 subsurface $S$ of $\Sigma_g-\Int D$.
Moreover, using the lantern relation,
we can obtain a 2-chain which bounds $[t_{\partial D}]\in C_1(\mathcal{H}_{g,1})$ whose support is in $(\Sigma_g-\Int D)-S$.
Thus there exists a 3-chain which bounds $[\tilde{s}_1|t_{\partial D}]-[t_{\partial D}|\tilde{s}_1]\in C_2(\mathcal{H}_{g,1})$,
where $\tilde{s}_1=t_{\beta_1}t_{\alpha_1}^2t_{\beta_1}\in \mathcal{H}_{g,1}$.
Hence, the Gysin homomorphism $\pi^!: H_1(\mathcal{H}_g^*)\to H_2(\mathcal{H}_{g,1})$ is the zero map.
Since $[t_{\partial D}]=0\in H_1(\mathcal{H}_{g,1})$, 
the homomorphism 
$\pi^!:\mathbb{Z}\to H_1(\mathcal{H}_{g,1})$ is also trivial. 
Thus we obtain the exact sequence
\[
0\to H_2(\mathcal{H}_{g,1})\to H_2(\mathcal{H}_g^*)\to \mathbb{Z}\to 0.
\]
\end{proof}

Both of the direct sum decompositions of $H_2(\mathcal{H}_g^*)$
in Proposition~\ref{prop:pointed} and Lemma~\ref{lem:boundary}
are induced by the composition of
the natural homomorphism $H_2(\mathcal{H}_g^*)\to H_2(\mathcal{M}_g^*)$ and
$S_1:H_2(\mathcal{M}_g^*)\to \mathbb{Z}$ defined in \cite[Section 4]{harer83} up to sign.
Thus we obtain:
\begin{cor}
When $g\ge4$,
\[
H_2(\mathcal{H}_{g,1})\cong H_2(\mathcal{H}_g).
\]
\end{cor}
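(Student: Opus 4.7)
The plan is to combine Proposition~\ref{prop:pointed} and Lemma~\ref{lem:boundary} via the common projection map identified in the paragraph immediately preceding the corollary, namely the composition
\[
\sigma \;:\; H_2(\mathcal{H}_g^*) \longrightarrow H_2(\mathcal{M}_g^*) \xrightarrow{\;S_1\;} \mathbb{Z}.
\]
For $g \ge 4$ both Proposition~\ref{prop:pointed} and Lemma~\ref{lem:boundary} apply, yielding two split short exact sequences
\[
0 \to \mathbb{Z} \to H_2(\mathcal{H}_g^*) \to H_2(\mathcal{H}_g) \to 0
\quad\text{and}\quad
0 \to H_2(\mathcal{H}_{g,1}) \to H_2(\mathcal{H}_g^*) \to \mathbb{Z} \to 0,
\]
where the relevant maps between mapping class groups are the forgetful homomorphisms $\mathcal{H}_g^* \twoheadrightarrow \mathcal{H}_g$ and $\mathcal{H}_{g,1} \twoheadrightarrow \mathcal{H}_g^*$.

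The key input is the remark preceding the corollary, which asserts that in \emph{both} of these decompositions the projection onto the $\mathbb{Z}$ summand coincides with $\sigma$, up to sign. Consequently, $\ker\sigma$ is one and the same subgroup of $H_2(\mathcal{H}_g^*)$ described in two ways: in the first decomposition it maps isomorphically to $H_2(\mathcal{H}_g)$ via the forgetful map, and in the second it is precisely the image of the injection $H_2(\mathcal{H}_{g,1}) \to H_2(\mathcal{H}_g^*)$ from Lemma~\ref{lem:boundary}, which is isomorphic to $H_2(\mathcal{H}_{g,1})$. Chaining these identifications gives
\[
H_2(\mathcal{H}_{g,1}) \;\cong\; \ker\sigma \;\cong\; H_2(\mathcal{H}_g),
\]
as desired.

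Because the substantive content has been absorbed into the remark preceding the corollary, the corollary itself is essentially formal. Were one instead to justify that remark from scratch, the real work would be to verify that both splittings of $H_2(\mathcal{H}_g^*)$ — the one obtained in Proposition~\ref{prop:pointed} by comparing the Lyndon-Hochschild-Serre spectral sequences of the surface-group extensions for $\mathcal{H}_g^*$ and $\mathcal{M}_g^*$, and the one arising from the Gysin sequence of the central extension $0\to\mathbb{Z}\to\mathcal{H}_{g,1}\to\mathcal{H}_g^*\to 1$ — pull back, under the inclusion $\mathcal{H}_g^*\to\mathcal{M}_g^*$, to the same homomorphism $S_1$ of Harer. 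At the level of the corollary as stated, however, there is no further obstacle.
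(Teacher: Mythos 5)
Your proof is correct and follows essentially the same route as the paper: the authors likewise deduce the corollary directly from the observation that both direct sum decompositions of $H_2(\mathcal{H}_g^*)$ in Proposition~\ref{prop:pointed} and Lemma~\ref{lem:boundary} are induced (up to sign) by the composition $H_2(\mathcal{H}_g^*)\to H_2(\mathcal{M}_g^*)\xrightarrow{S_1}\mathbb{Z}$, so the two complementary summands $H_2(\mathcal{H}_g)$ and $H_2(\mathcal{H}_{g,1})$ are identified as the common kernel. Your explicit articulation of this via $\ker\sigma$ matches the paper's intent exactly.
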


\section{Proof of Theorem~\ref{thm:main theorem} for $g\ge4$}
In the rest of this paper,
we write $H$ for $H_1(\Sigma_g)$
and denote by $L$ the kernel of the homomorphism
$H_1(\Sigma_g)\to H_1(H_g)$ induced by the inclusion 
for simplicity.
Note that $H_1(H_g)$ is isomorphic to $H/L$ as an $\mathcal{H}_g$-module.
In this section,
we prove Theorem~\ref{thm:main theorem} when $g\ge4$.
Luft's result on $\Ker(\mathcal{H}_g\to\Out F_g)$ and Satoh's result on $\Out F_g$
make it much easier to determine the first homology $H_1(\mathcal{H}_g;H)$ when $g\ge4$
than when $g=2,3$.

\begin{lemma}\label{lem:morita-homo}
Let $g\ge2$, and $G$ a subgroup of the mapping class group $\mathcal{M}_g$.
When the induced map $H_1(U\Sigma_g)_G\to H_G$ by the natural projection is injective,
there exists a surjective homomorphism
\[
H_1(G;H)\to \mathbb{Z}/(2g-2)\mathbb{Z}.
\]
\end{lemma}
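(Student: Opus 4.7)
The plan is to extract the factor of $\mathbb{Z}/(2g-2)\mathbb{Z}$ from the Euler class of the unit tangent bundle, using a short exact sequence of $\mathcal{M}_g$-modules and the long exact sequence in $G$-homology it induces.

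First I would set up the short exact sequence of $\mathcal{M}_g$-modules
\[
0 \longrightarrow \mathbb{Z}/(2g-2)\langle z\rangle \longrightarrow H_1(U\Sigma_g) \longrightarrow H \longrightarrow 0,
\]
obtained from the Gysin sequence of the $S^1$-bundle $U\Sigma_g \to \Sigma_g$. Here $z$ is the class of an oriented fiber, and the relation $(2g-2)z = 0$ comes from the Euler number $\chi(\Sigma_g) = 2-2g$. I would note that the subgroup generated by $z$ is $\mathcal{M}_g$-invariant (an orientation-preserving self-diffeomorphism of $\Sigma_g$ lifts to a bundle automorphism of $U\Sigma_g$ that preserves an oriented fiber up to homotopy), and in fact the action on $\mathbb{Z}/(2g-2)\langle z\rangle$ is trivial.

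Next I would restrict this to $G$ and apply the long exact sequence in $G$-homology. The relevant tail is
\[
H_1(G;H) \xrightarrow{\ \delta\ } \bigl(\mathbb{Z}/(2g-2)\bigr)_G \longrightarrow H_1(U\Sigma_g)_G \longrightarrow H_G \longrightarrow 0.
\]
Since the action on the left-hand term is trivial, $(\mathbb{Z}/(2g-2))_G = \mathbb{Z}/(2g-2)\mathbb{Z}$. By hypothesis the last arrow before $H_G$ is injective, so its kernel is zero; by exactness the map $\mathbb{Z}/(2g-2)\mathbb{Z} \to H_1(U\Sigma_g)_G$ is the zero map; by exactness once more, $\delta$ is surjective. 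This produces the desired surjection $H_1(G;H) \twoheadrightarrow \mathbb{Z}/(2g-2)\mathbb{Z}$.

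The only real point to verify carefully is the $\mathcal{M}_g$-equivariance of the Gysin short exact sequence and the triviality of the action on $\langle z\rangle$; this is where one must be a bit attentive, because one is using naturality of the Gysin sequence together with the fact that orientation-preserving mapping classes act trivially on the fiber class. Once this is in place the argument is essentially a one-line diagram chase, and no comparison with Morita's crossed homomorphism itself is needed at this stage — the lemma recovers the $\mathbb{Z}/(2g-2)\mathbb{Z}$ quotient purely from the Euler-class extension.
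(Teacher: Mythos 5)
Your proof is correct and follows essentially the same route as the paper: both use the extension $0\to\mathbb{Z}/(2g-2)\mathbb{Z}\to H_1(U\Sigma_g)\to H\to 0$ coming from the Euler class of the unit tangent bundle and read off the surjectivity of the connecting map from the long exact sequence in $G$-homology together with the injectivity hypothesis. Your added remarks on the $\mathcal{M}_g$-equivariance of the extension and the triviality of the action on the fiber class are points the paper leaves implicit, but the argument is the same.
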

\begin{proof}
The exact sequence
\[
0\to \mathbb{Z}/(2g-2)\mathbb{Z}\to H_1(U\Sigma_g)\to H\to0
\]
induces the exact sequence
\[
H_1(G;H)\to \mathbb{Z}/(2g-2)\mathbb{Z}\to H_1(U\Sigma_g)_{G}\to H_{G}.
\]
Thus, 
we obtain the surjective homomorphism $H_1(\mathcal{H}_g;H)\to \mathbb{Z}/(2g-2)\Z $.
\end{proof}
\begin{remark}\label{rem:surjective-homo}
The homomorphism $H_1(G;H)\to \mathbb{Z}/(2g-2)\mathbb{Z}$ is written in \cite[Section~6]{morita89} explicitly.
This coincide with
the mod $(2g-2)$-reduction of the contraction of the twisted homomorphism
called the first Johnson homomorphism.
In particular, the homomorphism
$H_1(G;H)\to H_1(\mathcal{M}_g;H)$
induced by the inclusion is surjective.
Note that the handlebody mapping class group $\hb{g}$ satisfies the assumption of Lemma~\ref{lem:morita-homo} because of Lemma~\ref{lem:unit-tan}.
\end{remark}

By Lemma~\ref{lem:unit-tan}, we obtain a lower bound on the order of $H_1(\mathcal{H}_g;H)$.
For a simple closed curve $c$ in $\Sigma_g$,
we denote by $\mathcal{H}_g(c)$ the subgroup of $\mathcal{H}_g$ 
which preserves the curve $c$ setwise.
\begin{lemma}\label{lem;lg}
Let $M$ be an $\mathcal{H}_g$-module on which $\mathcal{L}_g$ acts trivially.
Then, we have an exact sequence
\[
\begin{CD}
M_{\mathcal{H}_g(\alpha_1)}@>>> H_1(\mathcal{H}_g;M)@>>> H_1(\Out F_g; M_{\mathcal{L}_g})@>>>0.
\end{CD}
\]
\end{lemma}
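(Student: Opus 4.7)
The plan is to apply the Lyndon--Hochschild--Serre spectral sequence to the extension
\[
1\to\mathcal{L}_g\to\mathcal{H}_g\to\Out F_g\to1
\]
furnished by Luft's theorem (where $F_g\cong\pi_1H_g$). Since $\mathcal{L}_g$ acts trivially on $M$, we have $M_{\mathcal{L}_g}=M$ and $H_1(\mathcal{L}_g;M)\cong H_1(\mathcal{L}_g)\otimes M$, with $\Out F_g$ acting diagonally via conjugation on $\mathcal{L}_g$ and through the quotient on $M$. The five-term exact sequence in low degrees then reads
\[
\bigl(H_1(\mathcal{L}_g)\otimes M\bigr)_{\Out F_g}\to H_1(\mathcal{H}_g;M)\to H_1(\Out F_g;M)\to 0,
\]
so the lemma reduces to constructing a surjection $M_{\mathcal{H}_g(\alpha_1)}\twoheadrightarrow(H_1(\mathcal{L}_g)\otimes M)_{\Out F_g}$.

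Next I would introduce the map $\rho\colon M\to H_1(\mathcal{L}_g)\otimes M$ defined by $\rho(m)=[t_{\alpha_1}]\otimes m$; this makes sense because $\alpha_1$ bounds a meridian disk in $H_g$, so $t_{\alpha_1}\in\mathcal{L}_g$. For any $\psi\in\mathcal{H}_g(\alpha_1)$, the identity $\psi t_{\alpha_1}\psi^{-1}=t_{\psi(\alpha_1)}=t_{\alpha_1}$ (the Dehn twist being insensitive to the orientation of its curve) shows that the $\Out F_g$-class of $\psi$ fixes $[t_{\alpha_1}]$; consequently $[t_{\alpha_1}]\otimes m$ is unchanged in the coinvariants if $m$ is replaced by $\psi m$, and $\rho$ descends to a well-defined $\bar\rho\colon M_{\mathcal{H}_g(\alpha_1)}\to(H_1(\mathcal{L}_g)\otimes M)_{\Out F_g}$.

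For surjectivity of $\bar\rho$ I would combine Luft's theorem, which says that $H_1(\mathcal{L}_g)$ is generated by classes $[t_c]$ of Dehn twists along meridian curves $c\subset\Sigma_g$, with the classical fact that $\mathcal{H}_g$ acts transitively on the isotopy classes of meridian curves. Choosing $\phi\in\mathcal{H}_g$ with $\phi(\alpha_1)=c$ gives $[t_c]=\phi\cdot[t_{\alpha_1}]$ in $H_1(\mathcal{L}_g)$, so in the coinvariants
\[
[t_c]\otimes m=\phi[t_{\alpha_1}]\otimes m\sim[t_{\alpha_1}]\otimes\phi^{-1}m=\bar\rho\bigl(\overline{\phi^{-1}m}\bigr),
\]
which shows that every generator lies in the image of $\bar\rho$. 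Splicing $\bar\rho$ onto the five-term sequence then delivers the exact sequence stated in the lemma. The main obstacle I anticipate is carefully bookkeeping the diagonal $\Out F_g$-action through the coinvariants and invoking transitivity of $\mathcal{H}_g$ on meridian curves; modulo these standard facts about handlebodies, the argument is essentially formal once the spectral sequence is in hand.
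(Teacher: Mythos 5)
Your overall strategy coincides with the paper's: both apply the five-term exact sequence of $1\to\mathcal{L}_g\to\mathcal{H}_g\to\Out F_g\to1$, identify $H_1(\mathcal{L}_g;M)\cong H_1(\mathcal{L}_g)\otimes M$ using the triviality of the $\mathcal{L}_g$-action, and reduce the lemma to showing that $m\mapsto [t_{\alpha_1}]\otimes m$ induces a surjection $M_{\mathcal{H}_g(\alpha_1)}\to (H_1(\mathcal{L}_g)\otimes M)_{\mathcal{H}_g}$; the descent of this map to the $\mathcal{H}_g(\alpha_1)$-coinvariants is argued the same way in both.

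The gap is in your surjectivity step. You invoke ``the classical fact that $\mathcal{H}_g$ acts transitively on the isotopy classes of meridian curves,'' but this is false: a meridian curve (the boundary of a properly embedded disk in $H_g$) may be separating or non-separating in $\Sigma_g$, and no element of $\mathcal{H}_g$ carries the non-separating curve $\alpha_1$ to a separating meridian. Luft's generating set genuinely involves separating meridians --- the result the paper cites normally generates $\mathcal{L}_g$ by $t_{\alpha_1}$ together with $t_\delta$ for a curve $\delta$ that is not in the $\mathcal{H}_g$-orbit of $\alpha_1$ --- so your argument does not account for the generators $[t_c]\otimes m$ with $c$ separating, and as written the surjectivity of $\bar\rho$ is not established. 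The paper closes exactly this hole with the lantern relation: for $g\ge2$ the twist $t_\delta$ is a product of Dehn twists along non-separating meridians, hence $\mathcal{L}_g$ is normally generated by $t_{\alpha_1}$ alone, and only then is $(H_1(\mathcal{L}_g)\otimes M)_{\mathcal{H}_g}$ generated by the elements $[t_{\alpha_1}]\otimes m$. With that supplement (or some other argument handling twists along separating meridians) your proof is complete; without it, the transitivity claim on which it rests is simply wrong.
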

\begin{proof}
The short exact sequence $1\to \mathcal{L}_g\to \mathcal{H}_g\to \Out F_g\to 1$ induces an exact sequence
\begin{equation}\label{eq:exact lg}
\begin{CD}
H_1(\mathcal{L}_g;M)_{\mathcal{H}_g}\to H_1(\mathcal{H}_g;M)\to H_1(\Out F_g;M_{\mathcal{L}_g})\to 0.
\end{CD}
\end{equation}
Luft~\cite[Corollary~2.4]{luft78} proved that $\mathcal{L}_g$ is
normally generated by the Dehn twists along the curves $\alpha_1$ and $\delta$ in Figure~\ref{fig:scc1}.
When $g\ge2$, the lantern relation implies that the Dehn twist $t_\delta$
can be written as a product of Dehn twists along boundary curves of meridian disks.
Thus, $\mathcal{L}_g$ is normally generated by the Dehn twist along $\alpha_1$.
Since $\mathcal{L}_g$ acts on $M$ trivially,
we have $H_1(\mathcal{L}_g;M)_{\mathcal{H}_g}=(H_1(\mathcal{L}_g)\otimes M)_{\mathcal{H}_g}$,
and it is generated by $\{t_{\alpha_1}\otimes m\,|\,m\in M\}$.
Since the surjective homomorphism $M\to H_1(\mathcal{L}_g;M)_{\mathcal{H}_g}$
defined by $m\mapsto t_{\alpha_1}\otimes m$ factors through $M_{\mathcal{H}_g(\alpha_1)}$,
the exact sequence~(\ref{eq:exact lg}) and this homomorphism induce the desired exact sequence.
\end{proof}

\begin{lemma}\label{lem:luft-homology}
\begin{enumerate}
\item $H_1(\mathcal{L}_g;H/L)_{\mathcal{H}_g}\cong0\text{ or }\mathbb{Z}/2\mathbb{Z}$ when $g\ge2$.
\item $H_1(\mathcal{L}_g;L)_{\mathcal{H}_g}=0$ when $g\ge3$,
and $H_1(\mathcal{L}_2;L)_{\mathcal{H}_2}\cong0\text{ or }\mathbb{Z}/2\mathbb{Z}$ when $g=2$.
\end{enumerate}
\end{lemma}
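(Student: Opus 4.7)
The plan is to apply Lemma~\ref{lem;lg} to each of $M = H/L$ and $M = L$, after first verifying that $\mathcal{L}_g$ acts trivially on both. For $H/L = H_1(H_g)$, triviality is immediate from $\mathcal{L}_g = \Ker(\mathcal{H}_g \to \Out\pi_1H_g)$. For $L$, I will invoke Luft's theorem that $\mathcal{L}_g$ is generated by Dehn twists $t_c$ along meridians $c$; since $L$ is Lagrangian for the intersection pairing on $H$, any such twist satisfies $t_c(d) = d + (d\cdot c)\,c = d$ for $d\in L$, so $\mathcal{L}_g$ acts trivially on $L$. Lemma~\ref{lem;lg} then produces the surjections
\[
(H/L)_{\mathcal{H}_g(\alpha_1)} \twoheadrightarrow H_1(\mathcal{L}_g; H/L)_{\mathcal{H}_g}, \qquad L_{\mathcal{H}_g(\alpha_1)} \twoheadrightarrow H_1(\mathcal{L}_g; L)_{\mathcal{H}_g}.
\]

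Next I bound the stabilizer coinvariants. The condition $\phi(\alpha_1)=\alpha_1$ (setwise) forces $\phi(x_1)=\pm x_1$, and symplecticity gives that $\phi$ acts on $L$ (in the basis $x_1,\dots,x_g$) by an upper triangular matrix with $\pm1$ in position $(1,1)$ and arbitrary $\GL(g-1,\Z)$-block in the lower right, with the action on $H/L$ given by the transpose-inverse. I will realize these matrices by elements of $\mathcal{H}_g(\alpha_1)$ of two kinds: handle slides that move a handle other than the first, and a $180^{\circ}$ rotation of the first handle. A handle slide of handle $l\ne1$ over handle $k$ acts on $L$ by $x_k\mapsto x_k-x_l$ and hence kills $x_l$ in the coinvariant; ranging over $l\in\{2,\dots,g\}$ kills $x_2,\dots,x_g$, and the rotation makes $x_1$ into $2$-torsion. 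This gives $L_{\mathcal{H}_g(\alpha_1)}\le\Z/2\Z\cdot[x_1]$, and the dual computation gives $(H/L)_{\mathcal{H}_g(\alpha_1)}\le\Z/2\Z$, which proves part (1) for $g\ge2$ and part (2) for $g=2$.

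To complete part (2) in the case $g\ge3$, I will kill the one possibly surviving class $T:=[t_{\alpha_1}]\otimes x_1$ using the lantern relation. Choose four disjoint non-separating meridians $\alpha_0,\alpha_1,\alpha_2,\alpha_3$ bounding an embedded 4-holed sphere $P\subset\Sigma_g$, so $[\alpha_0]=-(x_1+x_2+x_3)$, and let $b_{12},b_{13},b_{23}$ be the three curves inside $P$ each cobounding a pair of pants with two of the $\alpha_i$'s; each $b_{ij}$ is again a meridian, with $[b_{ij}]=x_i+x_j$. The lantern relation $t_{\alpha_0}t_{\alpha_1}t_{\alpha_2}t_{\alpha_3}=t_{b_{12}}t_{b_{13}}t_{b_{23}}$ holds in $\mathcal{L}_g$ and abelianizes to an identity in $H_1(\mathcal{L}_g)$. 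Tensoring with $z:=x_1+x_2+x_3$ and passing to the $\mathcal{H}_g$-coinvariant, the surjectivity of $\mathcal{H}_g\to\GL(g,\Z)$ lets me pick, for each meridian $c$ appearing, a lift $\phi_c\in\mathcal{H}_g$ with $\phi_c(\alpha_1)=c$, and then $[t_c]\otimes z=[t_{\alpha_1}]\otimes\phi_c^{-1}(z)$. Using the already-established $[t_{\alpha_1}]\otimes x_j=0$ for $j\ne1$, the left-hand side of the lantern identity totals $(-T)+T+T+T=2T$ (the $-T$ coming from the sign of $[\alpha_0]$) while the right-hand side totals $3T$, forcing $T=0$. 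The main obstacle is this final bookkeeping: one must choose the lifts $\phi_c|_L\in\GL(g,\Z)$ compatibly and compute each $\phi_c^{-1}(z)$ carefully so that the two counts do not agree trivially; this is where the hypothesis $g\ge3$ enters, as a 4-holed sphere spanned by three non-separating meridians cannot be embedded in $\Sigma_2$.
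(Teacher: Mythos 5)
Your overall strategy is the paper's: reduce everything to the coinvariants $L_{\mathcal{H}_g(\alpha_1)}$ and $(H/L)_{\mathcal{H}_g(\alpha_1)}$ via the surjection $m\mapsto t_{\alpha_1}\otimes m$ from the proof of Lemma~\ref{lem;lg}, then kill generators with explicit elements of the stabilizer of $\alpha_1$. Your preliminary check that $\mathcal{L}_g$ acts trivially on $L$ is a worthwhile addition, and the lantern computation for $g\ge3$ is a genuinely different (and, as far as I can check, internally consistent) way to finish part (2) in that range: the seven coefficients do come out to $4T$ versus $3T$, forcing $T=0$. But it is a detour — the class $[t_{\alpha_1}]\otimes x_1$ it kills is already zero for every $g\ge2$ (e.g.\ from $r_{1,2}(x_2)-x_2=-x_1-2x_2$ together with $2x_2=0$), and for $g\ge3$ a correctly executed handle-slide computation already annihilates all of $L_{\mathcal{H}_g(\alpha_1)}$, which is what the paper does with the elements $r_{1,j}$, $s_j$ and $t_{g-1}$.

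The genuine gap is in your identification of which handle slides lie in $\mathcal{H}_g(\alpha_1)$, and it is fatal for $g=2$. In your normalization the slide acts by $x_k\mapsto x_k-x_l$, so the meridian whose isotopy class changes is $\alpha_k$ (the handle slid \emph{over}), not $\alpha_l$: compare the paper's $h_i$, which fixes $\alpha_j$ for $j\ne i$, moves $\alpha_i$ to $\alpha_i'$, and acts by $x_i\mapsto x_i-x_{i+1}$. Hence the correct membership condition is $k\ne1$, not $l\ne1$, and killing $x_l$ for $l\ge2$ requires an auxiliary index $k\notin\{1,l\}$, which exists only when $g\ge3$. (This also shows your surviving generator is misidentified: $[x_1]$ dies in $L_{\mathcal{H}_g(\alpha_1)}$ for all $g\ge2$ by sliding handle $1$ over handle $2$; the class that can survive is $[x_g]$.) For $g=2$ the only legal slide gives the relation $x_1=0$ and the rotation gives $2x_1=0$, so every relation you produce lies in $\mathbb{Z}x_1$ and $[x_2]$ remains of infinite order; your toolkit cannot yield the bound ``$0$ or $\mathbb{Z}/2\mathbb{Z}$'' in part (2), and the lantern is unavailable since it needs $g\ge3$. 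The missing ingredient is an element of $\mathcal{H}_2(\alpha_1)$ acting by $-1$ on $x_2$: the paper uses $s_2=t_{\beta_2}t_{\alpha_2}^2t_{\beta_2}$ (with $s_2(x_2)=-x_2$, giving $2x_2=0$) together with $r_{1,2}$ (with $r_{1,2}(x_2)=-x_1-x_2$), neither of which is a handle slide or a rotation of the first handle.
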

\begin{proof}
(1)
Let us denote by $\bar{y}_i$ the image of $y_i$ under the natural homomorphism
$H\to H_1(H_g)\cong H/L$ induced by the inclusion.
There exists a mapping class $r_{1, j}\in \mathcal{H}_g$ for $1<j\le g$
which preserves $\alpha_1$ setwise and satisfies
\[
r_{1, j}(x_{l})=
\begin{cases}
-x_{1}-x_{2}-\cdots-x_j,&\text{ if }l=j,\\
x_l, \text{ otherwise},
\end{cases}\quad
r_{1, j}(y_{l})=
\begin{cases}
y_l-y_j, & \text{ if }1\le l\le j-1,\\
-y_j,&\text{ if }l=j,\\
y_l,&\text{ otherwise}.
\end{cases}
\]
See Lemma~\ref{lem:action} for details.
Then, we have $r_{1,j}(t_{\alpha_1}\otimes \bar{y}_1)=t_{\alpha_1}\otimes \bar{y}_1\in H_1(\mathcal{L}_g;H/L)_{\mathcal{H}_g}$.
Since $r_{1,j}$ commutes with $t_{\alpha_1}$, and $r_{1,j}(\bar{y}_1)=\bar{y}_1-\bar{y}_j$,
we obtain $t_{\alpha_1}\otimes[\bar{y}_j]=0\in H_1(\mathcal{L}_g;H/L)_{\mathcal{H}_g}$ for $j=2,\ldots, g$.
Since the mapping class $(t_{\beta_1}t_{\alpha_1})^3$ 
preserves each $\alpha_i$ setwise for $i=1,2,\ldots, g$,
it is an element in $\mathcal{H}_g$.
Since it satisfies $(t_{\beta_1}t_{\alpha_1})^3(\bar{y}_1)=-\bar{y}_1$,
we have $t_{\alpha_1}\otimes[2\bar{y}_1]=0\in H_1(\mathcal{L}_g;H/L)_{\mathcal{H}_g}$.
As a conclusion, we obtain $H_1(\mathcal{L}_g;H/L)_{\mathcal{H}_g}=0\text{ or }\mathbb{Z}/2\mathbb{Z}$.

(2)
Since $r_{1,j}$ commutes with $t_{\alpha_1}$ for $j=2,3,\ldots, g$,
we obtain $t_{\alpha_1}\otimes[x_1+x_2+\cdots+2x_j]=0\in H_1(\mathcal{L}_g;L)_{\mathcal{H}_g}$.
For $j=1,2,\ldots,g$,
the mapping class $s_j=t_{\beta_j}t_{\alpha_j}^2t_{\beta_j}\in \mathcal{H}_g$
also preserves $\alpha_1$ setwise, and satisfies 
\[
s_j(x_j)=-x_j.
\]
Thus, we also have $t_{\alpha_1}\otimes[2x_j]=0\in H_1(\mathcal{L}_g;L)_{\mathcal{H}_g}$.
Consequently, we obtain $t_{\alpha_1}\otimes[x_1]=t_{\alpha_1}\otimes[x_2]=\cdots=t_{\alpha_1}\otimes[x_{g-1}]=t_{\alpha_1}\otimes[2x_g]=0$,
and it implies $H_1(\mathcal{L}_g;L)_{\mathcal{H}_g}\cong 0$ or $\mathbb{Z}/2\mathbb{Z}$.

Now suppose $g\ge3$.
Then, there exists a mapping class $t_{g-1}$ (see Lemma~\ref{lem:action})
which preserves $\alpha_1$ setwise and satisfies $t_{g-1}(x_g)=x_{g-1}$.
Thus, we also obtain $t_{\alpha_1}\otimes[x_g-x_{g-1}]=0$ when $g\ge3$,
and it implies $H_1(\mathcal{L}_g;L)_{\mathcal{H}_g}=0$.
\end{proof}

Applying Lemma~\ref{lem;lg} to the cases $M=H/L$ and $L$, Lemma~\ref{lem:luft-homology} implies:
\begin{lemma}\label{lem:hg-aut}
When $g\ge3$,
the exact sequence
$1\to\mathcal{L}_g\to \mathcal{H}_g\to \Out F_g\to 1$
induces an isomorphism
\[
H_1(\mathcal{H}_g;L)\cong H_1(\Out F_g;H^1(F_g)).
\]
When $g\ge2$, it also induces an exact sequence
\[
\begin{CD}
\mathbb{Z}/2\mathbb{Z}@>>> H_1(\mathcal{H}_g;H/L) @>>> H_1(\Out F_g;H_1(F_g))@>>>0.
\end{CD}
\]
\end{lemma}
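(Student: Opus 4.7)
The plan is to apply Lemma~\ref{lem;lg} to the two $\mathcal{H}_g$-modules $M=L$ and $M=H/L$, and then combine the output with Lemma~\ref{lem:luft-homology}. First I would verify the hypothesis that $\mathcal{L}_g$ acts trivially on both modules. Since $\mathcal{L}_g$ is the twist group, generated by Dehn twists along boundaries $c$ of meridian disks, each such generator $t_c$ acts on $H$ by the transvection $v\mapsto v+\langle v,[c]\rangle[c]$. As $[c]\in L$, this action is trivial on the quotient $H/L$, and the Lagrangian property of $L$ gives $\langle v,[c]\rangle=0$ for $v\in L$, so the action is also trivial on $L$.

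Next I would identify the coefficient modules in the target groups. The inclusion $\Sigma_g\hookrightarrow H_g$ together with the standard isomorphism $\pi_1 H_g\cong F_g$ realizes $H/L\cong H_1(H_g)\cong H_1(F_g)$ as $\Out F_g$-modules. The algebraic intersection pairing on $H$ restricts to a perfect pairing $L\otimes (H/L)\to\mathbb{Z}$ (since $L$ is Lagrangian), yielding a natural isomorphism $L\cong\Hom(H_1(F_g),\mathbb{Z})=H^1(F_g)$. Both identifications are $\Out F_g$-equivariant because the intersection form is $\mathcal{H}_g$-invariant.

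Now I would invoke Lemma~\ref{lem;lg} with $M=L$ when $g\ge 3$, obtaining the exact sequence
\[
L_{\mathcal{H}_g(\alpha_1)}\longrightarrow H_1(\mathcal{H}_g;L)\longrightarrow H_1(\Out F_g;H^1(F_g))\longrightarrow 0.
\]
From the construction used in the proof of Lemma~\ref{lem;lg}, the image of the leftmost map coincides with the image of $H_1(\mathcal{L}_g;L)_{\mathcal{H}_g}\to H_1(\mathcal{H}_g;L)$ appearing in the Lyndon--Hochschild--Serre five-term exact sequence. By Lemma~\ref{lem:luft-homology}(2) this group vanishes when $g\ge 3$, so the leftmost map is zero and the middle arrow becomes an isomorphism, giving the first assertion. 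Applying Lemma~\ref{lem;lg} with $M=H/L$ for $g\ge 2$ produces
\[
(H/L)_{\mathcal{H}_g(\alpha_1)}\longrightarrow H_1(\mathcal{H}_g;H/L)\longrightarrow H_1(\Out F_g;H_1(F_g))\longrightarrow 0,
\]
and the parallel argument, combined with Lemma~\ref{lem:luft-homology}(1), shows that the image of the leftmost map is a quotient of a group of order at most $2$. This yields the desired exact sequence with $\mathbb{Z}/2\mathbb{Z}$ on the left.

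The real work has already been discharged by the preceding Lemma~\ref{lem;lg} and Lemma~\ref{lem:luft-homology}, so I do not anticipate a serious technical obstacle here; the only point worth double-checking is the $\Out F_g$-equivariance of the duality $L\cong H^1(F_g)$, which follows from the $\mathcal{H}_g$-invariance of the intersection pairing.
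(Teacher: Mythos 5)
Your proposal is correct and follows the paper's own route: the paper deduces the lemma in one line by applying Lemma~\ref{lem;lg} to $M=L$ and $M=H/L$ and feeding in the bounds on $H_1(\mathcal{L}_g;M)_{\mathcal{H}_g}$ from Lemma~\ref{lem:luft-homology}, exactly as you do. The details you supply — that $\mathcal{L}_g$ acts trivially on $L$ and $H/L$ via the transvection formula for twists along meridian boundaries, and that the intersection pairing gives the $\Out F_g$-equivariant identifications $H/L\cong H_1(F_g)$ and $L\cong H^1(F_g)$ — are left implicit in the paper but are the right justifications.
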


The twisted first homology groups of $\Out F_n$ with coefficients in $H_1(F_n)$ and $H^1(F_n)$
were computed by Satoh~\cite[Theorem~1~(2)]{satoh06} as follows.
\begin{theorem}[Satoh~{\cite[Theorem~1~(2)]{satoh06}}]\label{thm:satoh}
\begin{align*}
H_1(\Out F_n; H^1(F_n))&\cong
\begin{cases}
\mathbb{Z}/(n-1)\Z &\text{when }n\ge4,\\
(\mathbb{Z}/2\Z )^2&\text{when }n=3,\\
\mathbb{Z}/2\Z &\text{when }n=2,
\end{cases}
\\
H_1(\Out F_n; H_1(F_n))&\cong
\begin{cases}
0&\text{when }n\ge4,\\
\mathbb{Z}/2\Z &\text{when }n=2,3.
\end{cases}
\end{align*}
\end{theorem}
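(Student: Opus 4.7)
The plan is to use the short exact sequence $1\to\operatorname{Inn} F_n\to\Aut F_n\to\Out F_n\to 1$ together with its five-term exact sequence. Since $\operatorname{Inn} F_n\cong F_n$ for $n\ge 2$ and inner automorphisms act trivially on both $H_1(F_n)$ and $H^1(F_n)$ (they are the identity on the abelianization), for $M\in\{H_1(F_n),H^1(F_n)\}$ this reads
\[
(H_1(F_n)\otimes M)_{\Out F_n}\longrightarrow H_1(\Aut F_n;M)\longrightarrow H_1(\Out F_n;M)\longrightarrow 0,
\]
which reduces the problem to computing $H_1(\Aut F_n;M)$ and identifying the image of the left-hand map.

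For $H_1(\Aut F_n;M)$ I would invoke Kawazumi's extension of the first Andreadakis--Johnson homomorphism to a crossed homomorphism $\Aut F_n\to H^1(F_n)\otimes H_1(F_n)^{\otimes 2}$, as recalled in the introduction: its contraction represents a generator of $H_1(\Aut F_n;H^1(F_n))\cong\mathbb{Z}$, and an analogous construction on the $H_1(F_n)$-side should give $H_1(\Aut F_n;H_1(F_n))=0$ for $n\ge 4$. To compute the image from $(H_1(F_n)\otimes M)_{\Out F_n}$, I would evaluate the contracted Kawazumi cocycle on inner automorphisms $\iota_g$ as $g$ runs through a basis of $F_n$. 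The resulting trace-type calculation should yield $n-1$ times a generator, in exact analogy with the factor $2g-2$ appearing in Morita's calculation for $\mcg$. Since $\Out F_n$ surjects onto $\GL_n(\mathbb{Z})$ and $(H_1(F_n)\otimes H^1(F_n))_{\GL_n(\mathbb{Z})}$ is detected by the trace, the image is $(n-1)\mathbb{Z}\subset\mathbb{Z}$, producing the desired $\mathbb{Z}/(n-1)\mathbb{Z}$ for $n\ge 4$.

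The hard part will be the low-rank range $n=2,3$, where extra $\mathbb{Z}/2$-summands appear that are invisible to any torsion-free contraction map. Here I would switch to Nielsen's explicit finite presentation of $\Aut F_n$, express $H_1(\Aut F_n;M)$ as the cokernel of the matrix of Fox derivatives of the Nielsen relators evaluated in $M$, and then quotient by the image of the inner automorphism subgroup to descend to $\Out F_n$. For $n=2$, where $\Out F_2\cong\GL_2(\mathbb{Z})$, this reduces to an elementary calculation from the standard $\GL_2(\mathbb{Z})$-presentation; the $n=3$ case requires a more careful Fox-calculus bookkeeping but is still accessible from Nielsen's generators and relators, and the $\mathbb{Z}/2$-torsion should be traceable to the specific Nielsen commutation relations that become trivial only in the stable range.
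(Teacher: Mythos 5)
The first thing to note is that the paper does not prove this statement at all: it is quoted, with explicit attribution, as Theorem~1~(2) of Satoh's paper \cite{satoh06}, and is then used as a black box in Lemma~\ref{lem:first homology-H}. So there is no internal proof to compare yours against; what can be assessed is whether your sketch would reconstruct Satoh's result. Your overall architecture --- the five-term exact sequence of $1\to\operatorname{Inn}F_n\to\Aut F_n\to\Out F_n\to 1$ (valid since $\operatorname{Inn}F_n\cong F_n$ acts trivially on $H_1(F_n)$ and $H^1(F_n)$), the identification $(H_1(F_n)\otimes H^1(F_n))_{\Out F_n}\cong\mathbb{Z}$ via the trace, and the evaluation of the contracted Kawazumi cocycle on inner automorphisms to produce the factor $n-1$ --- is sound, and is in fact close in spirit to how Satoh actually argues (from an explicit finite presentation of $\Aut F_n$).

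As a proof, however, the sketch has two genuine gaps. First, you cannot obtain $H_1(\Aut F_n;H_1(F_n))=0$ for $n\ge4$ from ``an analogous construction'' to Kawazumi's cocycle: exhibiting a crossed homomorphism can only detect nontrivial classes, never establish vanishing. For the same reason you need to know that $H_1(\Aut F_n;H^1(F_n))$ is exactly $\mathbb{Z}$, not merely that it surjects onto $\mathbb{Z}$; otherwise evaluating the Kawazumi class on the image of $(H_1(F_n)\otimes H^1(F_n))_{\Out F_n}$ only bounds a quotient of $H_1(\Out F_n;H^1(F_n))$ rather than computing it. Both facts require an honest computation from a presentation of $\Aut F_n$, which is precisely the content of Satoh's argument and is not supplied here. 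Second, the low-rank cases $n=2,3$ are exactly where the answer deviates from $\mathbb{Z}/(n-1)\mathbb{Z}$ --- the extra $\mathbb{Z}/2\mathbb{Z}$-summands are the entire content of those cases --- and your proposal defers them wholesale to an unexecuted Fox-calculus computation from Nielsen's presentation. Until that computation is carried out (and it is delicate: this is where sign and torsion bookkeeping actually decides the answer), the statement for $n=2,3$ is not established. In short: the strategy is the right one, but the two computational pillars it rests on are asserted rather than proved.
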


By Lemma~\ref{lem:hg-aut} and Theorem~\ref{thm:satoh},
we obtain:
\begin{lemma}\label{lem:first homology-H}
\[
H_1(\mathcal{H}_g;L)\cong
\begin{cases}
\mathbb{Z}/(g-1)\Z &\text{ if }g\ge4,\\
(\mathbb{Z}/2\Z )^2&\text{ if }g=3,
\end{cases}\quad
H_1(\mathcal{H}_g;H/L)\cong \mathbb{Z}/2\Z\text{ if }g\ge4.
\]
\end{lemma}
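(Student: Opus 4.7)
The plan is to prove Lemma~\ref{lem:first homology-H} as a nearly direct application of Lemma~\ref{lem:hg-aut} and Satoh's Theorem~\ref{thm:satoh}, with one extra input needed to pin down $H_1(\mathcal{H}_g;H/L)$ exactly. The preparatory observation is that, as $\mathcal{H}_g$-modules with $\mathcal{H}_g$ acting through $\Out F_g\cong\Out\pi_1(H_g)$, we have canonical isomorphisms $H/L\cong H_1(F_g)$ and $L\cong H^1(F_g)$, the latter coming from the fact that the intersection form on $H$ exhibits the Lagrangian $L$ as the $\Z$-dual of $H/L$.

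First I would plug these identifications into Lemma~\ref{lem:hg-aut}. Part (1) of that lemma, valid for $g\ge3$, yields $H_1(\mathcal{H}_g;L)\cong H_1(\Out F_g;H^1(F_g))$, and Theorem~\ref{thm:satoh} then reads off the value as $\Z/(g-1)\Z$ for $g\ge 4$ and as $(\Z/2\Z)^2$ for $g=3$. This disposes of the first half of the lemma with no further work. For the second half, part (2) of Lemma~\ref{lem:hg-aut} together with Satoh's vanishing $H_1(\Out F_g;H_1(F_g))=0$ for $g\ge4$ shows that $H_1(\mathcal{H}_g;H/L)$ is a quotient of $\Z/2\Z$, providing the desired upper bound.

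The remaining and main obstacle is to exclude the possibility $H_1(\mathcal{H}_g;H/L)=0$. To do this I would use the long exact sequence in group homology induced by the short exact sequence $0\to L\to H\to H/L\to 0$ of $\mathcal{H}_g$-modules, which near the bottom reads
\[
H_1(\mathcal{H}_g;L)\to H_1(\mathcal{H}_g;H)\to H_1(\mathcal{H}_g;H/L)\to L_{\mathcal{H}_g}.
\]
Lemma~\ref{lem:H_0-L} gives $L_{\mathcal{H}_g}=0$, so $H_1(\mathcal{H}_g;H/L)$ is identified with the cokernel of the first arrow. From the first half of the lemma $|H_1(\mathcal{H}_g;L)|=g-1$, while Remark~\ref{rem:surjective-homo} applied to $\mathcal{H}_g$ (which satisfies the hypothesis of Lemma~\ref{lem:morita-homo} by Lemma~\ref{lem:unit-tan}) produces a surjection $H_1(\mathcal{H}_g;H)\to\Z/(2g-2)\Z$. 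Since $2g-2=2(g-1)$, a cardinality comparison forces the cokernel to have order at least $2$, yielding the required lower bound and completing the proof. The subtlety is that the Luft--Satoh machinery alone only delivers upper bounds, and the extra factor of $2$ must be imported from the full mapping class group story via Morita's twisted Johnson homomorphism.
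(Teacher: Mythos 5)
Your proof is correct and follows essentially the same route as the paper: the computation of $H_1(\mathcal{H}_g;L)$ and the upper bound $|H_1(\mathcal{H}_g;H/L)|\le 2$ come straight from Lemma~\ref{lem:hg-aut} and Theorem~\ref{thm:satoh}, which is exactly the paper's one-line justification. The one point where you go beyond that one-liner — establishing the lower bound $|H_1(\mathcal{H}_g;H/L)|\ge 2$ via the exact sequence $H_1(\mathcal{H}_g;L)\to H_1(\mathcal{H}_g;H)\to H_1(\mathcal{H}_g;H/L)\to 0$ (using $L_{\mathcal{H}_g}=0$ from Lemma~\ref{lem:H_0-L}) together with the surjection $H_1(\mathcal{H}_g;H)\to\Z/(2g-2)\Z$ of Lemma~\ref{lem:morita-homo} — is precisely the cardinality comparison the paper carries out immediately after stating the lemma in order to finish Theorem~\ref{thm:main theorem} for $g\ge4$, so you have simply made explicit, inside the proof of the lemma, a step the paper defers; this is a presentational improvement rather than a different method.
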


\begin{remark}
Theorem~\ref{thm:satoh} and Lemma~\ref{lem:first homology-H} show
$\Ker(H_1(\mathcal{H}_g;H/L)\to H_1(\Out F_g;H_1(F_g)))\cong\mathbb{Z}/2\mathbb{Z}$ when $g\ge4$.
Thus Lemma~\ref{lem:luft-homology}~(1) implies
$H_1(\mathcal{L}_g;H/L)_{\mathcal{H}_g}\cong\mathbb{Z}/2\mathbb{Z}$ when $g\ge4$.
\end{remark}

\begin{remark}\label{rem:genus23H/L}
By Lemma~\ref{lem:hg-aut} and Theorem~\ref{thm:satoh},
we see that the order of $H_1(\mathcal{H}_g;H/L)$ for $g=2,3$ is at most 4.
In Propositions~\ref{prop:H/L genus2} and \ref{prop:H/L genus3},
we will show
$H_1(\mathcal{H}_2;L)\cong \mathbb{Z}/2\mathbb{Z}$ and $H_1(\mathcal{H}_g;H/L)\cong(\mathbb{Z}/2\mathbb{Z})^2$ for $g=2,3$.
By Lemma~\ref{lem:luft-homology}~(1) and Theorem~\ref{thm:satoh},
it also follows that $H_1(\mathcal{L}_g;H/L)_{\mathcal{H}_g}\cong\mathbb{Z}/2\mathbb{Z}$
for $g=2,3$.
\end{remark}

By Lemma~\ref{lem:H_0-L},
$H_0(\mathcal{H}_g;L)=L_{\mathcal{H}_g}=0$ for $g\ge2$.
Thus, the short exact sequence of $\mathcal{H}_g$-modules $0\to L\to H\to H/L\to 0$
induces an exact sequence
\begin{equation}\label{eq:l-h/l}
\begin{CD}
H_1(\mathcal{H}_g;L)@>>> H_1(\mathcal{H}_g;H)@>>> H_1(\mathcal{H}_g;H/L)@>>> 0.
\end{CD}
\end{equation}
Lemmas~\ref{lem:morita-homo} and \ref{lem:first homology-H}
and the exact sequence (\ref{eq:l-h/l}) give an upper bound
on the order of $H_1(\mathcal{H}_g;H)$.
Comparing this with the lower bound obtained in Lemma~\ref{lem:morita-homo},
we complete the proof of Theorem~\ref{thm:main theorem} for $g\ge4$.

\begin{remark}\label{rem:L-H}
In the proof of Theorem~\ref{thm:main theorem} above,
we also see the sequence
\[
\begin{CD}
0@>>>H_1(\mathcal{H}_g;L)@>>> H_1(\mathcal{H}_g;H)@>>> H_1(\mathcal{H}_g;H/L)@>>> 0
\end{CD}
\]
is exact when $g\ge4$.
\end{remark}

\section{The Wajnryb's presentation of the handlebody mapping class group}
In this section,
we review the Wajnryb's presentation of the handlebody mapping class group $\mathcal{H}_g$ 
and compute the action of the handlebody mapping class group $\mathcal{H}_g$
to the first homology $H_1(\Sigma_g)$.
This is for preparing to calculate the twisted first homology $H_1(\mathcal{H}_g;H)$ when $g=2,3$ in Section~\ref{section:g23}.
\subsection{A presentation of the handlebody mapping class group}\label{subsection:wajnryb}
Let $g\ge2$.
We identify the surface in Figure~\ref{fig:scc1} with that in Figure~\ref{fig:surface}.
Let $\epsilon_i$ be a simple closed curve in Figure~\ref{fig:surface} for $i=1,\ldots,g-1$.
\begin{figure}[htbp]
\begin{center}
\includegraphics[width=130mm]{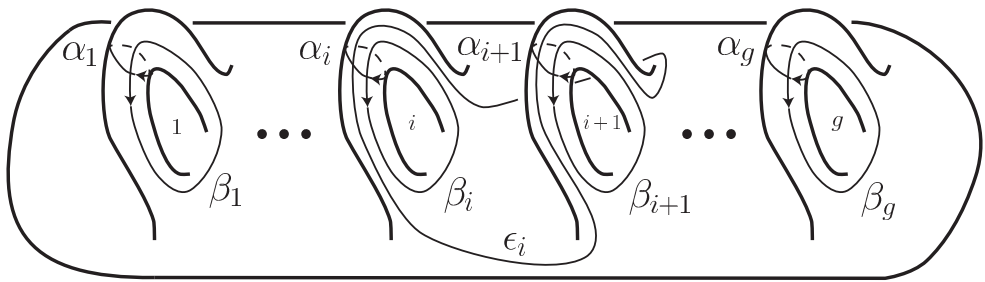}
\end{center}
\caption{the surface $\Sigma_g$}
\label{fig:surface}
\end{figure}
By cutting the surface $\Sigma_g$ along the simple closed curves $\alpha_1,\ldots,\alpha_g$,
we obtain a $(2g)$-holed sphere with boundary components
$\{\partial_{-i},\partial_i\}_{i=1}^g$ as in Figure~\ref{fig:sphere},
where $\alpha_i$ and $\beta_i$ correspond to the boundary components $\partial_{-i}\amalg \partial_i$ and the path from $\partial_{-i}$ to $\partial_i$, respectively.
\begin{figure}[htbp]
\begin{center}
\includegraphics[width=120mm]{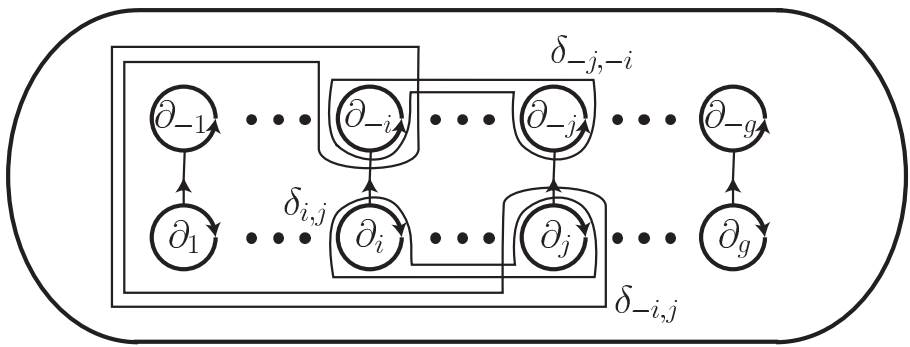}
\end{center}
\caption{the $(2g)$-holed sphere}
\label{fig:sphere}
\end{figure}
For integers $i,j$ satisfying $1\le i<j\le g$, 
we denote by $\delta_{-j,-i}$ and $\delta_{i,j}$ the simple closed curves in Figure~\ref{fig:sphere}.
For integers $i,j$ satisfying $1\le i\le g$ and $1\le j\le g$,
we also denote by $\delta_{-i,j}$ the simple closed curve in Figure~\ref{fig:sphere}.
For simplicity,
we denote by $a_i, b_i, e_i, d_{1,2}$ the Dehn twists along the curves $\alpha_i, \beta_i, \epsilon_j, \delta_{1,2}$, respectively.
Let us denote 
\begin{align*}
I_0&=\{-g,-(g-1),\ldots,-2, -1,1,2\ldots,g-1,g\},\\
s_{1}&=b_1a_1^2b_1,\\
t_i&=e_ia_ia_{i+1}e_i, \text{ for }i=1,\ldots, g-1.
\end{align*}
Since $t_i$ permutes the simple closed curves $\alpha_i$ and $\alpha_{i+1}$, and fixes other $\alpha_j$, 
we also have $t_i\in\mathcal{H}_g$.
In the following,
we denote $\varphi*\psi=\varphi\psi\varphi^{-1}$ 
for $\varphi, \psi\in\mathcal{H}_g$.
For $i,j\in I_0$ satisfying $i<j$,
we denote
\begin{align*}
d_{i,j}&=(t_{i-1}t_{i-2}\cdots t_1t_{j-1}t_{j-2}\cdots t_2)*d_{1,2}\text{ if }i>0,\\
d_{i,j}&=(t_{-i-1}^{-1}t_{-i-2}^{-1}\cdots t_1^{-1}s_{1}^{-1}t_{j-1}t_{j-2}\cdots t_2)*d_{1,2}\text{ if }i<0\text{ and }i+j>0,\\
d_{i,j}&=(t_{-i-1}^{-1}t_{-i-2}^{-1}\cdots t_1^{-1}s_1^{-1}t_jt_{j-1}\cdots t_2)*d_{1,2}\text{ if }j>0 \text{ and }i+j<0,\\
d_{i,j}&=(t_{-j-1}^{-1}t_{-j-2}^{-1}\cdots t_1^{-1}t_{-i-1}^{-1}t_{-i-2}^{-1}\cdots t_2^{-1}s_{1}^{-1}t_1^{-1}s_{1}^{-1})*d_{1,2}\text{ if }j<0,\\
d_{i,j}&=(t_{j-1}^{-1}d_{j-1,j}t_{j-2}^{-1}d_{j-2,j-1}\cdots t_1^{-1}d_{1,2})*(s_{1}^2a_{1}^4),\text{ if }i+j=0.
\end{align*}
Here, $d_{i,j}$ is actually the Dehn twist along $\delta_{i,j}$ in Figure~\ref{fig:sphere}
as explained in \cite[p. 211]{wajnryb98}.
However, to give a presentation of $\mathcal{H}_g$ with a small generating set,
we treat $d_{i,j}$ as the products above.
We also denote
\begin{align*}
d_{I}&=(d_{i_1,i_2}d_{i_1,i_3}\cdots d_{i_1,i_n}d_{i_2,i_3}\cdots d_{i_2,i_n}d_{i_3,i_4}\cdots d_{i_{n-1},i_n})(a_{i_1}\cdots a_{i_n})^{2-n},\\
&\text{where }I=\{i_1,\ldots,i_n\}\subset I_0\text{ and }i_1<\cdots<i_n,\\
c_{i,j}&=d_I,\text{ where }I=\{k\in I_0\,|\,i\le k\le j\} 
\text{ for }i\le j.
\end{align*}
Here, $d_{I}$ and $c_{i,j}$ are the Dehn twists along simple closed curves which enclose $\{\partial_{i_1},\ldots,\partial_{i_n}\}$ and $\{\partial_i,\ldots,\partial_j\}$, respectively.
See \cite[p. 211]{wajnryb98}, for details.
Let us denote
\[
\tilde{I}=\{(i,j)\in I_0^2\,|\,i=1, 1<j\}\cup\{(i,j)\in I_0^2\,|\,i<0, -i<j\le g+i\},
\]
and 
\begin{align*}
r_{i,j}&=b_ja_jc_{i,j}b_j, \text{ for }(i,j)\in \tilde{I},\\
k_j&=a_ja_{j+1}t_jd_{j,j+1}^{-1}\text{ for }j=1,\ldots,g-1,\\
s_j&=(k_{j-1}k_{j-2}\cdots k_1)*s_1\text{ for }j=2,\ldots, g,\\
z&=a_{1}a_{2}\cdots a_{g} 
s_{1}t_{1}t_{2}\cdots t_{g-1}s_{1}t_{1}\cdots t_{g-2}
s_{1}\cdots s_{1}t_{1}s_{1}d_{I}, \text{ where }I=\{1,\ldots, g\},\\
z_j&=k_{j-1}k_{j-2}\cdots k_{g+1-j}z \text{ for }j>\frac{g}{2}.
\end{align*}
Here, $r_{i,j}$ also lies in $\mathcal{H}_g$ as is explained in \cite[p. 211]{wajnryb98}.
For $\varphi, \psi\in \mathcal{H}_g$,
let us denote their commutator by $[\varphi,\psi]=\varphi\psi\varphi^{-1}\psi^{-1}$.
\begin{theorem}\cite[Theorem18]{wajnryb98}\label{thm:wajnryb}
The handlebody mapping class group of genus $g$ admits the following presentation:
The set of generators consists of $a_1, \ldots, a_g$, $d_{1,2}$, $s_{1}$, $t_1, \ldots, t_{g-1}$,
and $r_{i,j}$ for $(i,j)\in \tilde{I}$.
The set of defining relations is:
\begin{enumerate}
\renewcommand{\labelenumi}{(P\arabic{enumi})}
\item
$[a_i,a_j]=1$, $[a_i,d_{j,k}]=1$, for all $i,j,k\in I_0$,
\item 
Let $i,j,r,s\in I_0$.
\begin{enumerate}
\item $d_{r,s}^{-1}*d_{i,j}=d_{i,j}$ if $r<s<i<j$ or $i<r<s<j$,
\item $d_{r,i}^{-1}*d_{i,j}=d_{r,j}*d_{i,j}$ if $r<i<j$,
\item $d_{i,s}^{-1}*d_{i,j}=(d_{i,j}d_{s,j})*d_{i,j}$ if $i<s<j$,
\item $d_{r,s}^{-1}*d_{i,j}=[d_{r,j},d_{s,j}]*d_{i,j}$ if $r<i<s<j$,
\end{enumerate}
\item $d_{I_0}=1$,
\item $d_{I_k}=a_{|k|}$ where $I_k=I_0-\{k\}$ for $k\in I_0$,
\item $t_it_{i+1}t_i=t_{i+1}t_it_{i+1}$ for $i=1,\ldots,g-2$,
and $[t_i,t_j]=1$ if $1\le i<j-1<g-1$,
\item $t_i^2=d_{i,i+1}d_{-i-1,-i}a_i^{-2}a_{i+1}^{-2}$ for $i=1,\ldots,g-1$,
\item $[s_{1}, a_i]=1$ for $i=1,\ldots,g$,
$t_i*a_i=a_{i+1}$ for $i=1,\ldots,g-1$,
$[a_i,t_j]=1$ for $i,j\in I_0$ satisfying $j\ne i, i-1$,
and $[t_{i}, s_{1}]=1$ for $i=2,\ldots, g-1$,
\item $[s_{1}, d_{2, 3}]=1$,
$[s_{1}, d_{-2,2}]=1$,
$s_{1}t_{1}s_{1}t_{1}=t_{1}s_{1}t_{1}s_{1}$,
and $[t_i,d_{1,2}]=1$ for $i=1,3,\ldots,g-1$,
\item $r_{i,j}^2=s_jc_{i,j}s_jc_{i,j}^{-1}$ for $(i,j)\in\tilde{I}$,
\item Let $(i,j)\in \tilde{I}$. 
\begin{enumerate}
\item $r_{i,j}*a_j=c_{i,j}$ and $[r_{i,j}, a_k]=1$ if $k\ne j$,
\item $[r_{i,j},t_k]=1$ if $k\ne |i|,j$ or $k=i=1<j-1$,
\item $[r_{i,j},s_k]=1$ if $k<|i|$, $j<k$ or $k=-i$,
\item $[r_{i,j},d_{k,m}]=1$ if $k,m\in\{i,\ldots, j-1\}$ or $k,m\notin \{-j,i,i+1,\ldots,j\}$,
\item $[r_{i,j},z_j]=1$ if $(i,j)=(1,g)$ or $j=g+i$,
\item $r_{i,j}*d_{i,j}=d_J$ where $J=\{k\in I_0; i<k\le j\}$,
\item $r_{1,j}*d_{-j,1-j}=(t_{j-2}t_{j-3}\cdots t_1)*c_{-1,j}$,
\item $r_{i,j}*d_{-j,1-j}=(t_{j-2}t_{j-3}\cdots t_{1-i})*c_{i-1,j}$ if $i<0$ and $j+i>1$,
\item $r_{i,j}^{-1}*d_{-j-1,-j}=s_{j+1}^{-1}*c_{i,j+1}$ if $j<g$,
\end{enumerate}
\item $r_{i,j}*t_{j-1}=t_{j-1}^{-1}*r_{i,j}$ if $(i,j)\in \tilde{I}$ and $-i+1\ne j$,
\item 
\begin{enumerate}
\item Let $h_2=k_{j-1}^{-1}t_{j-2}^{-1}t_{j-3}^{-1}\cdots t_1^{-1}k_{j-1}k_{j-2}\cdots k_2$.
\[
r_{1, j}=s_jc_{1,j}s_jc_{1,j}^{-1}
k_{j-1}a_jc_{1,j-2}t_{j-1}c_{1,j-1}^{-1}t_{j-1}^{-1}r_{1,j-1}^{-1}s_{j-1}
h_2r_{1,2}^{-1}h_2^{-1}k_{j-1}^{-1}
\]
for $3\le j\le g$.
\item Let $h_3=s_1k_{j-1}k_{j-2}\cdots k_2$.
\[
r_{-1, j}=h_{3}r_{1,2}^{-1}h_{3}^{-1}
s_{j}r_{1, j}^{-1}c_{-1, j-1}^{-1}c_{1, j-1}a_{1}s_{j}c_{-1, j}s_{j}c_{-1, j}^{-1}
\]
for $2\le j\le g-1$.
\item Let $h_3=s_{-i}t_{-1-i}^{-1}t_{-2-i}^{-1}\cdots t_1^{-1}k_{j-1}k_{j-2}\cdots k_3k_2$.
\[
r_{i,j}=h_3r_{1,2}^{-1}h_3^{-1}s_jr_{i+1,j}^{-1}c_{i,j-1}^{-1}c_{i+1,j-1}a_{-i}s_jc_{i,j}s_jc_{i,j}^{-1}
\]
for $i<-1$ and $(i,j)\in \tilde{I}$.
\end{enumerate}
\end{enumerate}
\end{theorem}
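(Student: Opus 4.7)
The plan is to derive the presentation by combining a known presentation of the full mapping class group $\mathcal{M}_g$ with a geometric analysis of the action of $\mathcal{H}_g$ on a complex of meridian systems. First, I would start from Wajnryb's own finite presentation of $\mathcal{M}_g$ in terms of Dehn twists along the $\alpha_i$, $\beta_i$ and the auxiliary curves $\epsilon_i, \delta_{i,j}$, and identify $\mathcal{H}_g$ with the subgroup of $\mathcal{M}_g$ consisting of mapping classes that extend over $H_g$, equivalently those that preserve the isotopy class of the meridian system $\{\alpha_1,\ldots,\alpha_g\}$ up to permutation.

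The generating step is to show that the listed elements generate $\mathcal{H}_g$. Here the $t_i$ realize adjacent transpositions of meridians, the $r_{i,j}$ are handle-slides that rearrange meridians, $s_1$ is a hyperelliptic-type involution needed to account for orientation-reversals of meridians, and the Dehn twists $a_i, d_{1,2}$ generate the portion of $\mathcal{L}_g = \Ker(\mathcal{H}_g \to \Out F_g)$ that is invisible to the outer action on $\pi_1 H_g$. I would verify generation by combining Luft's theorem (which says $\mathcal{L}_g$ is the twist subgroup, normally generated by $a_1$ once the lantern relation is used to eliminate twists along separating meridians) with a known generating set for $\Out F_g$ in the style of McCool or Nielsen, then rewriting each such generator in terms of the listed elements.

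The next step is a curve-by-curve verification that each of (P1)--(P12) holds in $\mathcal{H}_g$: relations (P1)--(P4) are the standard disjointness, braid and lantern-type relations for Dehn twists around disjoint or intersecting sphere curves; (P5)--(P8) encode the braid and chain relations for the half-twists $t_i$ and the hyperelliptic involution $s_1$; (P9)--(P12) are the nontrivial handle-slide identities, checked by drawing representative diffeomorphisms and comparing the images of all $\alpha_i$ and $\beta_i$. This part is conceptually routine but bookkeeping-heavy.

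The main obstacle, and the crux of Wajnryb's proof in \cite{wajnryb98}, is \emph{completeness}: showing that no further relations are needed. The natural approach is to exhibit a simply connected $2$-dimensional CW-complex $X$ on which $\mathcal{H}_g$ acts with connected fundamental domain and with vertex- and edge-stabilizers whose presentations are already understood. A natural choice is (a variant of) the cut-system complex of Hatcher--Thurston, restricted to meridian systems of $H_g$. Applying Brown's method for presentations of groups acting on simply connected complexes, one obtains generators from a generating set of the stabilizer of a basepoint together with edge-path generators crossing each orbit of edges, and relations from $2$-cells, from edge-path cycles in the $1$-skeleton of the quotient, and from relations in each vertex stabilizer. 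The delicate part is to show that after rewriting in the proposed generators, the resulting relations are all consequences of (P1)--(P12); this is where essentially all the technical work resides, and where I would expect the argument to occupy the majority of the proof.
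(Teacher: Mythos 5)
The paper does not prove this statement: it is quoted verbatim (with corrections) from Wajnryb's paper, and the authors' only original contribution here is the list of errata recorded in the remark that follows the theorem --- the definition of $z_j$ as the product $k_{j-1}\cdots k_{g+1-j}z$ rather than a conjugate, the added hypothesis $-i+1\ne j$ in (P11), and the removal of the redundant relation $s_1^2=d_{-1,1}a_1^{-4}$ from (P6). So there is no in-paper proof to compare your attempt against; the relevant benchmark is Wajnryb's original argument.

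Measured against that argument, your outline has the right overall shape: $\mathcal{H}_g$ is indeed treated as the subgroup of $\mathcal{M}_g$ preserving the meridian system, and the presentation is obtained from the action of $\mathcal{H}_g$ on a simply connected two-dimensional complex built from systems of meridian disks (a handlebody analogue of the Hatcher--Thurston cut-system complex), with generators and relations read off from stabilizers, edge paths, and $2$-cells. But what you have written is a plan rather than a proof: essentially all of the content lies in the steps you defer --- constructing the complex, proving its simple connectivity, presenting the vertex and edge stabilizers, and showing that every relation produced by the machine reduces to (P1)--(P12). That last reduction is precisely where the delicate side conditions of the stated relations are determined (for instance, exactly which pairs $(i,j)$ satisfy the braid-type relation (P11)); Wajnryb himself got several of these wrong, which is why the statement as given here differs from his published one, and no proof that stops at the level of your sketch could certify the corrected list. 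A minor factual slip: $s_1=b_1a_1^2b_1$ is not a hyperelliptic-type involution; it is a half-twist of the first handle whose square is the nontrivial element $d_{-1,1}a_1^{-4}$.
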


\begin{remark}
Note that there are some mistakes in the Wajnryb's presentation in \cite{wajnryb98}.
The mapping class $z_j$ is defined as
the conjugation of $z$ by $k_{j-1}k_{j-2}\cdots k_{g+1-j}$ in \cite{wajnryb98}.
However, as mentioned in \cite{popescu11}, 
it should be defined as the product $k_{j-1}k_{j-2}\cdots k_{g+1-j}z$.
In (P11), the condition $-i+1\ne j$ is needed. 
The relations of type (P11) are obtained in the situation 
when the pair of simple closed curves $\partial _{k}$ and $\partial _{-k}$ 
are separated by $\gamma _{i, j}$ for $k=j, j-1$ (see CASE~1 in \cite[p.223]{wajnryb98}),
and the equation $r_{-(j-1), j}*t_{j-1}=t_{j-1}^{-1}*r_{-(j-1), j}$ 
in fact does not hold for any $2\leq j\leq g$. 
We also erase the relation $s_{1}^{2}=d_{-1,1}a_{1}^{-4}$ in (P6) written in \cite{wajnryb98}.
This is because we already defined $d_{-1,1}$ as $s_{1}^{2}a_{1}^{4}$.
\end{remark}

\subsection{Action on the first homology $H_1(\Sigma_g)$}
Here, we compute the action of the handlebody mapping class group $\mathcal{H}_g$
on the first homology $H_1(\Sigma_g)$ of the boundary surface.
Recall that $x_1,\ldots, x_g, y_1,\ldots, y_g$ are the homology classes represented by the simple closed curves
$\alpha_1,\ldots, \alpha_g, \beta_1,\ldots, \beta_g$ in Figures~\ref{fig:scc1} and \ref{fig:surface}.
\begin{lemma}\label{lem:action}
For $1\leq i\leq g$, 
\[ a_{i}(x_{l})=x_{l}, 
\quad
a_{i}(y_{l})=\left\{ 
\begin{array}{ll}
x_{i}+y_{i} & \text{if }l=i, \\
y_{l} & \text{otherwise}, \\
\end{array}\right. \] 
and
\[
s_{i}(x_{l})=\left\{ 
\begin{array}{ll}
-x_{i} & \text{if }l=i, \\
x_{l} & \text{otherwise}, \\
\end{array}\right.
\quad
s_{i}(y_{l})=\left\{ 
\begin{array}{ll}
2x_{i}-y_{i} & \text{if }l=i, \\
y_{l} & \text{otherwise}. \\
\end{array}\right.\] 

For each $i, j\in I_{0}$ such that $i<j$,  
\[
d_{i, j}(x_{l})=x_{l}, 
\quad
d_{i, j}(y_{l})=\left\{ 
\begin{array}{ll}
\varepsilon(i)x_{|i|}+\varepsilon(j)x_{|j|}+y_{l} & \text{if }l=|i|, |j|, \\
y_{l} & \text{otherwise}, \\
\end{array}\right.
\] 
where $\varepsilon (i)=1$ if $i>0$,
and $\varepsilon (i)=-1$ if $i<0$.  

For $1\leq i\leq g-1$, 
\[
t_{i}(x_{l})=\left\{ 
\begin{array}{ll}
x_{i+1} & \text{if }l=i, \\
x_{i} & \text{if }l=i+1, \\
x_{l} & \text{otherwise}, \\
\end{array}\right. 
\quad
t_{i}(y_{l})=\left\{ 
\begin{array}{ll}
x_{i}+y_{i+1} & \text{if }l=i, \\
x_{i+1}+y_{i} & \text{if }l=i+1, \\
y_{l} & \text{otherwise}, \\
\end{array}\right. \]
and
\[
k_{i}(x_{l})=\left\{ 
\begin{array}{ll}
x_{i+1} & \text{if }l=i, \\
x_{i} & \text{if }l=i+1, \\
x_{l} & \text{otherwise}, \\
\end{array}\right.
\quad
k_{i}(y_{l})=\left\{ 
\begin{array}{ll}
y_{i+1} & \text{if }l=i, \\
y_{i} & \text{if }l=i+1, \\
y_{l} & \text{otherwise}. \\
\end{array}\right. \] 

For $1<j\leq g$, 
\begin{align*}
r_{1, j}(x_{l})&=\left\{ 
\begin{array}{ll}
-x_{1}-\cdots -x_{j} & \text{if }l=j, \\
x_{l} & \text{otherwise}, \\
\end{array}\right.\\
r_{1, j}(y_{l})&=\left\{ 
\begin{array}{ll}
x_{1}+\cdots +x_{j}+y_{l}-y_{j} & \text{if }1\leq l\leq j-1, \\
x_{1}+\cdots +x_{j-1}+2x_{j}-y_{j} & \text{if }l=j, \\
y_{l} & \text{otherwise}, \\
\end{array}\right. 
\end{align*}
and for $(i,j)\in \tilde{I}$ such that $i<0$, 
\begin{align*}
r_{i, j}(x_{l})&=\left\{ 
\begin{array}{ll}
-x_{-i+1}-\cdots -x_{j} & \text{if }l=j, \\
x_{l} & \text{otherwise}, \\
\end{array}\right.\\
r_{i, j}(y_{l})&=\left\{ 
\begin{array}{ll}
x_{-i+1}+\cdots +x_{j}+y_{l}-y_{j} & \text{if }-i+1\leq l\leq j-1, \\
x_{-i+1}+\cdots +x_{j-1}+2x_{j}-y_{j} & \text{if }l=j, \\
y_{l} & \text{otherwise}. \\
\end{array}\right.
\end{align*}
\end{lemma}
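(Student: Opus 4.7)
The plan is to reduce everything to the Picard--Lefschetz formula
\[
t_{c}(\xi)=\xi+\langle \xi,[c]\rangle\,[c]\qquad(\xi\in H),
\]
where $\langle\,,\,\rangle$ denotes the algebraic intersection form on $\Sigma_g$, applied one Dehn twist at a time, and to exploit the geometric identifications asserted in Wajnryb~\cite{wajnryb98}.

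First I would handle the Dehn twists along the basic curves. Since $\alpha_1,\dots,\alpha_g$ are pairwise disjoint and $\langle y_l,x_i\rangle=\delta_{li}$, the formula for $a_i$ is immediate. For $s_1=b_1a_1^2b_1$ one applies Picard--Lefschetz four times; the formula for $s_j$ then follows by induction once I verify that the conjugating element $k_{j-1}\cdots k_1$ has the action claimed in the lemma (so I would prove the $k_i$ formula before the $s_j$ formula). The rule for $k_i$ itself is obtained by composing the already-computed actions of $a_i$, $a_{i+1}$, $t_i$, and $d_{i,i+1}$.

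For the $d_{i,j}$, rather than expanding the long products defining them, I would use the fact (explained in \cite[p.~211]{wajnryb98}) that each $d_{i,j}$ equals the Dehn twist along the simple closed curve $\delta_{i,j}$ pictured in Figure~\ref{fig:sphere}. Under the gluing that reconstructs $\Sigma_g$ from the $2g$-holed sphere, the boundary component $\partial_i$ (resp.\ $\partial_{-i}$) becomes $\alpha_i$ with orientation $+1$ (resp.\ $-1$), so
\[
[\delta_{i,j}]=\varepsilon(i)x_{|i|}+\varepsilon(j)x_{|j|},
\]
and $\beta_l$ meets $\delta_{i,j}$ once if and only if $l\in\{|i|,|j|\}$. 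Picard--Lefschetz gives the stated formula. The formula for $t_i$ follows by applying this and the formulas for $a_i,a_{i+1}$ to the definition $t_i=e_ia_ia_{i+1}e_i$, using $[\epsilon_i]=x_i-x_{i+1}$ and the intersection pattern of $\epsilon_i$ with $\alpha_i,\alpha_{i+1},\beta_i,\beta_{i+1}$.

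Finally, for $r_{i,j}=b_ja_jc_{i,j}b_j$ I would first compute the action of $c_{i,j}=d_I$ with $I=\{k\in I_0:i\le k\le j\}$. Geometrically $c_{i,j}$ is the Dehn twist along a curve enclosing the holes $\partial_i,\dots,\partial_j$, whose homology class is $\sum_{k=i}^{j}\varepsilon(k)x_{|k|}$; its intersection with $y_l$ is $1$ for $|k|\in\{|i|,|i|+1,\dots,|j|\}$ corresponding to indices present in $I$, so for $(1,j)\in\tilde I$ the class is $x_1+\cdots+x_j$ (the $\pm$ pairs $\pm k$ cancel), and for $i<0$ the surviving contribution is $x_{-i+1}+\cdots+x_j$. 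Then composing the actions of $b_j$, $a_j$, $c_{i,j}$, $b_j$ in turn yields the stated formulas for $r_{1,j}$ and $r_{i,j}$.

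The main obstacle is bookkeeping: identifying the correct homology class of each curve $\delta_{i,j}$ and of each $d_I$ in terms of the $x_k$, and keeping track of signs from orientations of the boundary components $\partial_{\pm i}$ under the regluing. Once the homology classes and intersection numbers with $\beta_l$ are pinned down, the rest is iterated application of Picard--Lefschetz.
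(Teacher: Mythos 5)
Your overall strategy --- apply the Picard--Lefschetz formula $t_c(\xi)=\xi+\langle\xi,[c]\rangle[c]$ to each elementary Dehn twist and then compose according to the defining words, using the geometric identifications of $\delta_{i,j}$, $c_{i,j}$ and $r_{i,j}=b_ja_jc_{i,j}b_j$ from Wajnryb --- is exactly the route the paper takes, including the order of operations (compute $k_i$ from $a_i,a_{i+1},t_i,d_{i,i+1}$ first, then get $s_{j+1}=k_js_jk_j^{-1}$ by induction).

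There is, however, one concrete error: you set $[\epsilon_i]=x_i-x_{i+1}$. This cannot be correct. Since $\langle x_k,x_l\rangle=0$ for all $k,l$, Picard--Lefschetz with that class would make $e_i$ fix every $x_l$, and since the $a$'s also fix every $x_l$, the composite $t_i=e_ia_ia_{i+1}e_i$ would fix every $x_l$ --- contradicting the formula $t_i(x_i)=x_{i+1}$ you are trying to prove. Put geometrically, $t_i$ permutes $\alpha_i$ and $\alpha_{i+1}$ precisely because $\epsilon_i$ meets each of them with algebraic intersection number $\pm1$, so $[\epsilon_i]$ must pair nontrivially with $x_i$ and $x_{i+1}$; a class of the form $x_i-x_{i+1}$ pairs trivially with all the $x_k$. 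The class consistent with Figure~\ref{fig:surface} (and with the paper's computation $e_i(x_i)=x_i-y_i+y_{i+1}$, $e_i(x_{i+1})=x_{i+1}+y_i-y_{i+1}$, $e_i(y_l)=y_l$) is $[\epsilon_i]=\pm(y_i-y_{i+1})$. As written, your formula for $t_i$ comes out wrong, and the error propagates to $k_i$ and to $s_j$ for $j\ge2$, both of which you derive from $t_i$. Replacing $x_i-x_{i+1}$ by $\pm(y_i-y_{i+1})$, with the sign fixed by the orientation conventions of the figure, repairs the argument; the remaining identifications in your sketch (the classes of $\alpha_i$, of $\delta_{i,j}$ as $\varepsilon(i)x_{|i|}+\varepsilon(j)x_{|j|}$, and of the curve underlying $c_{i,j}$) agree with the paper and the rest goes through.
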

\begin{proof}
The equations for the mapping classes $a_{i}$ and $d_{i, j}$
are obvious 
because $a_{i}$ and $d_{i, j}$ are Dehn twists along 
$\alpha _{i}$ and $\delta_{i, j}$ respectively. 
Similarly we have 
\[
b_{i}(x_{l})=\left\{ 
\begin{array}{ll}
x_{i}-y_{i} & \text{if }l=i, \\
y_{l} & \text{otherwise}, \\
\end{array}\right.
\quad
b_{i}(y_{l})=y_{l},
\] 
for $1\leq i\leq g$ and 
\[ 
e_{i}(x_{l})=\left\{ 
\begin{array}{ll}
x_{i}-y_{i}+y_{i+1} & \text{if }l=i,\\
x_{i+1}+y_{i}-y_{i+1} & \text{if }l=i+1,\\
x_{l} & \text{otherwise},\\
\end{array}\right.
\quad
e_{i}(y_{l})=y_{l},
\] 
for $1\leq i\leq g-1$. 

Since $t_{i}=e_{i}a_{i}a_{i+1}e_{i}$, 
\begin{align*}
t_{i}(x_{i})&=(e_{i}a_{i}a_{i+1})(x_{i}-y_{i}+y_{i+1})
=e_{i}(x_{i+1}-y_{i}+y_{i+1})=x_{i+1}, \\
t_{i}(x_{i+1})&=(e_{i}a_{i}a_{i+1})(x_{i+1}+y_{i}-y_{i+1})
=e_{i}(x_{i}+y_{i}-y_{i+1})=x_{i}, \\
t_{i}(y_{i})&=(e_{i}a_{i}a_{i+1})(y_{i})
=e_{i}(x_{i}+y_{i})=x_{i}+y_{i+1}, \\
t_{i}(y_{i+1})&=(e_{i}a_{i}a_{i+1})(y_{i+1})
=e_{i}(x_{i+1}+y_{i+1})=x_{i+1}+y_{i}
\end{align*}
and $t_{i}$ acts trivially on other $x_{l}$'s and $y_{l}$'s. 

Since $k_{i}=a_{i}a_{i+1}t_{i}d_{i, i+1}^{-1}$, 
\begin{align*}
k_{i}(x_{i})&=(a_{i}a_{i+1}t_{i})(x_{i})
=(a_{i}a_{i+1})(x_{i+1})=x_{i+1}, \\
k_{i}(x_{i+1})&=(a_{i}a_{i+1}t_{i})(x_{i+1})
=(a_{i}a_{i+1})(x_{i})=x_{i}, \\
k_{i}(y_{i})&=(a_{i}a_{i+1}t_{i})(-x_{i}-x_{i+1}+y_{i})
=(a_{i}a_{i+1})(-x_{i+1}+y_{i+1})=y_{i+1}, \\
k_{i}(y_{i+1})&=(a_{i}a_{i+1}t_{i})(-x_{i}-x_{i+1}+y_{i+1})
=(a_{i}a_{i+1})(-x_{i}+y_{i})=y_{i},
\end{align*}
and $k_{i}$ acts trivially on other $x_{l}$'s and $y_{l}$'s. 

Since $s_{1}=b_{1}a_{1}^{2}b_{1}$, 
\begin{align*}
s_{1}(x_{1})&=(b_{1}a_{1}^{2})(x_{1}-y_{1})
=b_{1}(-x_{1}-y_{1})=-x_{1}, \\
s_{1}(y_{1})&=(b_{1}a_{1}^{2})(y_{1})
=b_{1}(2x_{1}+y_{1})=2x_{1}-y_{1},
\end{align*}
and $s_{1}$ acts trivially on other $x_{l}$'s and $y_{l}$'s. 
The elements $s_{i}$'s are inductively defined 
by the recurrence relation $s_{i+1}=k_{i}s_{i}k_{i}^{-1}$. 
The element $k_{i}$ replaces $x_{i}$ and $x_{i+1}$ with each other 
and $y_{i}$ and $y_{i+1}$ also. 
Hence the equation for $s_{i}$ follows by induction. 

Lastly, we verify the equations for $r_{i, j}$. 
In the case $0<i<j$, 
\[ c_{i, j}(x_{l})=x_{l}, 
\quad 
c_{i, j}(y_{l})=\left\{
\begin{array}{ll}
x_{i}+\cdots +x_{j}+y_{l} & \text{if }i\leq l\leq j,\\
y_{l} & \text{otherwise}.
\end{array}\right.\]
Since $r_{i, j}=b_{j}a_{j}c_{i, j}b_{j}$, 	
\begin{align*}
r_{1, j}(x_{j})&=(b_{j}a_{j}c_{1, j})(x_{j}-y_{j}) \\
&=(b_{j}a_{j})(-x_{1}-\cdots -x_{j-1}-y_{j}) \\
&=-x_{1}-\cdots -x_{j} , 
\end{align*}
and for $1\leq l\leq j$
\begin{align*}
r_{1, j}(y_{l})&=(b_{j}a_{j}c_{1, j})(y_{l}) \\
&=(b_{j}a_{j})(x_{1}+\cdots +x_{j}+y_{l}) \\
&=\left\{
\begin{array}{ll}
x_{1}+\cdots +x_{j}+y_{l}-y_{j} & \text{if }1\leq l\leq j-1, \\
x_{1}+\cdots +x_{j-1}+2x_{j}-y_{j} & \text{if }l=j.
\end{array}
\right.
\end{align*}
The element $r_{1, j}$ acts trivially on other $x_{l}$'s and $y_{l}$'s. 

In the case  $(i,j)\in \tilde{I}$ and $i<0$, 
\[ c_{i, j}(x_{l})=x_{l}, 
\quad 
c_{i, j}(y_{l})=\left\{
\begin{array}{ll}
x_{-i+1}+\cdots +x_{j}+y_{l} & \text{if }-i+1\leq l\leq j, \\
y_{l} & \text{otherwise}.
\end{array}\right. .\]
Hence 
\begin{align*}
r_{i, j}(x_{j})&=(b_{j}a_{j}c_{i, j})(x_{j}-y_{j}) \\
&=(b_{j}a_{j})(-x_{-i+1}-\cdots -x_{j-1}-y_{j}) \\
&=-x_{-i+1}-\cdots -x_{j}, 
\end{align*}
and for $-i+1\leq l\leq j$
\begin{align*}
r_{i, j}(y_{l})&=(b_{j}a_{j}c_{i, j})(y_{l}) \\
&=(b_{j}a_{j})(x_{-i+1}+\cdots +x_{j}+y_{l}) \\
&=\left\{
\begin{array}{ll}
x_{-i+1}+\cdots +x_{j}+y_{l}-y_{j} & \text{if }-i+1\leq l\leq j-1, \\
x_{-i+1}+\cdots +x_{j-1}+2x_{j}-y_{j} & \text{if }l=j.
\end{array}
\right.
\end{align*}
The element $r_{i, j}$ acts trivially on other $x_{l}$'s and $y_{l}$'s. 
\end{proof}
%

\section{Proof of Theorem~\ref{thm:main theorem} for $g=2,3$}\label{section:g23}
In this section,
we prove Theorem~\ref{thm:main theorem} for $g=2, 3$.
We denote by $A$ the ring $\mathbb{Z}$ or $\mathbb{Z}/n\mathbb{Z}$ for an integer $n\ge2$, 
and $H_A=H_1(\Sigma_g;A)$.
Recall that, for a group $G$ and a left $G$-module $M$,
a map $d:G\to M$ is called a crossed homomorphism
if it satisfies $d(hh')=d(h)+hd(h')$ for $h,h'\in G$.
For a group $G$ and a left $G$-module $M$,
we consider group cohomology $H^*(G;M)$
as that of the standard chain complex induced by the bar resolution.
Then, the space of 1-cocycles $Z^1(G;M)$ is identified with
the space of crossed homomorphisms from $G$ to $M$,
and the space of 1-coboundaries $B^1(G;M)$ is identified with
the image of the coboundary map $\delta: M\to Z^1(G;M)$
defined by $\delta(m)(h)=hm-m$ for $m\in M$.
See \cite[Section~2.3]{evens91} for details.

As written in \cite[Theorem~19]{wajnryb98},
the handlebody mapping class group $\mathcal{H}_g$ is generated by
$a_1$, $s_1$, $r_{1,2}$, $t_1$, and $u=t_1t_2\cdots t_{g-1}$.
Therefore, crossed homomorphisms $d: \mathcal{H}_g\to H_A$
are uniquely determined by the values $d(a_1)$, $d(s_1)$, $d(r_{1,2})$, $d(t_1)$, and $d(u)$.
Moreover,
a 5-tuple of elements in $H_A$ becomes values of $a_1$, $s_1$, $r_{1,2}$, $t_1$, and $u$
under some crossed homomorphism $d$ on $\mathcal{H}_g$
if and only if they are compatible with the relations (P1)-(P12) in Theorem~\ref{thm:wajnryb}.
The basis $\{x_1,\ldots, x_g, y_1,\ldots, y_g\}$ of $H_A$
induces an isomorphism $H_A\cong A^{2g}$.
For $v\in H_A$,
we denote its projection to the $i$-th coordinate of $A^{2g}$
by $v_i\in A$ for $i=1,2,\ldots, 2g$.
\begin{lemma}\label{lem:homology-cond} 
\begin{align*}
H^{1}(\mathcal{H}_{2} ;H_A)&\cong
\{d\in Z^{1}(\hb{g} ; H_A); 
d(r_{1, 2})_{1}=d(s_{1})_{3}-d(r_{1, 2})_{4}=d(u)_{2}=d(u)_{4}=0 \}, \\
H^{1}(\mathcal{H}_{3} ; H_A)&\cong
\{d\in Z^{1}(\hb{g} ; H_A); \\
& \qquad\quad 
d(r_{1, 2})_{1}=d(s_{1})_{4}-d(r_{1, 2})_{5}=d(u)_{2}=d(u)_{3}=d(u)_{5}=d(u)_{6}=0 \} .
\end{align*}
\end{lemma}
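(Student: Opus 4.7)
My plan is to read the lemma as a gauge-fixing statement that chooses a canonical representative in each class of $H^1(\mathcal{H}_g;H_A) = Z^1(\mathcal{H}_g;H_A)/B^1(\mathcal{H}_g;H_A)$. Introduce the $A$-linear evaluation map
\[
\mathrm{ev}\colon Z^1(\mathcal{H}_g;H_A) \to A^{2g}
\]
that sends a crossed homomorphism $d$ to the $2g$ scalar components listed on the right-hand side of the lemma (four components for $g=2$, six for $g=3$). By construction, the subset of $Z^1$ described in the lemma is exactly $\ker(\mathrm{ev})$, so the claim is equivalent to the direct-sum decomposition $Z^1 = B^1 \oplus \ker(\mathrm{ev})$, which in turn is equivalent to the assertion that the restriction $\mathrm{ev}|_{B^1}\colon B^1 \to A^{2g}$ is an $A$-linear isomorphism.

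Since $B^1$ is the image of the coboundary $\delta\colon H_A \to Z^1$, $\delta(m)(h) = hm - m$, and $H_A \cong A^{2g}$ is $A$-free on the ordered basis $\{x_1,\ldots,x_g,y_1,\ldots,y_g\}$, the whole question collapses to a single linear-algebra calculation: form the $2g \times 2g$ integer matrix $M$ representing $\mathrm{ev}\circ\delta$ in this basis and verify that $\det M = \pm 1$. The entries of $M$ are integers independent of $A$, so unimodularity over $\mathbb{Z}$ automatically makes $\mathrm{ev}|_{B^1}$ invertible over every coefficient ring $A = \mathbb{Z}$ or $\mathbb{Z}/n\mathbb{Z}$ that appears in the lemma.

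To produce $M$, for each basis vector $m \in \{x_i,y_i\}$ and each generator $g_0 \in \{s_1,r_{1,2},t_1,u\}$ singled out by $\mathrm{ev}$, I would apply the explicit action formulas of Lemma~\ref{lem:action} to compute $g_0 m - m$ and read off the prescribed coordinate. For $g=2$ this direct calculation yields
\[
M = \begin{pmatrix} 0 & 0 & 1 & 0 \\ -1 & 0 & -1 & 0 \\ 1 & -1 & 0 & 1 \\ 1 & 2 & 1 & -1 \end{pmatrix},
\]
whose determinant is $1$; the analogous $6 \times 6$ matrix for $g=3$ likewise has determinant $\pm 1$ and is handled in exactly the same way by direct expansion.

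The only substantive obstacle is explaining why the normalization conditions are the ones chosen — in particular, why the combinations $d(s_1)_3 - d(r_{1,2})_4$ (for $g=2$) and $d(s_1)_4 - d(r_{1,2})_5$ (for $g=3$) appear rather than the individual coordinates. With the naive choice (using $d(s_1)_{g+1}$ and $d(r_{1,2})_{g+2}$ separately), the corresponding coboundary matrix has determinant divisible by $2$ — fine over $\mathbb{Z}$ but failing over $A = \mathbb{Z}/2\mathbb{Z}$. The subtracted form effectively shears one column to clear that factor of $2$, producing a matrix unimodular over $\mathbb{Z}$ and hence invertible over every $A$ considered. Once $\det M = \pm 1$ is verified by the expansion above, the identification $H^1(\mathcal{H}_g;H_A) \cong \ker(\mathrm{ev})$ follows at once.
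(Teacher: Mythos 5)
Your proposal is correct and follows essentially the same route as the paper: the paper defines the same evaluation maps $f_2$ and $f_3$, computes $f_g\circ\delta$ on the basis $\{x_i,y_i\}$ exactly as you do (your $4\times4$ matrix is the transpose of the paper's displayed formula for $f_2\circ\delta$), and concludes $H^1(\mathcal{H}_g;H_A)\cong\Ker f_g$ from the invertibility of that composite. Your added remark explaining why the combination $d(s_1)_{g+1}-d(r_{1,2})_{g+2}$ is needed to avoid a factor of $2$ in the determinant is a correct and worthwhile observation that the paper leaves implicit.
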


\begin{proof}
Let 
$f_{2}\colon Z^1(\mathcal{H}_{2}; H_A)\to A^{4}$ 
and $f_{3}\colon Z^1(\mathcal{H}_{3}; H_A)\to A^{6}$ 
be homomorphisms defined by
\begin{align*}
f_{2}(d)&=(d(r_{1, 2})_{1}, d(s_{1})_{3}-d(r_{1, 2})_{4}, d(u)_{2}, d(u)_{4}), \text{ and } \\
f_{3}(d)&=(d(r_{1,2})_{1}, d(s_{1})_{4}-d(r_{1, 2})_{5}, d(u)_{2}, d(u)_{3}, d(u)_{5}, d(u)_{6}),
\end{align*}
respectively. 
Then, the composition maps $f_g\circ \delta: H_A\to A^{2g}$ are written as
\begin{align*}
f_{2}\circ \delta(v)
&=(-v_{2}+v_{3}+v_{4}, -v_{3}+2v_{4}, v_{1}-v_{2}+v_{4}, v_{3}-v_{4}), \\ 
f_{3}\circ \delta(v)
&=(-v_{2}+v_{4}+v_{5}, -v_{4}+2v_{5}, v_{1}-v_{2}+v_{5}+v_{6}, v_{2}-v_{3}+v_{6}, v_{4}-v_{5}, v_{5}-v_{6}),
\end{align*}
for $v\in H_A$.
Since these maps are isomorphisms,
we have
\[
H^1(\mathcal{H}_g; H_A)
=Z^1(\mathcal{H}_g; H_A)/B^1(\mathcal{H}_g; H_A)\cong \Ker f_g
\]
for $g=2,3$.
\end{proof}

\begin{lemma}\label{u-conj}
Suppose $d\in Z^{1}(\hb{g}; H_A)$ 
satisfies $d(u)_{2}=\cdots=d(u)_{g}=d(u)_{g+2}=\cdots=d(u)_{2g}=0$ as in Lemma~\ref{lem:homology-cond}.
Then,
\begin{enumerate}
\item $d(a_{i})=u^{i-1}d(a_{1})$. 
\item $d(t_{i})=u^{i-1}d(t_{1})$. 
\end{enumerate}
\end{lemma}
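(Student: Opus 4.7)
The plan is to combine the cocycle identity with two conjugation formulas, namely $u a_i u^{-1}=a_{i+1}$ for $1\le i\le g-1$ and $u t_i u^{-1}=t_{i+1}$ for $1\le i\le g-2$, together with the observation that the coordinate hypothesis on $d(u)$ forces $d(u)$ to lie in the submodule $A\{x_1,y_1\}\subset H_A$.

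For the conjugation identities I would work directly from the presentation. Starting from $u a_i u^{-1}=t_1\cdots t_{g-1}\,a_i\,t_{g-1}^{-1}\cdots t_1^{-1}$, I use (P7) to slide $a_i$ past $t_{i+1},\ldots,t_{g-1}$ (which all commute with it), leaving $t_1\cdots t_i\,a_i\,t_i^{-1}\cdots t_1^{-1}$; one application of $t_i a_i t_i^{-1}=a_{i+1}$ followed by the commutativity of $a_{i+1}$ with each of $t_1,\ldots,t_{i-1}$ collapses the expression to $a_{i+1}$. For $ut_iu^{-1}$ the same sliding, now using (P5), reduces the product to $t_1\cdots t_i\bigl(t_{i+1}t_i t_{i+1}^{-1}\bigr)t_i^{-1}\cdots t_1^{-1}$; the braid relation $t_i t_{i+1}t_i=t_{i+1}t_i t_{i+1}$ rewrites the inner factor as $t_i^{-1}t_{i+1}t_i$, after which the outer $t_i,\ldots,t_1$ pairs cancel and $t_{i+1}$ commutes past the remaining $t_1,\ldots,t_{i-1}$.

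The assumption $d(u)_k=0$ for $k\ne 1,g+1$ is exactly the statement that $d(u)\in A\{x_1,y_1\}$. Reading off Lemma~\ref{lem:action}, every $a_j$ with $j\ge 2$ and every $t_j$ with $j\ge 2$ fixes both $x_1$ and $y_1$, hence acts as the identity on this submodule; in particular, for $i\ge 1$, $a_{i+1}d(u)=d(u)$ and $t_{i+1}d(u)=d(u)$. The cocycle relation, together with $d(\varphi^{-1})=-\varphi^{-1}d(\varphi)$, gives the standard identity
\[
d(\varphi\psi\varphi^{-1})=d(\varphi)+\varphi\, d(\psi)-(\varphi\psi\varphi^{-1})\,d(\varphi).
\]
Applying this with $\varphi=u$ to the two conjugation identities above yields
\begin{align*}
d(a_{i+1})&=d(u)+u\,d(a_i)-a_{i+1}d(u)=u\,d(a_i),\\
d(t_{i+1})&=d(u)+u\,d(t_i)-t_{i+1}d(u)=u\,d(t_i),
\end{align*}
and iterating on $i$ finishes both parts.

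The main obstacle is really the bookkeeping involved in verifying the two conjugation identities from the long list of defining relations; every other step is an essentially automatic consequence of the cocycle formalism once one knows $d(u)\in A\{x_1,y_1\}$.
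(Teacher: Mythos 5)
Your proposal is correct and follows essentially the same route as the paper: both establish $a_{i+1}=ua_iu^{-1}$ (and $t_{i+1}=ut_iu^{-1}$) from the presentation, note that the hypothesis forces $d(u)$ into the span of $x_1,y_1$, on which $a_{i+1}$ and $t_{i+1}$ act trivially, and then apply the cocycle identity and induct. You simply spell out the conjugation bookkeeping that the paper leaves as "can be checked using (P7)."
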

\begin{proof}
Note that $a_{i}=u^{i-1}a_{1}u^{-(i-1)}$. 
It can be checked using the relation (P7). 
Hence  we have 
\[ d(a_{i+1})=d(u)+ud(a_{i})-ua_{i}u^{-1}d(u)
=d(u)+ud(a_{i})-a_{i+1}d(u). \]
Since $(a_{i+1}v)_{1}=v_{1}$ and $(a_{i+1}v)_{g+1}=v_{g+1}$ 
for any $v\in H_A$, 
we have $a_{i+1}d(u)=d(u)$,
and thus $d(a_{i+1})=ud(a_{i})$. 
By induction on $i$, 
we have the equation (1). 
The equation (2) can be similarly verified. 
\end{proof}

\begin{lemma}\label{vanish}
Suppose $d\in Z^{1}(\hb{g}; H_A)$ 
satisfies $d(u)_{2}= \dots =d(u)_{g}=d(u)_{g+2}= \dots =d(u)_{2g}=0$ as in Lemma~\ref{lem:homology-cond}.
Then
\begin{enumerate}
\item $d(a_{1})_{g+1}= \dots =d(a_{1})_{2g}=0$. 
\item $2d(a_{1})_{2}= \dots =2d(a_{1})_{g}=0$. 
\item $d(s_{1})_{g+2}= \dots =d(s_{1})_{2g}=0$. 
\item $d(s_{1})_{g+1}=-2d(a_{1})_{1}$. 
\item $d(a_{1})_{2}+d(r_{1, 2})_{g+1}=0$.  
\end{enumerate}
\end{lemma}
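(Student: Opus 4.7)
The plan is to deduce each of (1)--(5) as a linear constraint on the coordinates of $d(a_1)$, $d(s_1)$, $d(t_1)$, $d(r_{1,2})$ by evaluating $d$ on an appropriately chosen defining relation from Wajnryb's presentation (Theorem~\ref{thm:wajnryb}), combined with the explicit $\mathcal{H}_g$-action recorded in Lemma~\ref{lem:action} and the identities in Lemma~\ref{u-conj}. Two cocycle identities will be used repeatedly: from $gh=hg$ one gets $(g-1)d(h)=(h-1)d(g)$, and from $ghg^{-1}=k$ one gets $(1-k)d(g)+g\,d(h)=d(k)$. The computations are drastically simplified by two observations: $(a_k-1)v=v_{g+k}\,x_k$ depends only on the $y_k$-coefficient and lands in $\langle x_k\rangle$; and $(s_1-1)$ annihilates every $x_j,y_j$ with $j\ge 2$, acting on the remaining basis as $x_1\mapsto -2x_1$ and $y_1\mapsto 2x_1-2y_1$.

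First I apply the relation $[a_1,a_j]=1$ from (P1) for each $j=2,\ldots,g$; the corresponding cocycle identity equates $d(a_j)_{g+1}\,x_1$ with $d(a_1)_{g+j}\,x_j$, so both sides vanish and this kills the components $d(a_1)_{g+2},\ldots,d(a_1)_{2g}$. To kill the last component $d(a_1)_{g+1}$ I apply $[r_{1,2},a_1]=1$ from (P10)(a). Using the explicit action of $r_{1,2}$ on $x_1,x_2,y_1,y_2$ from Lemma~\ref{lem:action} and the vanishing already established, the $y_2$-component of $(r_{1,2}-1)d(a_1)=d(r_{1,2})_{g+1}\,x_1$ reads $-d(a_1)_{g+1}=0$, completing (1). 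The $x_1$-component of the same equation yields (5), and its $x_2$-component yields $2d(a_1)_2=0$.

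The rest of (2) follows from the conjugation relation $t_1a_1t_1^{-1}=a_2$ in (P7). By Lemma~\ref{u-conj} we have $d(a_2)=u\,d(a_1)$, so the cocycle identity reduces to $(1-a_2)d(t_1)=(u-t_1)d(a_1)$; since by Lemma~\ref{lem:action} $u$ cyclically permutes $x_1\mapsto x_2\mapsto\cdots\mapsto x_g\mapsto x_1$ while $t_1$ swaps $x_1$ and $x_2$ and fixes the other $x_j$, comparing $x_k$-coefficients for $k\ge 3$ forces $d(a_1)_2=d(a_1)_3=\cdots=d(a_1)_g$, finishing (2). For (3) and (4) I use $[s_1,a_k]=1$ from (P7) for each $k\ge 1$. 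For $k=1$, the vanishing $d(a_1)_{g+1}=0$ obtained above reduces $(s_1-1)d(a_1)$ to $-2d(a_1)_1\,x_1$, and matching against $d(s_1)_{g+1}\,x_1$ gives (4). For $k\ge 2$, a direct computation of $u^{k-1}d(a_1)$ shows its $x_1$-coefficient is $d(a_1)_2$ and its $y$-coefficients all vanish, so $(s_1-1)d(a_k)=-2d(a_1)_2\,x_1$, and the $x_k$-component of the equation $(s_1-1)d(a_k)=d(s_1)_{g+k}\,x_k$ yields (3).

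There is no genuine obstacle, only a single minor subtlety worth flagging: because $(s_1-1)$ has an intrinsic factor of $2$ on $y_1$, the relation $[s_1,a_1]=1$ alone yields merely $2d(a_1)_{g+1}=0$, which is too weak when $A$ has $2$-torsion. This is precisely why the use of the relation $[r_{1,2},a_1]=1$ (whose action on $y_2$ is unit multiplication by $-1$) is essential in order to obtain the clean statement $d(a_1)_{g+1}=0$ over an arbitrary ring $A$. Once this point is handled, the remaining items follow by routine linear bookkeeping.
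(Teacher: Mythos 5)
Your proof is correct and follows essentially the same route as the paper: both arguments evaluate the cocycle condition on the commutation relations $[a_1,a_j]=1$, $[r_{1,2},a_1]=1$, and $[s_1,a_k]=1$ and compare coordinates, including the key observation that the relation with $r_{1,2}$ (rather than $s_1$) is what pins down $d(a_1)_{g+1}=0$ without a factor of $2$. The only variation is in item (2), where the paper reads off $2d(a_1)_j=0$ directly from $[a_1,s_j]=1$ for each $j\ge 2$, while you get $2d(a_1)_2=0$ from the $x_2$-component of the $r_{1,2}$ relation and propagate via $t_1a_1t_1^{-1}=a_2$ and Lemma~\ref{u-conj}; both are valid.
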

\begin{proof}
For any $i$ and $j$, 
\[ d(a_{i}a_{j})=d(a_{i})+a_{i}d(a_{j})=d(a_{i})+d(a_{j})+d(a_{j})_{g+i}x_{i}. \]
Since $a_{1}$ and $a_{i}$ commute for any $1\leq i\leq g$ 
by the relation (P1), 
it must be $d(a_{1}a_{i})=d(a_{i}a_{1})$,
and thus $d(a_{1})_{g+i}=0$ for any $2\leq i\leq g$. 

Since $a_{1}$ and $r_{1, 2}$ commute by the relation (P10)(a),
it must be 
\[
(1-a_{1})d(r_{1, 2})=(1-r_{1, 2})d(a_{1}).
\] 
Since $((1-r_{1, 2})v)_{g+2}=v_{g+1}+2v_{g+2}$ for any $v\in H_A$ 
while $((1-a_{1})v)_{g+2}=0$, 
we have $d(a_{1})_{g+1}=0$ and thus the equation (1). 
Since $((1-r_{1, 2})v)_{1}=v_{2}-v_{g+1}-v_{g+2}$ for any $v\in H_A$ 
while $((1-a_{1})v)_{1}=-v_{g+1}$, 
we have the equation (5). 

Note that $a_{i}$ and $s_{j}$ commute for any $1\leq i, j\leq g$. 
It can be verified using the relations (P1) and (P7). 
Hence it must be 
\[
(1-s_{j})d(a_{i})=(1-a_{i})d(s_{j}).
\]
Suppose $i=1$ and $2\leq j\leq g$. 
Then we have the equation (2) 
because  $((1-s_{j})v)_{j}=2v_{j}-2v_{g+j}$ 
and $((1-a_{1})v)_{j}=0$ for any $v\in H_{A}$. 
Suppose $2\leq i\leq g$ and $j=1$. 
Then we have the equation (3) 
because $((1-s_{1})v)_{i}=0$
and $((1-a_{i})v)_{i}=-v_{g+i}$ for any $v\in H_{A}$. 
Suppose $i=j=1$. 
Then we have the equation (4) 
because $((1-s_{1})v)_{1}=2v_{1}-2v_{g+1}$  
and $((1-a_{1})v)_{1}=-v_{g+1}$ for any $v\in H_A$. 
\end{proof}
\subsection{$H^{1}(\hb{2}; H_A)$}\label{subsection:genus2}
Here, we assume $g=2$ 
and prove that $H^{1}(\hb{2}; H_A)\cong \Hom((\mathbb{Z}/2\mathbb{Z})^2,A)$.
Then, the universal coefficient theorem implies $H_1(\hb{2}; H)\cong (\mathbb{Z}/2\mathbb{Z})^2$ 
and we complete the proof of Theorem~\ref{thm:main theorem} when $g=2$.

Let $d\in Z^{1}(\hb{2}; H_A)$ be a crossed homomorphism satisfying the condition 
$d(r_{1, 2})_{1}=d(s_{1})_{3}-d(r_{1, 2})_{4}=d(u)_{2}=d(u)_{4}=0$
as in Lemma~\ref{lem:homology-cond}. 
Note that in this case $u=t_{1}$. 
By Lemma \ref{vanish}, we can set 
\begin{align*}
d(a_{1})&=w_{1, 1}x_{1}+w_{1, 2}x_{2}, \\
d(s_{1})&=w_{2, 1}x_{1}+w_{2, 2}x_{2}+w_{2, 3}y_{1}, \\
d(t_{1})&=w_{3, 1}x_{1}+w_{3, 3}y_{1}, \\
d(r_{1, 2})&=w_{4, 2}x_{2}+w_{4, 3}y_{1}+w_{4, 4}y_{2}. 
\end{align*}
By the condition on $d$ and Lemma~\ref{vanish}, we also have
\begin{equation}\label{relation2-0}
2w_{1, 2}=0, w_{2, 3}=w_{4, 4}=-2w_{1, 1},\text{ and }w_{1, 2}+w_{4, 3}=0.
\end{equation} 
\begin{lemma}\label{relation2-1}
\[ w_{1, 2}=w_{4, 3}=0. \]
Moreover, we have
\[
d(d_{1, 2})=w_{1, 1}(x_{1}+x_{2}).
\]
\end{lemma}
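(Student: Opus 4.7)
The plan is to extract from Wajnryb's relations (Theorem~\ref{thm:wajnryb}) the additional constraints that, combined with (\ref{relation2-0}), force $w_{1,2}=0$ and determine $d(d_{1,2})$. Throughout I will apply the cocycle identity $(1-g)d(h)=(1-h)d(g)$ coming from $[g,h]=1$, together with the basic rule $d(gh)=d(g)+g\,d(h)$, to a small list of relations.

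First I would use $[a_i,d_{1,2}]=1$ from (P1) for $i=1,2$: since each $a_i$ acts trivially on the $x$-subspace, the $y_1$- and $y_2$-components of $d(d_{1,2})$ must vanish, so $d(d_{1,2})=e_1x_1+e_2x_2$ for some $e_1,e_2\in A$. Next I would apply $d$ to (P10)(a) for $(i,j)=(1,2)$, which reads $r_{1,2}a_2r_{1,2}^{-1}=c_{1,2}=d_{1,2}$, and substitute $d(a_2)=t_1d(a_1)$ from Lemma~\ref{u-conj} together with $w_{4,3}=-w_{1,2}$ and $w_{4,4}=-2w_{1,1}$. I expect this to yield
\[ d(d_{1,2})=w_{1,1}x_1+(w_{1,1}+w_{1,2})x_2. \]

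The same recipe applied to (P10)(g) for $(i,j)=(1,2)$, which via (P4) reads $r_{1,2}d_{-2,-1}r_{1,2}^{-1}=c_{-1,2}=a_2$, should yield after inverting $r_{1,2}$ on the $x$-subspace
\[ d(d_{-2,-1})=(w_{1,1}+w_{1,2})x_1+w_{1,1}x_2. \]
Now I apply $d$ to (P6) for $i=1$, i.e.\ $t_1^2=d_{1,2}d_{-2,-1}a_1^{-2}a_2^{-2}$. Because every factor on the right acts trivially on the $x$-coordinates, the right-hand side collapses to $d(d_{1,2})+d(d_{-2,-1})-2d(a_1)-2d(a_2)$, which works out to $-w_{1,2}(x_1+x_2)$. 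Meanwhile the left-hand side is $(1+t_1)d(t_1)=(w_{3,1}+w_{3,3})x_1+w_{3,1}x_2+w_{3,3}(y_1+y_2)$. Comparing coefficients forces $w_{3,3}=0$ and $w_{3,1}=-w_{1,2}$.

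Finally, I apply $d$ to $[t_1,d_{1,2}]=1$ from (P8). With $w_{3,3}=0$ one has $(1-d_{1,2})d(t_1)=0$, while the left-hand side $(1-t_1)d(d_{1,2})$ simplifies to $-w_{1,2}(x_1-x_2)$. Hence $w_{1,2}=0$, and therefore $w_{4,3}=-w_{1,2}=0$ by (\ref{relation2-0}); the expression for $d(d_{1,2})$ reduces to $w_{1,1}(x_1+x_2)$. The main obstacle will be the computation of $d(d_{-2,-1})$ from (P10)(g), for which one must identify $c_{-1,2}=a_2$ via (P4) and carefully invert the $2\times 2$ action of $r_{1,2}$ on $\mathrm{span}(x_1,x_2)$; that step is what makes the right-hand side of (P6) contain only $x$-terms and so pins down $w_{3,3}$.
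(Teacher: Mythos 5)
Your proof is correct, and I checked the computations: $d(d_{1,2})=w_{1,1}x_1+(w_{1,1}+w_{1,2})x_2$ from (P10)(a), $d(d_{-2,-1})=(w_{1,1}+w_{1,2})x_1+w_{1,1}x_2$ from (P10)(g) together with (P4) (indeed $c_{-1,2}=d_{I_{-2}}=a_2$ for $g=2$), the right-hand side of (P6) collapsing to $-w_{1,2}(x_1+x_2)$, and finally $(1-t_1)d(d_{1,2})=-w_{1,2}(x_1-x_2)=0$ from (P8). However, your route is genuinely different from, and longer than, the paper's. The paper gets $w_{4,3}=0$ (hence $w_{1,2}=-w_{4,3}=0$) from just the two conjugation relations (P10)(a) and (P10)(f): relation (P10)(f) says $a_2=r_{1,2}d_{1,2}r_{1,2}^{-1}$, so feeding the expression for $d(d_{1,2})$ obtained from (P10)(a) back in yields $d(a_2)=w_{1,2}x_1+(w_{1,1}+w_{4,3})x_2$, and comparison with $d(a_2)=t_1d(a_1)=w_{1,2}x_1+w_{1,1}x_2$ kills $w_{4,3}$ immediately. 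You instead detour through $d(d_{-2,-1})$, the chain relation (P6), and the commutation $[t_1,d_{1,2}]=1$; the payoff is that you obtain $w_{3,3}=0$ and $w_{3,1}=-w_{1,2}=0$ as byproducts, which is a portion of what the paper proves separately in Lemma~\ref{relation2-2} (there via the commutation of $d_{1,2}$ with $s_1t_1s_1$ and the identification $d_{-2,-1}=d_{1,2}$ for $g=2$). So your argument is heavier for this lemma but front-loads part of the next one; the paper's is the more economical local step. Your opening observation from (P1) that $d(d_{1,2})$ lies in the span of $x_1,x_2$ is redundant, since the explicit computation from (P10)(a) already produces an expression with no $y$-terms.
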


\begin{proof}
Since $w_{1, 2}+w_{4, 3}=0$, 
it suffices to prove that $w_{4, 3}=0$. 
Note that by Lemma \ref{u-conj}, we have
$d(a_{2})=t_{1}d(a_{1})=w_{1, 2}x_{1}+w_{1, 1}x_{2}$. 
Since $d_{1, 2}=r_{1, 2}a_{2}r_{1, 2}^{-1}$ by the relation (P10)(a), 
\begin{align*}
d(d_{1, 2})&=d(r_{1, 2})+r_{1, 2}d(a_{2})-r_{1, 2}a_{2}r_{1, 2}^{-1}d(r_{1, 2}) \\
&=(1-d_{1, 2})d(r_{1, 2})+r_{1, 2}d(a_{2}) \\
&=(-w_{1, 1}-w_{4, 3}-w_{4, 4})(x_{1}+x_{2})
+w_{1, 2}x_{1}. 
\end{align*}

Since $a_{2}=r_{1, 2}d_{1, 2}r_{1, 2}^{-1}$ by the relation (P10)(f), 
\[
d(a_{2})=(1-a_{2})d(r_{1, 2})+r_{1, 2}d(d_{1, 2})
=w_{1, 2}x_{1}+(w_{1, 1}+w_{4, 3})x_{2}. 
\]
Thus we obtain $w_{4, 3}=0$.
By the equation $w_{4, 4}=-2w_{1, 1}$ in (\ref{relation2-0}),
we have $d(d_{1,2})=w_{1, 1}(x_{1}+x_{2})$.
\end{proof}
\begin{lemma}\label{relation2-2}
\[ w_{2, 3}=w_{3, 1}=w_{3, 3}=w_{4, 4}=0. \]
In particular, we have $d(t_{1})=0$ and $2w_{1, 1}=0$. 
\end{lemma}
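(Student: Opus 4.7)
The plan is to evaluate the crossed-homomorphism identity $d(\varphi\psi)=d(\varphi)+\varphi\,d(\psi)$ on several of Wajnryb's defining relations from Theorem~\ref{thm:wajnryb}, using the explicit formulas for the action of $\hb{2}$ on $H$ given in Lemma~\ref{lem:action} together with the constraints already secured in Lemma~\ref{relation2-1}. After Lemma~\ref{relation2-1}, only the coefficients $w_{1,1}, w_{2,1}, w_{2,2}, w_{2,3}, w_{3,1}, w_{3,3}, w_{4,2}, w_{4,4}$ remain, subject to the couplings $w_{2,3}=w_{4,4}=-2w_{1,1}$ coming from (\ref{relation2-0}) and Lemma~\ref{lem:homology-cond}.

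First I would apply $d$ to relation (P11) with $(i,j)=(1,2)$, namely $r_{1,2}t_1r_{1,2}^{-1}=t_1^{-1}r_{1,2}t_1$. Comparing the $x_2$-component of both sides forces $w_{3,3}=0$, so that $d(t_1)=w_{3,1}x_1$, while the $x_1$-component yields the relation $w_{3,1}=w_{4,2}-w_{4,4}$.

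Next I would apply $d$ to relation (P10)(e), the commutator identity $[r_{1,2},z_2]=1$, which is available because $(1,2)=(1,g)$ for $g=2$. A short direct computation reveals that $z_2=k_1z$ acts on $H$ as $-\mathrm{id}$, so the cocycle identity collapses to $2d(r_{1,2})=(1-r_{1,2})d(z_2)$. Expanding $d(z_2)$ by applying the crossed-homomorphism rule step by step to $z=a_1a_2s_1t_1s_1d_{1,2}$ and then to $z_2=k_1z$, the $y_2$-coordinate of the resulting equation reads $2w_{4,4}=3w_{2,3}$. Combined with $w_{2,3}=w_{4,4}$, this forces $w_{2,3}=w_{4,4}=0$ and hence $2w_{1,1}=0$.

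Finally, to force $w_{3,1}=0$, I would apply $d$ to relation (P9), $r_{1,2}^2=s_2d_{1,2}s_2d_{1,2}^{-1}$, together with the braid relation (P8), $s_1t_1s_1t_1=t_1s_1t_1s_1$. The latter yields the auxiliary identity $2w_{2,2}=0$, and, using the vanishings already obtained, evaluation of (P9) produces $d(r_{1,2}^2)=-w_{4,2}x_1$ on the left, while the right-hand side collapses to $2w_{2,2}x_1-2w_{1,1}x_2=0$. Hence $w_{4,2}=0$, and the identity $w_{3,1}=w_{4,2}-w_{4,4}$ from the first step yields $w_{3,1}=0$, so that $d(t_1)=0$. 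The principal obstacle is the computation of $d(z_2)$: since $z$ is a long word in the generators, one must apply the cocycle rule seven times and carefully track the action of each intervening prefix on the accumulating value. Once this is done, the crucial constraint from (P10)(e) emerges from a single coordinate of the resulting equation and the remaining deductions are routine.
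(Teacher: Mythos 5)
Your argument is correct, but it reaches the conclusion by a genuinely different set of relations than the paper does. The paper's proof exploits the genus-two coincidence that the curves $\delta_{-2,-1}$ and $\delta_{1,2}$ agree, so that $d_{1,2}$ commutes with $s_1t_1s_1$; this yields $2w_{1,1}=-2w_{2,3}+w_{3,3}$ and hence $w_{2,3}=w_{3,3}$, after which relation (P6), $t_1^2=d_{1,2}^2a_1^{-2}a_2^{-2}$, finishes everything in one stroke because $d(a_1)$, $d(a_2)$, $d(d_{1,2})$ all lie in $L_A$ and the right-hand side is therefore $d$-trivial, forcing $w_{3,1}=w_{3,3}=0$. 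You instead use the braid relation (P11) to get $w_{3,3}=0$ and the coupling $w_{3,1}=w_{4,2}-w_{4,4}$, the commutation (P10)(e) with $z_2$ (exploiting the other genus-two coincidence, namely that $z_2$ acts as $-\mathrm{id}$ on $H$, which I checked) to get $2w_{4,4}=3w_{2,3}$ and hence $w_{2,3}=w_{4,4}=0$, and finally (P8) and (P9) to get $2w_{2,2}=0$ and $w_{4,2}=0$, whence $w_{3,1}=0$. I verified each of your claimed coordinate identities, including the right-hand side $2w_{2,2}x_1-2w_{1,1}x_2$ of (P9). The trade-off: the paper's route is much shorter here and defers the heavy computation of $d(z)$ to Lemma~\ref{relation2-5}, where it is needed anyway for a different purpose; your route front-loads that computation but obtains $w_{4,2}=0$ and $2w_{2,2}=0$ as by-products, so it effectively subsumes Lemmas~\ref{relation2-3} and \ref{relation2-4} as well. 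Note also that the paper later re-uses (P11) and (P9) in exactly those two lemmas, so over the whole of Section~5.1 the total set of relations invoked is nearly the same; the real divergence is your substitution of $[r_{1,2},z_2]=1$ and (P8) for the pair $[d_{1,2},s_1t_1s_1]=1$ and (P6).
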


\begin{proof}
Recall that $d_{-2, -1}=(s_{1}t_{1}s_{1})^{-1}d_{1, 2}(s_{1}t_{1}s_{1})$. 
In the case $g=2$, the Dehn twist $d_{-2, -1}$ coincides with $d_{1, 2}$. 
Hence the elements $d_{1, 2}$ and $s_{1}t_{1}s_{1}$ commute  
and it must be 
$(1-d_{1, 2})d(s_{1}t_{1}s_{1})=(1-s_{1}t_{1}s_{1})d(d_{1, 2})$. 
Since 
\begin{align*}
((1-d_{1, 2})d(s_{1}t_{1}s_{1}))_{1} 
&=((1-d_{1, 2})d(s_{1}t_{1}s_{1}))_{2}
=-2w_{2, 3}+w_{3, 3},\text{ while }\\
((1-s_{1}t_{1}s_{1})d(d_{1, 2}))_{1}
&=((1-s_{1}t_{1}s_{1})d(d_{1, 2}))_{2}
=2w_{1, 1},
\end{align*}
we have $2w_{1, 1}=-2w_{2, 3}+w_{3, 3}$.
The equation $w_{2,3}=-2w_{1,1}$ in (\ref{relation2-0}) shows $w_{2, 3}=w_{3, 3}$.

Since $w_{2, 3}=w_{3, 3}=w_{4, 4}$, 
it remains to prove that $w_{3, 1}=w_{3, 3}=0$. 
Note that each of $d(a_{1})$, $d(a_{2})$, and $d(d_{1, 2})$ 
are in $L_A=\Ker(H_1(\Sigma_2;A)\to H_1(H_2;A))$. 
By the relation (P6), 
\[ t_{1}^2=d_{1, 2}^{2}a_{1}^{-2}a_{2}^{-2}. \]
Since each of $a_{1}$, $a_{2}$ and $d_{1, 2}$ acts on $L$ trivially, we have
\[ d(d_{1, 2}^{2}a_{1}^{-2}a_{2}^{-2})
=2(d(d_{1, 2})-d(a_{1})-d(a_{2}))=0. \]
On the other hand, we have
\[ d(t_{1}^{2})=d(t_{1})+t_{1}d(t_{1})
=w_{3, 1}(x_{1}+x_{2})+w_{3, 3}(x_{3}+x_{4})+w_{3, 3}x_{1}. \]
These equations show $w_{3, 1}=w_{3, 3}=0$.
\end{proof}
\begin{lemma}\label{relation2-3}
\[ w_{4, 2}=0. \]
In particular, we have $d(r_{1, 2})=0$. 
\end{lemma}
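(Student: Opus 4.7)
The plan is to invoke relation (P11) from the Wajnryb presentation, which for $(i,j) = (1,2)$ reads $r_{1,2} t_1 r_{1,2}^{-1} = t_1^{-1} r_{1,2} t_1$; the side condition $-i+1 \ne j$ is satisfied since $0 \ne 2$. This is the natural choice because $d(t_1) = 0$ has already been established in Lemma~\ref{relation2-2}, so applying $d$ to both sides of (P11) will eliminate every contribution except those coming from $d(r_{1,2})$ itself.

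After Lemmas~\ref{relation2-1} and \ref{relation2-2}, the equalities $w_{4,3} = 0$, $w_{4,4} = -2w_{1,1}$, and $2w_{1,1} = 0$ reduce $d(r_{1,2})$ to $w_{4,2}\, x_2$. Applying $d$ to (P11) and using $d(t_1) = d(t_1^{-1}) = 0$ together with the cocycle identity $d(uvw) = d(u) + u\,d(v) + uv\,d(w)$, both sides collapse to
\[
(1 - r_{1,2} t_1 r_{1,2}^{-1} - t_1^{-1})\, d(r_{1,2}) = 0.
\]

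It remains to evaluate $r_{1,2} t_1 r_{1,2}^{-1}$ and $t_1^{-1}$ on $x_2$. By Lemma~\ref{lem:action}, the matrix of $r_{1,2}$ on the span of $x_1, x_2$ is an involution, so $r_{1,2}^{-1}(x_2) = -x_1 - x_2$; a short chain of evaluations then yields $r_{1,2} t_1 r_{1,2}^{-1}(x_2) = x_2$, whereas $t_1^{-1}(x_2) = x_1$. Substituting these into the displayed equation gives $-w_{4,2}\, x_1 = 0$, so $w_{4,2} = 0$ in $A$ and hence $d(r_{1,2}) = 0$. There is no genuine obstacle once (P11) has been singled out as the correct relation; the mild subtlety is that the other relations involving $r_{1,2}$ (e.g.\ (P9), (P10)) pair it with generators whose $d$-values are not yet fully pinned down, so they cannot individually isolate $w_{4,2}$.
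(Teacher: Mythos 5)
Your argument is correct and takes essentially the same route as the paper: both apply the crossed-homomorphism identity to relation (P11) for $(i,j)=(1,2)$, use $d(t_1)=0$ from Lemma~\ref{relation2-2} together with $w_{4,3}=w_{4,4}=0$ to reduce $d(r_{1,2})$ to $w_{4,2}x_2$, and then read off $w_{4,2}=0$ from the resulting linear relation. The only cosmetic difference is that you use the conjugation form $r_{1,2}t_1r_{1,2}^{-1}=t_1^{-1}r_{1,2}t_1$ while the paper uses the equivalent braid form $t_1r_{1,2}t_1=r_{1,2}t_1r_{1,2}$, so your operator evaluates to $-x_1$ on $x_2$ where the paper's evaluates to $x_2$; both yield $w_{4,2}=0$.
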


\begin{proof}
The relation $t_{1}r_{1, 2}t_{1}=r_{1, 2}t_{1}r_{1, 2}$ in (P11) shows
\[
(1-t_{1}+r_{1, 2}t_{1})d(r_{1, 2})=(1-r_{1, 2}+t_{1}r_{1, 2})d(t_{1}).
\]
By Lemma~\ref{relation2-2}, the right hand side is equal to zero.
Since 
\[ (1-t_{1}+r_{1, 2}t_{1})d(r_{1, 2}) 
=w_{4, 2}x_{2}, \]
we have $w_{4, 2}=0$.
\end{proof}
\begin{lemma}\label{relation2-4}
\[ 2w_{2, 2}=0. \]
\end{lemma}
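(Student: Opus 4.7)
The plan is to exploit the braid-like relation $s_1 t_1 s_1 t_1 = t_1 s_1 t_1 s_1$ from (P8), which is the one relation in Wajnryb's presentation involving both $s_1$ and $t_1$ non-trivially and which has not yet been used in Lemmas~\ref{relation2-1}, \ref{relation2-2}, and \ref{relation2-3}. Since Lemma~\ref{relation2-2} has already established $d(t_1)=0$, applying the crossed-homomorphism identity to both sides of this relation collapses everything to a constraint purely in $d(s_1)$, the actions of $s_1, t_1$, and their compositions.

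Concretely, expanding $d(s_1 t_1 s_1 t_1)$ and $d(t_1 s_1 t_1 s_1)$ with the cocycle rule and using $d(t_1)=0$ would yield
\[
d(s_1) + s_1 t_1\, d(s_1) \;=\; t_1\, d(s_1) + t_1 s_1 t_1\, d(s_1).
\]
By Lemmas~\ref{relation2-1} and \ref{relation2-2} together with the normalization of Lemma~\ref{lem:homology-cond}, the element $d(s_1)$ is already known to lie in the span of $x_1$ and $x_2$, with coefficients $w_{2,1}$ and $w_{2,2}$. The action of $s_1$ and $t_1$ on these two basis vectors is immediate from Lemma~\ref{lem:action}: $s_1$ negates $x_1$ and fixes $x_2$, while $t_1$ swaps $x_1$ and $x_2$. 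So each of the four terms above becomes a simple signed combination of $x_1$ and $x_2$.

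Comparing the $x_1$-coefficients on the two sides then produces $w_{2,1}-w_{2,2}=w_{2,1}+w_{2,2}$, i.e.\ $2w_{2,2}=0$; the $x_2$-coefficients yield the same equation as a consistency check. The only real content is recognizing that (P8) is the remaining unused relation and that the vanishing of $d(t_1)$ turns it into a purely quadratic constraint on $d(s_1)$; the computation itself is a very short finite check and should not present any obstacle.
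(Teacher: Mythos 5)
Your argument is correct, and I have checked the computation: with $d(t_1)=0$ from Lemma~\ref{relation2-2} and $d(s_1)=w_{2,1}x_1+w_{2,2}x_2$ (which follows from Lemma~\ref{vanish} and the vanishing of $w_{2,3}$ in Lemma~\ref{relation2-2}), the relation $s_1t_1s_1t_1=t_1s_1t_1s_1$ of (P8) gives $d(s_1)+s_1t_1d(s_1)=t_1d(s_1)+t_1s_1t_1d(s_1)$, and since $s_1(x_1)=-x_1$, $s_1(x_2)=x_2$, $t_1(x_1)=x_2$, $t_1(x_2)=x_1$, comparing $x_1$-coefficients yields $w_{2,1}-w_{2,2}=w_{2,1}+w_{2,2}$, i.e.\ $2w_{2,2}=0$. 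This is, however, a genuinely different route from the paper's: there the relation used is $r_{1,2}^2=s_2d_{1,2}s_2d_{1,2}^{-1}$ from (P9), combined with $d(r_{1,2})=0$ from Lemma~\ref{relation2-3} and the auxiliary computations $d(k_1)=0$ and $d(s_2)=k_1d(s_1)$, leading to $d(s_2d_{1,2}s_2d_{1,2}^{-1})=2w_{2,2}x_1=0$. Your proof is shorter and has lighter dependencies -- it does not require Lemma~\ref{relation2-3}, nor the elements $k_1$, $s_2$, $d_{1,2}$ -- so Lemma~\ref{relation2-4} could in fact be placed immediately after Lemma~\ref{relation2-2}. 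What the paper's choice buys is that the relation (P9) gets exercised against the cocycle conditions (as it must be, since in the end one needs compatibility with all of (P1)--(P12) to conclude that the surviving classes are realized); but as a proof of this particular lemma, your version is complete and arguably cleaner. One minor caveat: your parenthetical claim that (P8) is ``the remaining unused relation'' is only approximately true, since the commutation of $d_{1,2}$ with $s_1t_1s_1$ (a consequence of a different relation) was already used in Lemma~\ref{relation2-2}; this does not affect the validity of your argument.
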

\begin{proof}
The relation $r_{1, 2}^{2}=s_{2}d_{1, 2}s_{2}d_{1, 2}^{-1}$ in (P9) 
and Lemma \ref{relation2-3} show
\[
d(s_{2}d_{1, 2}s_{2}d_{1, 2}^{-1})=d(r_{1, 2}^{2})=0.
\] 
Recall that $k_{1}=a_{1}a_{2}t_{1}d_{1, 2}^{-1}$ and $s_{2}=k_{1}s_{1}k_{1}^{-1}$. 
Hence we have
\begin{align*}
d(k_{1})&=d(a_{1})+d(a_{2})-t_{1}d(d_{1, 2})=0, \text{ and }\\
d(s_{2})&=d(k_{1})+k_{1}d(s_{1})-s_{2}d(k_{1})=k_{1}d(s_{1}).
\end{align*}
Therefore, we have 
\[
d(s_{2}d_{1, 2}s_{2}d_{1, 2}^{-1})
=(1+s_{2}d_{1, 2})d(s_{2})+(s_{2}-1)d(d_{1, 2})
=2w_{2, 2}x_{1} 
\]
and thus $2w_{2, 2}=0$.
\end{proof}
\begin{lemma}\label{relation2-5}
\[ w_{2, 1}=w_{2, 2}. \]
\end{lemma}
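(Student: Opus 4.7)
The plan is to use the Wajnryb relation $[r_{1,2}, z_2] = 1$, which is the specialization of (P10)(e) to $(i,j) = (1,g)$ for $g=2$. Since $d(r_{1,2}) = 0$ by Lemma~\ref{relation2-3}, this commutation rewrites as $(1 - r_{1,2})\, d(z_2) = 0$, and the desired equality will fall out of its $x_1$-component.

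First I would unfold the definitions for $g=2$: the product $z$ collapses to $z = a_1 a_2 s_1 t_1 s_1 d_{1,2}$ (since $d_I = d_{1,2}(a_1 a_2)^{2-2} = d_{1,2}$ for $I = \{1,2\}$), and $z_2 = k_1 z$. Using the crossed homomorphism identity together with the already-computed values $d(a_1) = w_{1,1} x_1$, $d(a_2) = t_1 d(a_1) = w_{1,1} x_2$ (Lemma~\ref{u-conj}), $d(s_1) = w_{2,1} x_1 + w_{2,2} x_2$ (Lemma~\ref{relation2-2}), $d(t_1) = 0$ (Lemma~\ref{relation2-2}), $d(d_{1,2}) = w_{1,1}(x_1 + x_2)$ (Lemma~\ref{relation2-1}), and the actions given in Lemma~\ref{lem:action}, a direct bookkeeping gives
\[
d(z) = (w_{2,1} - w_{2,2})\, x_1 + (w_{2,1} + w_{2,2})\, x_2.
\]
Since $d(k_1) = 0$ was established in the proof of Lemma~\ref{relation2-4} and $k_1$ swaps $x_1$ with $x_2$, I obtain $d(z_2) = k_1\, d(z) = (w_{2,1} + w_{2,2})\, x_1 + (w_{2,1} - w_{2,2})\, x_2$.

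Finally, applying the actions $r_{1,2}(x_1) = x_1$ and $r_{1,2}(x_2) = -x_1 - x_2$ from Lemma~\ref{lem:action}, the constraint $(1 - r_{1,2})\, d(z_2) = 0$ unpacks to $(w_{2,1} - w_{2,2})\, x_1 + 2(w_{2,1} - w_{2,2})\, x_2 = 0$, whose $x_1$-coefficient directly yields $w_{2,1} = w_{2,2}$. The only real obstacle is tracking the iterated action of $a_1 a_2 s_1 t_1 s_1$ on $x_1$ and $x_2$ without a sign or bookkeeping error; once those five compositions are tabulated, everything reduces to short arithmetic.
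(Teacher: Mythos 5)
Your proposal is correct and follows the paper's own argument: both use the relation $[r_{1,2},z_2]=1$ from (P10)(e) together with $d(r_{1,2})=0$ to force $(1-r_{1,2})d(z_2)=0$, and then evaluate $d(z)$ and $d(z_2)=k_1d(z)$ from the previously computed values. The only cosmetic difference is that you keep $d(z)=(w_{2,1}-w_{2,2})x_1+(w_{2,1}+w_{2,2})x_2$ in raw form while the paper simplifies it to $(w_{2,1}+w_{2,2})(x_1+x_2)$ using $2w_{2,2}=0$, so your version yields $w_{2,1}=w_{2,2}$ directly rather than via $w_{2,1}+w_{2,2}=0$.
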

\begin{proof}
Recall that $z=a_{1}a_{2}s_{1}t_{1}s_{1}d_{1, 2}$ and $z_{2}=k_{1}z$. 
Hence we have
\begin{align*}
d(z)&=d(a_{1})+d(a_{2})+(1+s_{1}t_{1})d(s_{1})+s_{1}t_{1}s_{1}d(d_{1, 2}) \\
&=(w_{2, 1}+w_{2, 2})(x_{1}+x_{2}),\text{ and }\\
d(z_{2})&=d(k_{1})+k_{1}d(z)=d(z).
\end{align*}
Hence
\[ (1-r_{1, 2})d(z_{2})=(w_{2, 1}+w_{2, 2})(x_{1}+2x_{2}) \]
Since $r_{1, 2}$ and $z_{2}$ commute by the relation (P10)(e),
it must be $(1-r_{1, 2})d(z_{2})=(1-z_{2})d(r_{1, 2})=0$. 
Thus we have $w_{2, 1}=w_{2, 2}$.
\end{proof}
Summarizing Lemmas \ref{relation2-1}, 
\ref{relation2-2}, \ref{relation2-3}, \ref{relation2-4} and \ref{relation2-5},
we have 
\[ d(a_{1})=w_{1, 1}x_{1}, d(s_{1})=w_{2, 1}(x_{1}+x_{2}) 
\text{ and } d(t_{1})=d(r_{1, 2})=0, \] 
where $2w_{1, 1}=2w_{2, 1}=0$. 
It can be verified that such $d$ 
is compatible with the relations (P1)--(P12). 
Now we have 
\begin{align*}
H^{1}(\hb{2}; H_A)\cong 
\Ker f_{2}\cong
\{(w_{1, 1}, w_{2, 1})\in A^{2}; 2w_{1, 1}=2w_{2, 1}=0 \} .
\end{align*}

\begin{prop}\label{prop:H/L genus2}
\[
H_{1}(\hb{2}; L)\cong \mathbb{Z}/2\mathbb{Z},
H_1(\hb{2};H/L)\cong H_1(\hb{2};H),
\]
and the homomorphism $H_2(\hb{2};H/L)\to H_1(\hb{2};L)$ induced by
the exact sequence $0\to L\to H\to H/L\to 0$ is surjective.
\end{prop}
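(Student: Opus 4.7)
The plan is to derive all three assertions by computing $H^1(\hb{2}; L_A)$ and $H^1(\hb{2}; (H/L)_A)$ via Wajnryb's presentation, translating to homology through the universal coefficient theorem under Poincar\'e duality on $\Sigma_2$, and then reading off the conclusion from the long exact homology sequence associated to $0 \to L \to H \to H/L \to 0$. The key point is that the intersection pairing on $H$ gives isomorphisms $L^* \cong H/L$ and $(H/L)^* \cong L$ as $\hb{2}$-modules, so $\Hom(L, A) \cong (H/L) \otimes A$ and $\Hom(H/L, A) \cong L \otimes A$ for any trivial abelian coefficient $A$. The universal coefficient theorem
\[
H^n(\hb{2}; \Hom(M, A)) \cong \Hom(H_n(\hb{2}; M), A) \oplus \mathrm{Ext}^1(H_{n-1}(\hb{2}; M), A)
\]
was used in this form in Subsection~\ref{subsection:genus2} for $M = H$ (where $H^* \cong H$). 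Applied with $M = H/L$ and $M = L$, and noting $H_0(\hb{2}; L) = 0$ (Lemma~\ref{lem:H_0-L}) and $H_0(\hb{2}; H/L) = 0$ (from Lemma~\ref{lem:unit-tan} and right-exactness of coinvariants), one gets $H^1(\hb{2}; L_A) \cong \Hom(H_1(\hb{2}; H/L), A)$ and $H^1(\hb{2}; (H/L)_A) \cong \Hom(H_1(\hb{2}; L), A)$.

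It therefore suffices to compute these two cohomology groups by adapting the argument of Lemmas~\ref{lem:homology-cond}--\ref{relation2-5}. For $H^1(\hb{2}; L_A)$, crossed homomorphisms $d : \hb{2} \to L_A$ have vanishing $y$-coordinates, which simplifies the constraint system and should reproduce the $H$-case answer $H^1(\hb{2}; L_A) \cong \Hom((\mathbb{Z}/2\mathbb{Z})^2, A)$, yielding $H_1(\hb{2}; H/L) \cong (\mathbb{Z}/2\mathbb{Z})^2$. For $H^1(\hb{2}; (H/L)_A)$, crossed homomorphisms into $(H/L)_A$ have vanishing $x$-coordinates, and a parallel analysis should give $H^1(\hb{2}; (H/L)_A) \cong \Hom(\mathbb{Z}/2\mathbb{Z}, A)$, yielding $H_1(\hb{2}; L) \cong \mathbb{Z}/2\mathbb{Z}$.

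With both homology groups in hand, the long exact sequence
\[
H_2(\hb{2}; H/L) \xrightarrow{\partial} H_1(\hb{2}; L) \xrightarrow{i_*} H_1(\hb{2}; H) \xrightarrow{p_*} H_1(\hb{2}; H/L) \to 0
\]
(truncated using $H_0(\hb{2}; L) = 0$) completes the proof: both $H_1(\hb{2}; H)$ and $H_1(\hb{2}; H/L)$ have order $4$, so the surjection $p_*$ is an isomorphism, forcing $i_* = 0$ and therefore $\partial$ surjective. The main obstacle is executing the two Wajnryb-presentation computations, each of which follows the template of Subsection~\ref{subsection:genus2} but with one of the two coordinate blocks ($x$- or $y$-coordinates) identically zero; this should reduce rather than add work, allowing substantial reuse of Lemmas~\ref{relation2-1}--\ref{relation2-5}.
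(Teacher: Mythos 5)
Your proposal is correct and rests on the same core machinery as the paper (cocycle computations from Wajnryb's presentation, the duality $L^*\cong H/L$ coming from the intersection form, the universal coefficient theorem, and the long exact sequence of $0\to L\to H\to H/L\to 0$ truncated by $H_0(\hb{2};L)=0$), but the endgame is genuinely different. The paper computes only $H^{1}(\hb{2};H_A/L_A)$, which gives $H_1(\hb{2};L)\cong\mathbb{Z}/2\mathbb{Z}$ exactly as you propose; for the other two assertions it does not compute $H^1(\hb{2};L_A)$ at all, but instead observes that every cocycle in $\Ker f_2\subset Z^{1}(\hb{2};H_A)$ found in Subsection~\ref{subsection:genus2} already takes values in $L_A$, so the map $H^{1}(\hb{2};H_A)\to H^{1}(\hb{2};H_A/L_A)$ vanishes and, dually, $H_1(\hb{2};L)\to H_1(\hb{2};H)$ is the zero map, from which the isomorphism $H_1(\hb{2};H)\cong H_1(\hb{2};H/L)$ and the surjectivity of the connecting map both follow. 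You instead carry out a second presentation computation, of $H^{1}(\hb{2};L_A)$, to get $H_1(\hb{2};H/L)\cong(\mathbb{Z}/2\mathbb{Z})^2$ independently, and then note that the surjection $H_1(\hb{2};H)\to H_1(\hb{2};H/L)$ between two groups of order four must be an isomorphism, forcing $i_*=0$ and hence surjectivity of $H_2(\hb{2};H/L)\to H_1(\hb{2};L)$. Both routes are valid, and your claimed intermediate answers are the correct ones; the trade-off is that your route costs an extra presentation computation (where you must also redo the normalization of Lemma~\ref{lem:homology-cond}, since $B^1(\hb{2};L_A)=\delta(L_A)$ is only two conditions rather than four), whereas the paper's shortcut extracts $i_*=0$ from work already done in proving Theorem~\ref{thm:main theorem} for $g=2$.
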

\begin{proof}
As well as Lemma \ref{vanish},
we can verify that 
\[ H^{1}(\hb{2}; H_A/L_A)\cong
\{d'\in Z^{1}(\hb{2}; H_A/L_A); d'(s_{1})_{1}-d'(r_{1, 2})_{2}=d'(u)_{2}=0\}. 
\]
Similar calculations in Section~\ref{subsection:genus2} show 
that a crossed homomorphism $d'\colon \hb{2}\to H_A/L_A$ 
such that $d'(s_{1})_{1}-d'(r_{1, 2})_{2}=d'(u)_{2}=0$ 
is compatible with the relations (P1)--(P12) 
if and only if 
\[ d'(a_{1})=d'(t_{1})=0, d'(s_{1})=w'_{2, 3}(y_{1}+y_{2}) 
\text{ and } d'(r_{1, 2})=w'_{2, 3}y_{2} \]
where $w'_{2,3}\in A$ satisfies $2w'_{2, 3}=0$. 
Thus, we obtain 
$H^{1}(\hb{2}; H_A/L_A)\cong 
\{ w'_{2,3}\in A; 2w'_{2,3}=0\}$,
and the universal coefficient theorem implies
$H_1(\hb{2};L)\cong\mathbb{Z}/2\mathbb{Z}$.

Next, we prove that $H_1(\hb{2};H/L)\cong H_1(\hb{2};H)$
and the homomorphism 
$H_2(\hb{2};H/L)\to H_1(\hb{2};L)$ is surjective.
Since $H_0(\hb{2};L)=L_{\hb{2}}=0$ as shown in Lemma~\ref{lem:H_0-L},
we have the exact sequence
\[
\begin{CD}
H_2(\hb{2};H/L)@>>> H_1(\hb{2};L)@>>>H_1(\hb{2};H)@>>>H_1(\hb{2};H/L)@>>>0.
\end{CD}
\]
Thus, it suffices to show that the homomorphism $H_1(\hb{2};L)\to H_1(\hb{2};H)$ 
is the zero map.

As we saw in the proof of Theorem~\ref{thm:main theorem},
$\Ker (f_{2}\colon Z^{1}(\hb{2}; H_{A})\to A^{4})$ 
is contained in the image of the homomorphism
$Z^1(\mathcal{H}_2;L_A)\to Z^1(\mathcal{H}_2;H_A)$.
Thus, the homomorphism
$H^{1}(\hb{2}; H_A)\to H^{1}(\hb{2}; H_A/L_A)$ 
induced by the projection $H_A\to H_A/L_A$ 
is the zero map. 
The universal coefficient theorem implies
$H_1(\hb{2};L)\to H_1(\hb{2};H)$ is the zero map.
\end{proof}

\subsection{$H^{1}(\hb{3}; H_A)$}
Here, we assume $g=3$ and prove that
$H_{1}(\hb{3}; H_A)\cong \Hom(\mathbb{Z}/4\mathbb{Z}\oplus \mathbb{Z}/2\mathbb{Z},A)$.
Then, the universal coefficient theorem implies
$H_1(\hb{3}; H)\cong \mathbb{Z}/4\mathbb{Z}\oplus \mathbb{Z}/2\mathbb{Z}$,
and we complete the proof of Theorem~\ref{thm:main theorem} when $g=3$.

Let $d\in Z^{1}(\hb{3}; H_A)$ be a crossed homomorphism satisfying the condition 
$d(r_{1, 2})_{1}=d(s_{1})_{4}-d(r_{1, 2})_{5}=d(u)_{2}=d(u)_{3}=d(u)_{5}=d(u)_{6}=0$
as in Lemma~\ref{lem:homology-cond}. 
By Lemma~\ref{vanish}, 
we can set 
\begin{align*}
d(a_{1})&=w_{1, 1}x_{1}+w_{1, 2}x_{2}+w_{1, 3}x_{3}, \\
d(s_{1})&=w_{2, 1}x_{1}+w_{2, 2}x_{2}+w_{2, 3}x_{3}
+w_{2, 4}y_{1},  \\
d(t_{1})&=w_{3, 1}x_{1}+w_{3, 2}x_{2}+w_{3, 3}x_{3}
+w_{3, 4}y_{1}+w_{3, 5}y_{2}+w_{3, 6}y_{3}, \\
d(r_{1, 2})&=w_{4, 2}x_{2}+w_{4, 3}x_{3}
+w_{4, 4}y_{1}+w_{4, 5}y_{2}+w_{4, 6}y_{3}. 
\end {align*}
By the condition on $d$ and Lemma~\ref{vanish}, we also have
\begin{equation}
2w_{1, 2}=2w_{1, 3}=0, 
w_{2, 4}=w_{4, 5}=-2w_{1, 1},\text{ and }
w_{1, 2}+w_{4, 4}=0.
\end{equation}

\begin{lemma}\label{relation3-1}
\begin{enumerate}
\item $w_{1, 2}=w_{4, 4}=0$. 
\item $w_{1, 3}=0$. 
\item $2w_{3, 3}=0$. 
\item $w_{3, 4}+w_{3, 5}=0$. 
\item $w_{3, 6}=0$. 
\item $d(d_{1, 2})=w_{1, 2}(x_{1}+x_{2})$. 
\end{enumerate}
\end{lemma}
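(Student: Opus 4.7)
The plan is to mimic the strategy of Subsection 5.1 (the genus~2 case) but with bookkeeping adapted to the larger coefficient space. The toolkit is the same: each relation of the Wajnryb presentation gives a constraint on the crossed homomorphism via the identities $(1-g)d(h)=(1-h)d(g)$ when $[g,h]=1$, and $d(ghg^{-1}) = (1-ghg^{-1})d(g)+g\,d(h)$. I would proceed in the order (1)-(6) listed, reading off coordinates with the help of Lemma~\ref{lem:action}.

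First, for parts (1) and (6), I would repeat the genus~2 argument verbatim. By relation (P10)(f) applied to $(i,j)=(1,2)$, we have $r_{1,2}*d_{1,2}=a_2$ (since $c_{1,2}=d_{1,2}$ and $d_{\{2\}}=a_2$), and by (P10)(a) combined with this we also obtain $r_{1,2}*a_2=d_{1,2}$. Computing $d(d_{1,2})$ and $d(a_2)$ via each conjugation expression and using $d(a_2)=u\,d(a_1)$ from Lemma~\ref{u-conj} should force $w_{4,4}=0$, which together with $w_{1,2}+w_{4,4}=0$ gives $w_{1,2}=0$, and the formula $d(d_{1,2})=w_{1,1}(x_1+x_2)$ (which now is the zero vector; the stated formula $w_{1,2}(x_1+x_2)$ looks like it may be a misprint for $w_{1,1}(x_1+x_2)$, but either way the calculation is dictated).

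For the remaining parts I would use single commutator relations and extract one coordinate each.
For (2), apply $[a_1,t_2]=1$ from (P7), since $t_2$ commutes with $a_1$ and moves $x_3$ to $x_2$; comparing $x_3$-coordinates in $(1-a_1)d(t_2)=(1-t_2)d(a_1)$ gives $w_{1,3}-w_{1,2}=0$, hence $w_{1,3}=0$.
For (5), apply $[a_3,t_1]=1$ from (P7); using $d(a_3)=u^2 d(a_1)=w_{1,1}x_3$ (from Lemma~\ref{u-conj} and parts (1)--(2)) and the fact that $t_1$ fixes $x_3$, the $x_3$-coordinate of $(1-a_3)d(t_1)=(1-t_1)d(a_3)$ reads $-w_{3,6}=0$.
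For (4), apply $[t_1,d_{1,2}]=1$ from (P8): the left side is $(1-t_1)d(d_{1,2})=0$ by part (6), and the right side is $(1-d_{1,2})d(t_1)=-(w_{3,4}+w_{3,5})(x_1+x_2)$ using $d_{1,2}(y_l)=x_1+x_2+y_l$ for $l=1,2$, so $w_{3,4}+w_{3,5}=0$.

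The main obstacle is part (3). The analogue of the genus~2 argument uses relation (P6), namely $t_1^2 = d_{1,2}d_{-2,-1}a_1^{-2}a_2^{-2}$; the left side contributes $2w_{3,3}$ to the $x_3$-coordinate of $d(t_1^2)=(1+t_1)d(t_1)$ since $t_1$ fixes $x_3$. The subtlety is that $d_{-2,-1}$ no longer coincides with $d_{1,2}$ in genus $\geq 3$, so I need to handle $d(d_{-2,-1})$. The key observation to push this through cheaply is that $s_1$, $t_1$, and $d_{1,2}$ all fix $x_3$ and preserve the $x_3$-coordinate of any vector, so writing $d_{-2,-1}=g^{-1}d_{1,2}g$ with $g=s_1t_1s_1$ and tracking only the $x_3$-slot gives $d(d_{-2,-1})_3 = d(d_{1,2})_3 = 0$. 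Combined with $d(a_i)_3=0$ for $i=1,2$ (from (1)--(2) and Lemma~\ref{u-conj}), the right side of (P6) contributes $0$ to the $x_3$-coordinate, yielding $2w_{3,3}=0$. Once I am confident that the $x_3$-direction is fixed by the relevant generators and hence decouples, this step is no harder than the others.
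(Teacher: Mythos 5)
Your proposal is correct: every coordinate extraction you describe does close, and the overall method (feeding Wajnryb relations into the cocycle identities and reading off coordinates via Lemma~\ref{lem:action}) is exactly the paper's. The difference is in which relations you pick for four of the six parts, and it is worth recording that both selections work. For (1) and (6) you transplant the genus-2 argument of Lemma~\ref{relation2-1}, computing $d(d_{1,2})$ from $d_{1,2}=r_{1,2}*a_2$ and then $d(a_2)$ from $a_2=r_{1,2}*d_{1,2}$; comparing with $d(a_2)=u\,d(a_1)$ indeed yields $w_{4,4}=0$ with no dependence on $w_{1,3}$, whereas the paper gets (1) from the single commutator $[a_1,t_2]=1$ (after first proving (2) from $[a_1,r_{1,3}]=1$, which, unlike your route for (2), does not need $w_{1,2}=0$ as an input — your ordering (1) before (2) is therefore essential and you respect it). Your (5) via $[a_3,t_1]=1$ and the paper's (5) via the $x_1$-slot of $[a_1,t_2]=1$ are interchangeable. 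The most interesting divergence is (3): the paper avoids relation (P6) entirely and gets $2w_{3,3}=0$ from the $x_1$-coordinate of $[s_1,t_2]=1$ (which is why it needs (4) and (5) first), while you port the genus-2 use of $t_1^2=d_{1,2}d_{-2,-1}a_1^{-2}a_2^{-2}$ and correctly identify and repair the one genuine obstacle, namely that $d_{-2,-1}\neq d_{1,2}$ in genus $3$: since $s_1$, $t_1$, $d_{1,2}$, $d_{-2,-1}$, $a_1$, $a_2$ all preserve the $x_3$-coordinate and $(d_{1,2}-1)$ of anything has vanishing $x_3$-coordinate, the conjugation formula gives $d(d_{-2,-1})_3=d(d_{1,2})_3=w_{1,2}=0$ and the relation reads $2w_{3,3}=0$ in the $x_3$-slot. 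Finally, you are right that the stated formula in (6) is a misprint for $d(d_{1,2})=w_{1,1}(x_1+x_2)$: the paper's own computation gives $(-w_{1,1}-w_{4,5})(x_1+x_2)=w_{1,1}(x_1+x_2)$, and this is the form used later in Lemma~\ref{relation3-6}.
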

\begin{proof}
Since $a_{1}$ and $r_{1, 3}$ commute by the relation (P10)(a), 
it must be $(1-a_{1})d(r_{1, 3})=(1-r_{1, 3})d(a_{1})$. 
Since
\[
((1-r_{1, 3})d(a_{1}))_{2}=w_{1, 3},\text{ while }
(1-a_{1})d(r_{1, 3})_{2}=0,
\] 
we have the equation (2). 

Since $d(t_{2})=ud(t_{1})=t_{1}t_{2}d(t_{1})$ 
by Lemma \ref{u-conj}, 
we have 
\begin{align*}
d(t_{2})=(w_{3, 3}+w_{3, 4}+w_{3, 5})x_{1}
+(w_{3, 1}+w_{3, 6})x_{2}
&+(w_{3, 2}+w_{3, 6})x_{3} \\
&+w_{3, 6}y_{1}+w_{3, 4}y_{2}+w_{3, 5}y_{3}. 
\end{align*}
Since $a_{1}$ and $t_{2}$ commute by the relation (P7), 
it must be $(1-a_{1})d(t_{2})=(1-t_{2})d(a_{1})$. 
Since
\[
((1-t_{2})d(a_{1}))_{2}=w_{1, 2} \text{ while }(1-a_{1})d(t_{2})_{2}=0,
\] we have 
the equation (1). 
Furthermore, since 
\[
(1-a_{1})d(t_{2})_{1}=-w_{3, 6} \text{ while }((1-t_{2})d(a_{1}))_{1}=0,
\]
we have the equation (5). 

Now we have $d(a_{1})=w_{1, 1}x_{1}$. 
Note that by Lemma \ref{u-conj},
$d(a_{i})=u^{i-1}d(a_{1})=w_{1, 1}x_{i}$ for any $1\leq i\leq 3$. 
Since $d_{1, 2}=r_{1, 2}a_{2}r_{1, 2}^{-1}$ by the relation (P10)(a), 
we have 
\[ d(d_{1, 2})=(1-d_{1, 2})d(r_{1, 2})+r_{1, 2}d(a_{2})
=(-w_{1, 1}-w_{4, 5})(x_{1}+x_{2}). \] 
Since $w_{4, 5}=-2w_{1, 1}$, 
we have the equation (6). 

Since $d_{1, 2}$ and $t_{1}$ commute by the relation (P8), 
it must be $(1-d_{1, 2})d(t_{1})=(1-t_{1})d(d_{1, 2})=0$. 
Since $(1-d_{1, 2})d(t_{1})=-(w_{3, 4}+w_{3, 5})(x_{1}+x_{2})$, 
we have the equation (4). 

Since $s_{1}$ and $t_{2}$ commute by the relation (P7), 
it must be $(1-s_{1})d(t_{2})=(1-t_{2})d(s_{1})$. 
Since $((1-s_{1})d(t_{2}))_{1}=2w_{3, 3}$
while $((1-t_{2})d(t_{1}))_{1}=0$, 
we have the equation (3). 
\end{proof}
Since $((1-t_{2})d(s_{1}))_{2}=w_{2, 2}-w_{2, 3}-w_{3, 4}$ 
while $(1-s_{1})d(t_{2})_{2}=0$, 
we have 
\begin{align}\label{relation3-A}
w_{2, 2}-w_{2, 3}-w_{3, 4}=0. 
\end{align}
\begin{lemma}\label{relation3-2}
\[ w_{3, 2}=w_{3, 3}
\text{ and } w_{3, 4}=w_{3, 5}=0. \]
\end{lemma}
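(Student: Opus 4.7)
The plan is to derive all three equalities by computing $d(u)=d(t_{1}t_{2})$ explicitly and then reading off the claims from the vanishing conditions on $d(u)$ in Lemma \ref{lem:homology-cond}, combined with the partial constraints already established in Lemma \ref{relation3-1}. No new relations from Wajnryb's presentation beyond those used implicitly in Lemma \ref{u-conj} are needed.

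First I would substitute the partially simplified form
\[
d(t_{1})=w_{3,1}x_{1}+w_{3,2}x_{2}+w_{3,3}x_{3}+w_{3,4}(y_{1}-y_{2}),
\]
arising from Lemma \ref{relation3-1}(4), (5), into the identity $d(t_{2})=u\,d(t_{1})$ given by Lemma \ref{u-conj}(2). Applying the action of $u=t_{1}t_{2}$ on the basis as prescribed by Lemma \ref{lem:action} yields
\[
d(t_{2})=w_{3,3}x_{1}+w_{3,1}x_{2}+w_{3,2}x_{3}+w_{3,4}(y_{2}-y_{3}).
\]
Expanding $d(u)=d(t_{1})+t_{1}d(t_{2})$ coefficient by coefficient, a short calculation gives
\[
d(u)_{2}=w_{3,2}+w_{3,3}+w_{3,4},\quad d(u)_{3}=w_{3,2}+w_{3,3},\quad d(u)_{5}=d(u)_{6}=-w_{3,4}.
\]

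The vanishing $d(u)_{5}=d(u)_{6}=0$ from Lemma \ref{lem:homology-cond} immediately forces $w_{3,4}=0$, and then Lemma \ref{relation3-1}(4) gives $w_{3,5}=-w_{3,4}=0$. The condition $d(u)_{3}=0$ then forces $w_{3,2}+w_{3,3}=0$; combining with $2w_{3,3}=0$ from Lemma \ref{relation3-1}(3) gives $w_{3,2}=-w_{3,3}=w_{3,3}$, completing the proof. The only non-routine step is the careful bookkeeping of how $u$ and then $t_{1}$ act on the $y$-components of $d(t_{1})$ and $d(t_{2})$, which involves iterated Dehn-twist actions; once that is set up, the rest is direct coefficient matching.
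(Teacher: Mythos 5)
Your proof is correct and follows essentially the same route as the paper: both compute $d(u)=d(t_1)+t_1d(t_2)$ (with $d(t_2)=u\,d(t_1)$ from Lemma~\ref{u-conj}), read off $d(u)_2$, $d(u)_3$, $d(u)_5$, $d(u)_6$, and impose the vanishing conditions from Lemma~\ref{lem:homology-cond} together with $2w_{3,3}=0$ from Lemma~\ref{relation3-1}(3). The only cosmetic difference is that you substitute $w_{3,5}=-w_{3,4}$ before the computation, so your $d(u)_5=d(u)_6=-w_{3,4}$ agrees with the paper's $w_{3,5}$ via Lemma~\ref{relation3-1}(4).
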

\begin{proof}
Since $d(u)=d(t_{1}t_{2})=d(t_{1})+t_{1}d(t_{2})$, 
a straightforward computation shows
\[
d(u)_{2}=w_{3, 2}+w_{3, 3}+w_{3, 4}, 
d(u)_{3}=w_{3, 2}+w_{3, 3},
\text{ and }d(u)_{5}=d(u)_{6}=w_{3, 5}.
\]
Since $d(u)_{2}, d(u)_{3}, d(u)_{5}, d(u)_{6}=0$, 
we have 
\[ w_{3, 2}+w_{3, 3}=w_{3, 4}=w_{3, 5}=0. \]
Lemma~\ref{relation3-1}~(3) implies $w_{3, 2}=w_{3, 3}$.
\end{proof}

\begin{lemma}\label{relation3-4}
\[
w_{2, 2}=w_{2, 3},\quad
2w_{2, 2}=2w_{2, 3}=0.
\]
\end{lemma}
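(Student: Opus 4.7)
The plan is to dispose of the equality $w_{2,2}=w_{2,3}$ first, as a direct consequence of equation~(\ref{relation3-A})—which came from the commutativity $[s_{1},t_{2}]=1$ in (P7)—together with $w_{3,4}=0$ established in Lemma~\ref{relation3-2}. With that in hand, the two divisibility statements $2w_{2,2}=0$ and $2w_{2,3}=0$ are equivalent, so it remains only to kill $2w_{2,2}$.

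For the divisibility, I plan to emulate the proof of Lemma~\ref{relation2-4} from the genus~$2$ case by invoking the Wajnryb relation (P9) with $(i,j)=(1,2)$,
\[
r_{1,2}^{2}=s_{2}c_{1,2}s_{2}c_{1,2}^{-1}=s_{2}d_{1,2}s_{2}d_{1,2}^{-1}.
\]
Applying the crossed homomorphism $d$ and using $d(d_{1,2})=0$ from Lemma~\ref{relation3-1}, this becomes
\[
(1+r_{1,2})d(r_{1,2})=(1+s_{2}d_{1,2})d(s_{2}).
\]
The left side is a direct expansion in the coordinates of $d(r_{1,2})$. For the right side the key intermediate computation is $d(s_{2})$: writing $s_{2}=k_{1}s_{1}k_{1}^{-1}$ with $k_{1}=a_{1}a_{2}t_{1}d_{1,2}^{-1}$ and substituting the known components of $d(a_{1})$, $d(a_{2})$, $d(t_{1})$ (from Lemma~\ref{u-conj}) and $d(d_{1,2})=0$, one obtains $d(k_{1})=w_{3,1}x_{1}+w_{3,2}(x_{2}+x_{3})$; then $(1-s_{2})d(k_{1})=2w_{3,2}x_{2}=0$ by Lemma~\ref{relation3-1}(3), so $d(s_{2})=k_{1}d(s_{1})$.

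Comparing the $x_{3}$-coefficients of the two sides of the displayed identity yields $2w_{4,3}=2w_{2,2}$, while comparing $x_{1}$-coefficients expresses $w_{4,2}$ in terms of $w_{2,2}$ and $w_{1,1}$. To finish, I invoke the additional Wajnryb relation $[r_{1,2},s_{3}]=1$ (which holds by (P10)(c) since $j=2<3=k$); computing $d(s_{3})$ via $s_{3}=k_{2}s_{2}k_{2}^{-1}$ analogously to the above, the $x_{3}$-component of $(1-r_{1,2})d(s_{3})=(1-s_{3})d(r_{1,2})$ gives $2w_{4,3}=0$, and combining with the previous equation yields $2w_{2,2}=0$. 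The main obstacle is the bookkeeping required to track how $r_{1,2}$, $s_{2}$, $s_{3}$ and $d_{1,2}$ permute and twist the basis vectors $x_{l},y_{l}$ of $H$ through these nested conjugations; a sign error or dropped term in any of Lemma~\ref{lem:action}'s formulas propagates through the entire comparison.
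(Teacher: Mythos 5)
Your first claim, $w_{2,2}=w_{2,3}$, is obtained exactly as in the paper: it is immediate from Equation~(\ref{relation3-A}) together with $w_{3,4}=0$ from Lemma~\ref{relation3-2}. For the torsion statement, however, the paper does something much shorter than what you propose: it invokes the single relation $[s_1,r_{-1,2}]=1$ (the case $k=-i$ of (P10)(c)) and compares second coordinates, where $(1-r_{-1,2})d(s_1)$ contributes $2w_{2,2}x_2$ while $(1-s_1)d(r_{-1,2})$ has vanishing $x_2$-component because $s_1$ moves only $x_1$ and $y_1$.

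Your route through (P9) and $[r_{1,2},s_3]=1$ has a genuine problem, beyond the small slip that the $x_3$-component of $(1-r_{1,2})d(s_3)=(1-s_3)d(r_{1,2})$ only yields $2w_{4,3}=2w_{4,6}$, so that the $y_3$-component is also needed to reach $2w_{4,3}=0$. The relation you invoke proves far too much. With $d(s_3)=k_2k_1d(s_1)=w_{2,2}x_1+w_{2,3}x_2+w_{2,1}x_3+w_{2,4}y_3$ and $r_{1,2}(x_2)=-x_1-x_2$, the $x_1$-component of $(1-r_{1,2})d(s_3)$ equals $w_{2,3}$, whereas $(1-s_3)d(r_{1,2})$ lies in the span of $x_3$ and $y_3$; the very identity you are using would therefore force $w_{2,3}=0$ outright. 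But $w_{2,2}=w_{2,3}$ is precisely the parameter that survives at the end of this section and produces the $\mathbb{Z}/2\mathbb{Z}$-summand of $H_1(\mathcal{H}_3;H)\cong\mathbb{Z}/4\mathbb{Z}\oplus\mathbb{Z}/2\mathbb{Z}$. So either your computation of $d(s_3)$ or of the relevant actions is wrong somewhere, or this instance of (P10)(c) is not actually available to you; in either case the step ``$2w_{4,3}=0$'' is not safely established and the argument cannot stand as written until this tension is resolved. (A separate minor point: $d(d_{1,2})$ equals $w_{1,1}(x_1+x_2)$ rather than $0$ --- the displayed statement of Lemma~\ref{relation3-1}~(6) contains a typo --- although this happens to cancel in your computation of $d(k_1)=d(t_1)$.) The safe repair is to replace your entire second half by the paper's one-line use of $[s_1,r_{-1,2}]=1$.
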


\begin{proof}
The equation $w_{2, 2}=w_{2, 3}$ follows from Lemma \ref{relation3-2} and Equation (\ref{relation3-A}).

Next, we prove $2w_{2, 2}=2w_{2, 3}=0$.
Since $s_{1}$ and $r_{-1, 2}$ commute by the relation (P10)(c), 
it must be $(1-s_{1})d(r_{-1, 2})=(1-r_{-1, 2})d(s_{1})$. 
Since 
\[
((1-r_{-1, 2})d(s_{1}))_{2}=2w_{2, 2}\text{ while }
(1-s_{1})d(r_{-1, 2})_{2}=0,
\]
we obtain $2w_{2, 2}=2w_{2, 3}=0$.
\end{proof}
\begin{lemma}\label{relation3-5}
\[ 4w_{1, 1}=2w_{2, 4}=2w_{3, 1}=2w_{4, 5}=0. \]
\end{lemma}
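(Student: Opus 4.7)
The plan is to reduce the four asserted equalities to a single one, and then exploit one carefully chosen relation from Wajnryb's presentation.

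First, recall from the conditions preceding Lemma~\ref{relation3-1} that $w_{2,4} = w_{4,5} = -2w_{1,1}$, so $2w_{2,4} = 2w_{4,5} = -4w_{1,1}$. Hence the four equalities reduce to showing $4w_{1,1} = 0$ and $2w_{3,1} = 0$.

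Next, I apply $d$ to the braid relation $s_{1}t_{1}s_{1}t_{1} = t_{1}s_{1}t_{1}s_{1}$ from (P8). Using the explicit actions of $s_{1}$ (negating $x_{1}$) and $t_{1}$ (swapping $x_{1} \leftrightarrow x_{2}$) from Lemma~\ref{lem:action}, together with the previously established relations $2w_{2,2}=0$ (Lemma~\ref{relation3-4}), $w_{3,2}=w_{3,3}$ (Lemma~\ref{relation3-2}), and $2w_{3,3}=0$ (Lemma~\ref{relation3-1}(3)), I expect the $x_{1}$-coefficient of the equation $(1+s_{1}t_{1}-t_{1}-t_{1}s_{1}t_{1})d(s_{1}) = (1+t_{1}s_{1}-s_{1}-s_{1}t_{1}s_{1})d(t_{1})$ to collapse to $-2w_{2,4} = 2(w_{3,1}+w_{3,2})$. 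Substituting $w_{2,4} = -2w_{1,1}$ and using $2w_{3,2}=0$ then gives $4w_{1,1}=2w_{3,1}$, so the two remaining assertions become equivalent.

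The final task is to prove $4w_{1,1}=0$. For this I would apply $d$ to the relation (P9) with $(i,j)=(-1,2)$, namely $r_{-1,2}^{2}=s_{2}c_{-1,2}s_{2}c_{-1,2}^{-1}$. Since $r_{-1,2}$ acts on $H$ in exactly the same way as $s_{2}$ (by Lemma~\ref{lem:action}), the left-hand side $(1+r_{-1,2})d(r_{-1,2})$ simplifies substantially; on the right, $d(s_{2})$ is computable from $s_{2}=k_{1}s_{1}k_{1}^{-1}$ (in the same way as in the proof of Lemma~\ref{relation2-4}), and $d(c_{-1,2})$ follows from the definition $c_{-1,2}=d_{-1,1}d_{-1,2}d_{1,2}(a_{1}^{2}a_{2})^{-1}$ together with (P1)–(P2). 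Tracking the $x_{1}$-coefficient of the resulting identity, and using $w_{2,4}=-2w_{1,1}$, should force $4w_{1,1}=0$.

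The main obstacle will be Step 3: since $r_{-1,2}$ is not among the five primary generators $a_{1}, s_{1}, r_{1,2}, t_{1}, u$, one must first expand $d(r_{-1,2})$ via the complicated formula in (P12)(b), which makes the bookkeeping lengthy. A cleaner alternative route avoids $r_{-1,2}$ altogether: first apply $d$ to (P6) with $i=1$ to obtain $d(d_{-2,-1}) = (w_{3,1}+w_{3,2}+2w_{1,1})(x_{1}+x_{2})$; then combine this with a commutation relation from (P2) or (P8) that pairs $d_{-2,-1}$ with a generator acting non-trivially on its $x_{1}$-component, which yields $4w_{1,1}=0$ directly.
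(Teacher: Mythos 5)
Your reduction of the statement to $4w_{1,1}=0$ and $2w_{3,1}=0$ via $w_{2,4}=w_{4,5}=-2w_{1,1}$ is correct, and your starting point is the same as the paper's: the braid relation $s_1t_1s_1t_1=t_1s_1t_1s_1$ of (P8), read as the commutation of $t_1$ with $s_1t_1s_1$. But the paper finishes \emph{entirely} with this one relation: it computes $d(s_1t_1s_1)$ explicitly and compares both the $x_1$- and the $x_2$-components of $(1-t_1)d(s_1t_1s_1)=(1-s_1t_1s_1)d(t_1)$, obtaining the two independent equations $2w_{3,1}=0$ and $2w_{2,4}=0$, after which everything follows from $w_{2,4}=w_{4,5}=-2w_{1,1}$; no other relation is invoked. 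You extract only the single equation $-2w_{2,4}=2(w_{3,1}+w_{3,2})$. The divergence is localized in the value of $(1-s_1t_1s_1)d(t_1)$: the paper records $(w_{3,1}-w_{3,2})(x_1-x_2)$, whereas $s_1t_1s_1(x_1)=-x_2$ and $s_1t_1s_1(x_2)=-x_1$ give $(w_{3,1}+w_{3,2})(x_1+x_2)$, whose $x_2$-component agrees identically with that of the left-hand side and hence yields no second equation. You need to settle this computation: if the paper's value is right, the braid relation alone proves the lemma and your extra machinery is unnecessary; if yours is right, a further relation is genuinely required.

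That further step, $4w_{1,1}=0$, is the gap in your proposal: it is never actually established. Route (a) is only a plan; before (P9) for $(i,j)=(-1,2)$ says anything you must expand $d(r_{-1,2})$ through (P12)(b), and you do not do so. Route (b) has two problems. First, the coefficient is wrong: from (P6) one gets $d(d_{-2,-1})=d(t_1^2)-d(d_{1,2})-d(a_1^{-2}a_2^{-2})=(w_{3,1}+w_{3,2}+w_{1,1})(x_1+x_2)$, not $(w_{3,1}+w_{3,2}+2w_{1,1})(x_1+x_2)$. Second, and more seriously, the presentation contains no commutation relation pairing $d_{-2,-1}$ with an element that moves $x_1+x_2$: in (P1) and (P2) the element $d_{-2,-1}$ is paired only with the $a_i$ and the $d_{r,s}$, all of which act trivially on $L$, so for such $\varphi$ one has $(1-\varphi)d(d_{-2,-1})=0$ and the identity $(1-\varphi)d(d_{-2,-1})=(1-d_{-2,-1})d(\varphi)$ constrains $d(\varphi)$ rather than the coefficient of $d(d_{-2,-1})$; and the relevant relation in (P8) involves $d_{-2,2}$, not $d_{-2,-1}$. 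So the ``cleaner alternative'' does not go through as stated, and the proposal as written is incomplete.
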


\begin{proof}
As in (P8), $t_{1}$ and $s_{1}t_{1}s_{1}$ commute.
Thus it must be $(1-t_{1})d(s_{1}t_{1}s_{1})=(1-s_{1}t_{1}s_{1})d(t_{1})$. 
Since 
\begin{align*}
d(s_{1}t_{1}s_{1})
&=d(s_{1})+s_{1}d(t_{1})+s_{1}t_{1}d(s_{1}) \\
&=(w_{2, 1}+w_{2, 2}-w_{2, 4}-w_{3, 1})x_{1}
+(w_{2, 1}+w_{2, 2}+w_{3, 2})x_{2}+w_{3, 3}x_{3}+w_{2, 4}(y_{1}+y_{2}), 
\end{align*}
we have 
\begin{align*}
(1-t_{1})d(s_{1}t_{1}s_{1})
&=-(w_{3, 1}+w_{3, 2})(x_{1}-x_{2})-2w_{2, 4}x_{1},\text{ while }\\
(1-s_{1}t_{1}s_{1})d(t_{1})
&=(w_{3, 1}-w_{3, 2})(x_{1}-x_{2}).
\end{align*}
Hence we have $2w_{2, 4}=2w_{3, 1}=0$.
Since $w_{2, 4}=w_{4, 5}=-2w_{1, 1}$, 
we also have $4w_{1, 1}=2w_{2, 4}=2w_{4, 5}=0$.
\end{proof}
\begin{lemma}\label{relation3-6}
\[ w_{4, 2}=2w_{4, 3}=0. \]
\end{lemma}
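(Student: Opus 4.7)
The plan is to apply relation (P9) at $(i,j)=(1,2)$, namely $r_{1,2}^{2}=s_{2}c_{1,2}s_{2}c_{1,2}^{-1}$. Since $c_{1,2}=d_{1,2}$ by definition, this reads $r_{1,2}^{2}=s_{2}d_{1,2}s_{2}d_{1,2}^{-1}$. Applying $d$ to both sides and using $d(d_{1,2})=0$ (which follows from Lemma~\ref{relation3-1}(6) together with the already-known $w_{1,2}=0$), the cocycle identity collapses to
\[
(1+r_{1,2})\,d(r_{1,2}) \;=\; (1+s_{2}d_{1,2})\,d(s_{2}).
\]

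Before expanding, I would first compute $d(s_{2})$ from the definition $s_{2}=k_{1}s_{1}k_{1}^{-1}$ with $k_{1}=a_{1}a_{2}t_{1}d_{1,2}^{-1}$. The cocycle identity for a conjugation gives $d(s_{2})=(1-s_{2})\,d(k_{1})+k_{1}\,d(s_{1})$, while $d(k_{1})=d(a_{1})+d(a_{2})+d(t_{1})$ since $a_{1},a_{2}$ act trivially on each $x_{\ell}$ and $d(d_{1,2})=0$. Substituting the values already established and using $2w_{3,3}=0$ from Lemma~\ref{relation3-1}(3), the computation yields
\[
d(s_{2}) \;=\; w_{2,2}\,x_{1}+(w_{2,1}+2w_{1,1})\,x_{2}+w_{2,3}\,x_{3}+w_{2,4}\,y_{2}.
\]

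I would then expand each side of the displayed equation using Lemma~\ref{lem:action} and compare the $x_{i}$-coefficients. The $x_{3}$-coefficient equation reads $2w_{4,3}=2w_{2,3}$, which gives $2w_{4,3}=0$ by Lemma~\ref{relation3-4}. The $x_{2}$-coefficient equation reads $2w_{4,5}=w_{2,4}$; substituting $w_{4,5}=w_{2,4}=-2w_{1,1}$ from Lemma~\ref{vanish} forces $2w_{1,1}=0$, so $w_{4,5}=w_{2,4}=0$. The $x_{1}$-coefficient equation then collapses to $w_{4,2}=-2w_{2,2}$, which vanishes once more by Lemma~\ref{relation3-4}.

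The main obstacle is the coordinate-by-coordinate bookkeeping in expanding $(1+r_{1,2})\,d(r_{1,2})$ and $(1+s_{2}d_{1,2})\,d(s_{2})$; the structural content of the proof is simply the observation that relation (P9) at $(1,2)$ links the two remaining unknowns $w_{4,2},w_{4,3}$ to quantities already controlled by the previous lemmas through the vanishing of $d(d_{1,2})$. As a byproduct useful for subsequent lemmas, the argument also establishes $2w_{1,1}=0$ and $w_{4,5}=w_{2,4}=0$.
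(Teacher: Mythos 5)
Your overall strategy is the same as the paper's: apply $d$ to the relation $r_{1,2}^2=s_2d_{1,2}s_2d_{1,2}^{-1}$ from (P9) and compare coefficients. However, there is a genuine error at the very first step: $d(d_{1,2})$ is \emph{not} zero. The statement of Lemma~\ref{relation3-1}(6) contains a typo (the $w_{1,2}$ there should read $w_{1,1}$, exactly as in the genus-two analogue Lemma~\ref{relation2-1}); the proof of that lemma actually derives
\[
d(d_{1,2})=(-w_{1,1}-w_{4,5})(x_1+x_2)=w_{1,1}(x_1+x_2),
\]
and $w_{1,1}$ is not known to vanish --- it is precisely the parameter generating the $\mathbb{Z}/4\mathbb{Z}$ summand of the final answer. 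Consequently your cocycle identity is missing the term
\[
s_2\bigl(1-d_{1,2}s_2d_{1,2}^{-1}\bigr)d(d_{1,2})=-2w_{1,1}x_2
\]
on the right-hand side, and your $d(k_1)$ (hence $d(s_2)$) also acquires a spurious $2w_{1,1}x_2$, although that particular discrepancy happens to be annihilated by $1+s_2d_{1,2}$.

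The damage is concentrated in the $x_2$-coefficient. The correct equation there is $2w_{4,5}=2w_{2,4}$, which is vacuous by Lemma~\ref{relation3-5}; your version $2w_{4,5}=w_{2,4}$ yields the false relation $2w_{1,1}=0$, and the ``byproducts'' $2w_{1,1}=w_{2,4}=w_{4,5}=0$ that you propose to export to later lemmas would wrongly collapse $H_1(\mathcal{H}_3;H)$ to $(\mathbb{Z}/2\mathbb{Z})^2$, contradicting both the surjection onto $\mathbb{Z}/(2g-2)\mathbb{Z}=\mathbb{Z}/4\mathbb{Z}$ from Lemma~\ref{lem:morita-homo} and Theorem~\ref{thm:main theorem}. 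The conclusions actually claimed in the lemma do survive: the $x_1$-, $x_3$- and $y_3$-coefficients are unaffected by the correction and still give $w_{4,2}=0$, $2w_{4,3}=0$ and $2w_{4,6}=0$, which is how the paper proceeds. So the argument is repairable by restoring the $d(d_{1,2})$ terms, but as written it rests on a false identity and produces false auxiliary relations.
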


\begin{proof}
By the relation $r_{1, 2}^{2}=s_{2}d_{1, 2}s_{2}d_{1, 2}^{-1}$ in (P9),
we have $d(r_{1, 2}^{2})=d(s_{2}d_{1, 2}s_{2}d_{1, 2}^{-1})$.
First, we have
\[
d(r_{1, 2}^{2})
=d(r_{1, 2})+r_{1, 2}d(r_{1, 2}) 
=(-w_{4, 2}+w_{4 ,5})x_{1}+2w_{4, 3}x_{3}+2w_{4, 6}y_{3}.
\]
Next, we compute $d(s_{2}d_{1, 2}s_{2}d_{1, 2}^{-1})$.
Since $d_{1, 3}=t_{2}d_{1, 2}t_{2}^{-1}$ 
and $d_{2, 3}=t_{1}d_{1, 3}t_{1}^{-1}$, 
we have
\[
d(d_{1, 3})=w_{1, 1}(x_{1}+x_{3}) 
\text{ and }d(d_{2, 3})=w_{1, 1}(x_{2}+x_{3}).
\]
Recall that $k_{i}=a_{i}a_{i+1}t_{i}d_{i, i+1}^{-1}$ for $i=1, 2$. 
Hence we have 
\begin{align*}
d(k_{i})&=d(a_{i}a_{i+1}t_{i}d_{i, i+1}^{-1}) \\
&=d(a_{i})+a_{i}d(a_{i+1})+a_{i}a_{i+1}d(t_{i})
-k_{i}d(d_{i, i+1}) \\
&=d(t_{i}). 
\end{align*}
Since $s_{i+1}=k_{i}s_{i}k_{i}^{-1}$ for $i=1, 2$, 
\[ d(s_{i+1})=d(k_{i})+k_{i}d(s_{i})-s_{i+1}d(k_{i})
=k_{i}d(s_{i}). \]
Thus we obtain
\begin{align*}
d(s_{2}d_{1, 2}s_{2}d_{1, 2}^{-1})
&=(1+s_{2}d_{1, 2})d(s_{2})
+s_{2}(1-d_{1, 2}s_{2}d_{1, 2}^{-1})d(d_{1, 2}) \\
&=-2w_{1, 1}x_{2}+w_{2, 4}(x_{1}+x_{2}) \\
&=w_{2, 4}x_{1}.
\end{align*}
Comparing $d(r_{1, 2}^{2})$ and $d(s_{2}d_{1, 2}s_{2}d_{1, 2}^{-1})$,
we have $w_{4, 2}=2w_{4, 3}=0$.
\end{proof}
\begin{lemma}\label{relation3-7}
\[ w_{3, 1}+w_{3, 2}=w_{4, 5}, w_{3, 3}=w_{4, 3} 
\text{ and }w_{4, 6}=0. \]
In particular, 
\[ 2w_{3, 1}=2w_{3, 2}=2w_{3, 3}=2w_{4, 3}=0. \]
\end{lemma}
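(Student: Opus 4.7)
The plan is to invoke relation (P11) with $(i,j) = (1,2)$, whose hypothesis $-i+1 \ne j$ is satisfied since $0 \ne 2$. As in the genus~2 case treated in Lemma~\ref{relation2-3}, this relation is equivalent to the braid-type identity $t_1 r_{1,2} t_1 = r_{1,2} t_1 r_{1,2}$. Applying the crossed homomorphism $d$ to both sides yields the cocycle identity
\[
d(t_1) + t_1 \, d(r_{1,2}) + t_1 r_{1,2} \, d(t_1) = d(r_{1,2}) + r_{1,2} \, d(t_1) + r_{1,2} t_1 \, d(r_{1,2}).
\]

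Next, I would substitute the reduced forms
\[
d(t_1) = w_{3,1} x_1 + w_{3,2} x_2 + w_{3,3} x_3, \qquad d(r_{1,2}) = w_{4,3} x_3 + w_{4,5} y_2 + w_{4,6} y_3,
\]
obtained from Lemmas~\ref{relation3-1}, \ref{relation3-2}, and \ref{relation3-6}, and expand each side using the action formulas in Lemma~\ref{lem:action}. Comparing the coefficients of $x_2$, $x_3$, and $y_3$ on both sides should then yield the three linear conditions $w_{3,1} + w_{3,2} + w_{4,5} = 0$, $w_{3,3} = w_{4,3}$, and $w_{4,6} = 0$. Since $2 w_{4,5} = 0$ by Lemma~\ref{relation3-5}, the first of these is equivalent to $w_{3,1} + w_{3,2} = w_{4,5}$, giving all three stated equalities.

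The ``in particular'' assertions follow immediately: $2 w_{4,3} = 2 w_{3,3} = 0$ from Lemma~\ref{relation3-1}~(3) via the identity $w_{3,3} = w_{4,3}$; and $2 w_{3,2} = 2(w_{4,5} - w_{3,1}) = 0$ from Lemma~\ref{relation3-5}, while $2 w_{3,1} = 0$ is already recorded there.

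The main obstacle is purely computational. Since $r_{1,2}$ acts nontrivially on $y_1$ and $y_2$ (producing $x_1 + x_2 + y_1 - y_2$ and $x_1 + 2 x_2 - y_2$ respectively) and mixes with the $t_1$-swap of $x_1$ and $x_2$, the six terms appearing in the cocycle identity generate many intertwined contributions to each basis coordinate. Tabulating the coefficients of $x_1, x_2, x_3, y_1, y_2, y_3$ separately and cross-checking is essential, but no new conceptual ingredient beyond the techniques already employed earlier in this section is required.
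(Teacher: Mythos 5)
Your proposal follows the paper's own proof essentially verbatim: both apply the crossed-homomorphism identity to the braid relation $t_1r_{1,2}t_1=r_{1,2}t_1r_{1,2}$ from (P11), reduce $d(t_1)$ and $d(r_{1,2})$ using the earlier lemmas, and compare coefficients of $x_2$, $x_3$, $y_3$ (your sign ambiguity in the $x_2$-coefficient is correctly resolved by $2w_{4,5}=0$). The derivation of the ``in particular'' statements also matches the paper's.
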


\begin{proof}
The relation $t_{1}r_{1, 2}t_{1}=r_{1, 2}t_{1}r_{1, 2}$ in (P11) shows
\[
(1-t_{1}+r_{1, 2}t_{1})d(r_{1, 2})=(1-r_{1, 2}+t_{1}r_{1, 2})d(t_{1}).
\] 
A straightforward calculation shows
\begin{align*}
(1-r_{1, 2}+t_{1}r_{1, 2})d(t_{1})
&=(w_{3, 1}+w_{3, 2})x_{2}+w_{3, 3}x_{3},\text{ and }\\
(1-t_{1}+r_{1, 2}t_{1})d(r_{1, 2})
&=w_{4, 5}x_{2}+w_{4, 3}x_{3}+w_{4, 6}y_{3}.
\end{align*}
Thus we have $w_{3, 1}+w_{3, 2}=w_{4, 5}$, $w_{3, 3}=w_{4, 3}$, and $w_{4, 6}=0$.
\end{proof}
\begin{lemma}\label{relation3-8}
\[ w_{2, 1}=0. \]
\end{lemma}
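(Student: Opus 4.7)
The plan is to combine two commutation relations from Wajnryb's presentation to isolate $w_{2,1}$.

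First, apply $[r_{1,2}, s_3] = 1$ from (P10)(c), valid since $k = 3 > j = 2$. Compute $d(s_3)$ inductively from $s_{i+1} = k_i s_i k_i^{-1}$: using $d(k_i) = d(t_i)$ (from the proof of Lemma~\ref{relation3-6}) and the fact that $(1 - s_{i+1}) d(t_i) = 2 w_{3, i+1} x_{i+1}$ vanishes by Lemmas~\ref{relation3-1}(3) and \ref{relation3-2}, one gets $d(s_{i+1}) = k_i d(s_i)$, and hence $d(s_3) = w_{2,2}(x_1 + x_2) + w_{2,1} x_3 + w_{2,4} y_3$. A short calculation yields $(1 - r_{1,2}) d(s_3) = w_{2,2} x_1$ while $(1 - s_3) d(r_{1,2}) = 2 w_{3,3} x_3 = 0$, so $w_{2,2} = 0$.

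Second, apply $[r_{-1,2}, z_2] = 1$ from (P10)(e), which holds because $j = 2 = g + i$ for $(i, j) = (-1, 2)$ and $g = 3$. Note that $z_2 = z$ here, since the product $k_{j-1} \cdots k_{g+1-j}$ in the definition of $z_j$ is empty. Direct evaluation yields $z(x_1) = -x_3$, $z(x_2) = -x_2$, $z(x_3) = -x_1$. To obtain $d(r_{-1,2})$, rewrite (P12)(b) using $c_{-1,1} = d_{-1,1} = s_1^2 a_1^4$, $c_{1,1} = a_1$, and the (P9) identity $s_2 c_{-1,2} s_2 c_{-1,2}^{-1} = r_{-1,2}^2$, which reduces the formula to $r_{-1,2} = a_1^2 s_1^2 r_{1,2} s_2^{-1} s_1 r_{1,2} s_1^{-1}$. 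Evaluating $d$ on this word gives $d(r_{-1,2}) = -w_{2,1} x_1 + (\text{linear combination}) x_2 - w_{2,2} x_3$, with no $y$-components. Since $(1 - r_{-1,2})$ produces only $x_2, y_2$ components, the $x_3$-coefficient of $(1 - z) d(r_{-1,2}) = (1 - r_{-1,2}) d(z)$ must vanish, yielding $w_{2,1} + w_{2,2} = 0$. Combined with $w_{2,2} = 0$, this gives $w_{2,1} = 0$.

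The main obstacle will be the seven-factor conjugation computation of $d(r_{-1,2})$ via the simplified (P12)(b) formula: each factor's $d$-value must be transported through the remaining prefix, and the bookkeeping relies on repeated use of the known relations $w_{2,4} = -2 w_{1,1}$, $2 w_{2,4} = 0$, $4 w_{1,1} = 0$, $w_{4,5} = w_{2,4}$, $w_{4,3} = w_{3,3}$, and $2 w_{3,3} = 0$. Everything else is short substitution.
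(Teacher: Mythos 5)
Your route is genuinely different from the paper's (which deduces $w_{2,1}=0$ by expanding $d(r_{1,3})$ via the word in (P12)(a) and comparing $d(r_{1,3}^2)$ with $d(s_3c_{1,3}s_3c_{1,3}^{-1})$ from (P9)), but as written it proves too much, and that is a fatal problem. Your first step concludes $w_{2,2}=0$. In the paper's final normal form for genus $3$ one has $d(s_1)=w_{2,2}(x_2+x_3)+2w_{1,1}y_1$ with $w_{2,2}$ a \emph{surviving} parameter subject only to $2w_{2,2}=0$; it is precisely this parameter that produces the extra $\mathbb{Z}/2\mathbb{Z}$ summand in $H_1(\mathcal{H}_3;H)\cong\mathbb{Z}/4\mathbb{Z}\oplus\mathbb{Z}/2\mathbb{Z}$, consistent with $H_1(\mathcal{H}_3;L)\cong H_1(\Out F_3;H^1(F_3))\cong(\mathbb{Z}/2\mathbb{Z})^2$ from Satoh's theorem. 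If $[r_{1,2},s_3]=1$ really forced $w_{2,2}=0$ in the way you compute, the answer would collapse to $\mathbb{Z}/4\mathbb{Z}$ and Theorem~\ref{thm:main theorem} would be false for $g=3$. Your second step has the same defect: $w_{2,1}+w_{2,2}=0$ together with the conclusion $w_{2,1}=0$ of Lemma~\ref{relation3-8} would again kill $w_{2,2}$. So the two steps cannot both be accepted alongside the paper's verification that the final $d$ is compatible with (P1)--(P12).

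Concretely, the suspicious point is the evaluation of $(1-r_{1,2})d(s_3)$: the entire contribution comes from the single term $w_{2,3}(1-r_{1,2})(x_2)=w_{2,3}(x_1+2x_2)$, so your argument hinges simultaneously on $d(s_3)=k_2k_1d(s_1)$ carrying the coefficient $w_{2,3}$ onto $x_2$, on $r_{1,2}(x_2)=-x_1-x_2$, and on $[r_{1,2},s_3]$ being a defining relation exactly as stated in (P10)(c). At least one of these inputs must fail (or else you have located an inconsistency in the presentation as reproduced here, which would have to be resolved before the lemma, and indeed the main theorem, could be trusted). Until that discrepancy is tracked down and the offending step identified, the proposal cannot be accepted as a proof of $w_{2,1}=0$; I recommend reverting to the paper's argument via (P12)(a) and (P9), which uses only quantities already pinned down in Lemmas~\ref{relation3-1}--\ref{relation3-7} and does not impose any new constraint on $w_{2,2}$.
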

\begin{proof}
As in (P12), $r_{1, 3}=s_{3}c_{1, 3}s_{3}c_{1, 3}^{-1}
k_{2}a_{3}a_{1}t_{2}d_{1, 2}^{-1}t_{2}^{-1}r_{1, 2}^{-1}s_{2}h_{2}r_{1, 2}^{-1}h_{2}^{-1}k_{2}^{-1}$, 
where $h_{2}=k_{2}^{-1}t_{1}^{-1}k_{2}$. 
Since 
\begin{align}
d(h_{2})&=w_{3, 1}x_{1}+w_{3, 2}x_{2}+w_{3, 3}x_{3} \text{ and }\notag\\
d(s_{3}c_{1, 3}s_{3}c_{1, 3}^{-1})&=w_{2, 4}(x_{1}+x_{2}), \label{dscsc}
\end{align}
we have 
\begin{align*}
d(r_{1, 3})&=(-w_{2, 1}+w_{2, 2})x_{1}+(w_{2, 3}+w_{3, 2})x_{2}-w_{2, 1}x_{3}
+w_{4, 5}y_{3} \text{ and} \\
d(r_{1, 3}^{2})
&=d(r_{1, 3})+r_{1, 3}d(r_{1, 3}) 
=(-w_{2, 1}+w_{4 ,5})x_{1}+(w_{2, 1}+w_{4 ,5})x_{2} 
\end{align*}
by a straightforward calculation. 
The relation $r_{1, 3}^{2}=s_{3}c_{1, 3}s_{3}c_{1, 2}^{-1}$ in (P9)
and the equations (\ref{dscsc}) show 
that $w_{2, 1}=0$.
\end{proof}

\begin{lemma}\label{relation3-9}
\[ w_{3, 2}=w_{3, 3}=w_{4, 3}=0. \]
In particular, 
\[ w_{2, 4}=w_{3, 1}=w_{4, 5}=2w_{1, 1}. \]
\end{lemma}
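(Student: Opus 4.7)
The equalities $w_{3,2}=w_{3,3}$ and $w_{3,3}=w_{4,3}$ are already established in Lemma~\ref{relation3-2} and Lemma~\ref{relation3-7} respectively, so proving $w_{3,2}=w_{3,3}=w_{4,3}=0$ reduces to showing that any one of them is zero. My plan is to locate a single Wajnryb relation that has not been exploited in the preceding lemmas, whose differentiation isolates the common value $w_{3,3}$ and forces it to vanish.

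The most natural candidates are the commutation $[r_{-1,2},s_{3}]=1$ from (P10)(c), since $s_{3}$ is the simplest mapping class whose action on $H_A$ is concentrated on $(x_{3},y_{3})$, and the commutation $[r_{1,3},z_{3}]=1$ from (P10)(e). For either choice, the first step is to obtain an explicit formula for $d$ on the new element. For $r_{-1,2}$ I would use the expression supplied by (P12)(b) specialised to $g=3,\,j=2$ (so $h_{3}=s_{1}$) and reduce the resulting sum using the already-known values $d(a_{i})$, $d(s_{1})$, $d(t_{1})$, $d(d_{1,2})$, $d(r_{1,2})$ together with the auxiliary $d(c_{-1,2})$ and $d(c_{1,3})$, computed directly from their definitions and the observation $d(k_i)=d(t_i)$ already made in the proof of Lemma~\ref{relation3-6}. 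For $z_{3}=k_{2}k_{1}z$ the analogous computation expands $d(z)$ from the definition of $z$.

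With $d(r_{-1,2})$ (or $d(z_{3})$) in hand, the commutator relation yields
\[
(1-s_{3})\,d(r_{-1,2})=(1-r_{-1,2})\,d(s_{3}),
\]
and I would read off the $x_{3}$- or $y_{3}$-component of both sides using the action formulas of Lemma~\ref{lem:action}. Because $s_{3}$ acts trivially on everything outside the $(x_{3},y_{3})$-pair, the left-hand side collapses to a short expression linear in the $x_3$- and $y_3$-coefficients of $d(r_{-1,2})$, and it is precisely the $x_3$-coefficient that carries $w_{3,3}$ through the formula for $d(r_{1,2})$ entering (P12)(b). The right-hand side is already computable from the formula $d(s_{3})=w_{2,2}(x_{1}+x_{2})+w_{2,4}y_{3}$ (obtained inductively via $d(s_{i+1})=k_{i}d(s_{i})$) and does not involve $w_{3,3}$. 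The resulting scalar identity should collapse to $w_{3,3}=0$.

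Once $w_{3,3}=0$ is established, the ``in particular'' assertion is immediate: Lemma~\ref{relation3-7} gives $w_{3,1}+w_{3,2}=w_{4,5}$, hence $w_{3,1}=w_{4,5}$; and since Lemma~\ref{relation3-5} asserts $4w_{1,1}=0$, the identity $w_{2,4}=w_{4,5}=-2w_{1,1}$ becomes $w_{2,4}=w_{3,1}=w_{4,5}=2w_{1,1}$. The principal obstacle is purely bookkeeping: the word representing $r_{-1,2}$ through (P12)(b) is long, and the product rule for crossed homomorphisms multiplies terms rapidly, so the key will be to track only one coordinate throughout the computation rather than evaluating all of $d(r_{-1,2})$.
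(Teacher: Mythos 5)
Your reduction is correct: by Lemmas~\ref{relation3-2} and \ref{relation3-7} the three quantities $w_{3,2}=w_{3,3}=w_{4,3}$ coincide, so it suffices to kill any one of them, and your derivation of the ``in particular'' clause from $4w_{1,1}=0$ and $w_{3,1}+w_{3,2}=w_{4,5}$ is exactly right. The gap is in the relation you actually develop. The commutator $[r_{-1,2},s_{3}]=1$ cannot do the job: by Lemma~\ref{lem:action}, $s_{3}$ fixes every basis vector except $x_{3}\mapsto -x_{3}$, $y_{3}\mapsto 2x_{3}-y_{3}$, and $r_{-1,2}$ likewise only moves $x_{2}\mapsto -x_{2}$, $y_{2}\mapsto 2x_{2}-y_{2}$. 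Hence $(1-s_{3})v=(2v_{3}-2v_{6})x_{3}+2v_{6}y_{3}$ and $(1-r_{-1,2})v=(2v_{2}-2v_{5})x_{2}+2v_{5}y_{2}$ for every $v\in H_{A}$: every coefficient on both sides carries an overall factor of $2$. Consequently each scalar identity extracted from $(1-s_{3})d(r_{-1,2})=(1-r_{-1,2})d(s_{3})$ has the form $2(\text{linear expression in the }w\text{'s})=0$, and since $2w_{3,3}=0$ is already known from Lemma~\ref{relation3-1}~(3), the coefficient $w_{3,3}$ drops out of every such identity. The relation is therefore vacuous for the one coefficient you need to kill; this is the same phenomenon as in Lemma~\ref{relation3-4}, where the analogous commutator with $s_{1}$ only yields $2w_{2,2}=0$. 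So the step ``the resulting scalar identity should collapse to $w_{3,3}=0$'' fails, and no amount of bookkeeping with $d(r_{-1,2})$ from (P12)(b) will rescue it.

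Your second candidate, $[r_{1,3},z_{3}]=1$ from (P10)(e), is what the paper actually uses, and it works precisely because $1-r_{1,3}$ is not divisible by $2$: for instance the $x_{1}$-component of $(1-r_{1,3})v$ is $v_{3}-v_{4}-v_{5}-v_{6}$. The paper computes $d(z_{3})=d(z)$ directly from the definition of $z$ as a word in $a_{i}$, $s_{1}$, $t_{i}$, $c_{1,3}$ (all of whose $d$-values are known at this stage, so (P12) is never needed), finds $(1-r_{1,3})d(z_{3})=w_{3,2}(x_{1}+x_{2})$ while $(1-z_{3})d(r_{1,3})=0$, and concludes $w_{3,2}=0$. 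If you carry out that branch of your plan in detail the proof goes through; as written, the branch you develop does not.
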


\begin{proof}
It is sufficient to prove that $w_{3, 2}=0$. 
Recall that $z=(a_{1}a_{2}a_{3})s_{1}t_{1}t_{2}s_{1}t_{1}s_{1}c_{1, 3}$ 
and $z_{3}=k_{2}k_{1}z$. 
Hence we have 
\[ d(z)=w_{2, 4}(x_{1}+x_{2}+x_{3}+y_{1}+y_{2}+y_{3})
+w_{3, 3}x_{1}+w_{3, 1}x_{2}+w_{3, 2}x_{2},\]
and $d(z_{3})=d(z)$. 
Since $r_{1, 3}$ and $z_{3}$ commute by the relation (P10)(e), 
it must be 
$(1-r_{1, 3})d(z_{3})=(1-z_{3})d(r_{1, 3})$. 
A straightforward calculation shows
\[ (1-r_{1, 3})d(z_{3})=w_{3, 2}(x_{1}+x_{2}) \]
while $(1-z_{3})d(r_{1, 3})=0$. 
Thus we obtain $w_{3, 2}=0$.
\end{proof}
Summarizing Lemmas \ref{relation3-1}, 
\ref{relation3-2}, \ref{relation3-4}, \ref{relation3-5},
\ref{relation3-6}, \ref{relation3-7}, \ref{relation3-8}
and \ref{relation3-9},
we have 
\begin{align*}
d(a_{1})=w_{1, 1}x_{1}, d(s_{1})=w_{2, 2}(x_{2}+x_{3})+2w_{1, 1}y_{1}, \\
d(t_{1})=2w_{1, 1}x_{1}, \text{ and } d(r_{1, 2})=2w_{1, 1}y_{2} 
\end{align*}
where $4w_{1, 1}=2w_{2, 2}=0$. 
It can be verified that such $d$ 
is compatible with the relations (P1)--(P12).
Now we have 
\begin{align*}
H^{1}(\hb{3}; H_A)\cong 
\Ker f_{3}\cong
\{(w_{1, 1}, w_{2, 2})\in A^{2}; 4w_{1, 1}=2w_{2, 2}=0 \} .
\end{align*}
\begin{prop}\label{prop:H/L genus3}
\[
H_1(\hb{3};H/L)\cong (\mathbb{Z}/2\mathbb{Z})^2,
\]
and the image of the homomorphism $H_2(\hb{3};H/L)\to H_1(\hb{3};L)$ induced by
the exact sequence $0\to L\to H\to H/L\to 0$ is isomorphic to $\mathbb{Z}/2\mathbb{Z}$.
\end{prop}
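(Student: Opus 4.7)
The plan is to follow the strategy of Proposition~\ref{prop:H/L genus2}: carry out a direct cohomological computation on $H_A/L_A$ via the Wajnryb presentation, then combine it with the long exact sequence of the short exact sequence $0 \to L \to H \to H/L \to 0$ to extract both assertions.

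First, I would establish an analog of Lemma~\ref{lem:homology-cond}, identifying $H^1(\hb{3}; H_A/L_A)$ with the kernel of an appropriate linear form $f_3'\colon Z^1(\hb{3}; H_A/L_A) \to A^?$ chosen so that its composition with the coboundary map $\delta$ gives an isomorphism on $H_A/L_A$. Parametrizing a crossed homomorphism $d' \colon \hb{3} \to H_A/L_A$ by its values on $\{a_1, s_1, t_1, r_{1, 2}, u\}$, I would then push the defining relations (P1)--(P12) through, mirroring Lemmas~\ref{relation3-1}--\ref{relation3-9}. Because the mapping classes $a_i$ and $d_{i,j}$ act trivially on $H/L$, many commutation relations reduce to $(1-\varphi)d'(a_i)=0$, tightening the constraints on $d'(a_i)$. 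I expect the computation to yield $H^1(\hb{3}; H_A/L_A) \cong \{(w, w') \in A^2 : 2w = 2w' = 0\}$, which by the universal coefficient theorem recovers $H_1(\hb{3}; L) \cong (\mathbb{Z}/2\mathbb{Z})^2$ from Lemma~\ref{lem:first homology-H}.

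Next, I would analyze the map $H^1(\hb{3}; H_A) \to H^1(\hb{3}; H_A/L_A)$ induced by the projection. Projecting the explicit cocycles found in Section~5.2 to $H_A/L_A$ yields
\[
d'(a_1)=0,\quad d'(s_1) = 2 w_{1,1}\, \bar{y}_1,\quad d'(t_1)=0,\quad d'(r_{1,2}) = 2 w_{1,1}\, \bar{y}_2,
\]
a one-parameter family depending on $2 w_{1,1}$, of order $2$ since $4 w_{1,1} = 0$. Hence the image of this cohomology map is $\mathbb{Z}/2\mathbb{Z}$; dually (via the universal coefficient theorem and the identification $L \cong (H/L)^*$ coming from the intersection pairing on $\Sigma_g$), the map $\phi \colon H_1(\hb{3}; L) \to H_1(\hb{3}; H)$ has image of order $2$ sitting inside the $\mathbb{Z}/4\mathbb{Z}$ summand of $H_1(\hb{3}; H)$. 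Now invoke the long exact sequence
\[
H_2(\hb{3}; H/L) \to H_1(\hb{3}; L) \xrightarrow{\phi} H_1(\hb{3}; H) \to H_1(\hb{3}; H/L) \to 0,
\]
exact by $H_0(\hb{3}; L) = 0$ (Lemma~\ref{lem:H_0-L}). Quotienting $\mathbb{Z}/4\mathbb{Z} \oplus \mathbb{Z}/2\mathbb{Z}$ by the $2$-torsion of the first summand gives $(\mathbb{Z}/2\mathbb{Z})^2$, so $H_1(\hb{3}; H/L) \cong (\mathbb{Z}/2\mathbb{Z})^2$. The kernel of $\phi$ then has order $|H_1(\hb{3}; L)| / |\mathrm{im}(\phi)| = 2$ and equals the image of $H_2(\hb{3}; H/L) \to H_1(\hb{3}; L)$; since $(\mathbb{Z}/2\mathbb{Z})^2$ has only one isomorphism class of order-$2$ subgroup, that image is isomorphic to $\mathbb{Z}/2\mathbb{Z}$.

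The main obstacle is the first step: executing the Wajnryb bookkeeping with $d'$ taking values in $H_A/L_A$. The module is smaller than $H_A$, but certain key relations become vacuous (because $a_i$ and $d_{i,j}$ act trivially on $H/L$), so the remaining relations — especially the braid relation, the mixed identity $s_1 t_1 s_1 t_1 = t_1 s_1 t_1 s_1$, and the higher relations (P9) and (P12) — must carry the weight of pinning down the two surviving parameters and forcing each of them to have order~$2$.
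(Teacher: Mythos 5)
Your overall strategy lands close to the paper's own argument, but it is reorganized, and the paper never computes $H^1(\hb{3};H_A/L_A)$ from the Wajnryb presentation. Instead it observes that the explicit cocycles of Section~5.2 with $2w_{1,1}=0$ take values in $L_A$, so that $\{(w_{1,1},w_{2,2}):2w_{1,1}=2w_{2,2}=0\}$ lies in $\Im(H^1(\hb{3};L_A)\to H^1(\hb{3};H_A))$; dualizing, $\Im(H_1(\hb{3};H)\to H_1(\hb{3};H/L))$ has order at least $4$, and comparison with the upper bound $|H_1(\hb{3};H/L)|\le4$ of Remark~\ref{rem:genus23H/L} (Luft plus Satoh) forces $H_1(\hb{3};H/L)\cong(\mathbb{Z}/2\mathbb{Z})^2$; the last claim then follows by counting orders in the four-term exact sequence, using $H_1(\hb{3};L)\cong(\mathbb{Z}/2\mathbb{Z})^2$ from Lemma~\ref{lem:first homology-H}. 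In particular your first step --- redoing the Wajnryb bookkeeping with values in $H_A/L_A$, which you single out as the main obstacle --- is unnecessary for $g=3$: since $H_A/L_A\cong\Hom(L,A)$, that computation would only recover $H_1(\hb{3};L)$, which is already supplied by Lemma~\ref{lem:first homology-H} via $\Out F_3$. (It is genuinely needed for $g=2$, where Lemma~\ref{lem:first homology-H} says nothing.)

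The one genuine gap is in your second step. From the projected cocycles $d'(s_1)=2w_{1,1}\bar y_1$, $d'(r_{1,2})=2w_{1,1}\bar y_2$, $d'(a_1)=d'(t_1)=0$ you conclude that the image of $H^1(\hb{3};H_A)\to H^1(\hb{3};H_A/L_A)$ is $\mathbb{Z}/2\mathbb{Z}$, hence that $\phi$ is nonzero with image the order-two subgroup of the $\mathbb{Z}/4\mathbb{Z}$ summand. But a priori the nonzero member of this one-parameter family could be a coboundary in $Z^1(\hb{3};H_A/L_A)$ (the normalization $\Ker f_3$ is a complement to $B^1(\hb{3};H_A)$, not to $B^1(\hb{3};H_A/L_A)$), in which case the image would be $0$, $\phi$ would vanish, and your exact sequence would give $\Im(H_2(\hb{3};H/L)\to H_1(\hb{3};L))=\Ker\phi\cong(\mathbb{Z}/2\mathbb{Z})^2$ --- a different answer. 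So the nonvanishing is load-bearing and must be checked. The verification is short: a coboundary $\delta(\bar v)$ with $\bar v=v_1\bar y_1+v_2\bar y_2+v_3\bar y_3$ satisfies $\delta(\bar v)(t_1)=0$ only if $v_1=v_2$, and then $\delta(\bar v)(s_1)=-2v_1\bar y_1$ while $\delta(\bar v)(r_{1,2})=-3v_1\bar y_2$, which cannot both equal $c\bar y_1$ and $c\bar y_2$ unless $c=0$. Alternatively, $\phi\ne0$ follows at once from the bound $|H_1(\hb{3};H/L)|\le4$ of Remark~\ref{rem:genus23H/L}, which is how the paper sidesteps the issue. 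With either patch, the rest of your argument --- the identification of $\Im\phi$ inside the $\mathbb{Z}/4\mathbb{Z}$ summand via the annihilator of $\Ker\phi^*$, and the order count in the exact sequence --- is correct.
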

\begin{proof}
As we saw in the proof of Theorem~\ref{thm:main theorem},
under the isomorphism 
\[
H^1(\hb{3};H_A)\cong \{(w_{1,1}, w_{2,2})\in A^2; 4w_{1,1}=2w_{2,2}=0\},
\]
the submodule $\{(w_{1,1}, w_{2,2})\in A^2; 2w_{1,1}=2w_{2,2}=0\}$
is in $\Im(H^{1}(\hb{3}; L_A)\to H^{1}(\hb{3}; H_A))$.
The universal coefficient theorem implies
$\Im(H_1(\hb{3};H)\to H_1(\hb{3};H/L))$ is of order at least $4$.
On the other hand, $H_1(\hb{3};H/L)$ is at most order $4$ as explained in Remark~\ref{rem:genus23H/L}.
Thus we obtain $H_1(\hb{3};H/L)\cong (\mathbb{Z}/2\mathbb{Z})^2$.

By Lemma~\ref{lem:H_0-L}, the coinvariant $H_0(\hb{3};L)$ is trivial.
Thus the homomorphism $H_1(\hb{3};H)\to H_1(\hb{3};H/L)$ is surjective.
Since $H_1(\hb{3};H)\cong\mathbb{Z}/4\mathbb{Z}\oplus \mathbb{Z}/2\mathbb{Z}$, 
we have 
$H_1(\hb{3};L)\cong (\mathbb{Z}/2\mathbb{Z})^2$ 
and $H_1(\hb{3};H/L)\cong (\mathbb{Z}/2\mathbb{Z})^2$. 
The exact sequence
\[
\begin{CD}
H_2(\hb{3};H/L)@>>>H_1(\hb{3};L)@>>>H_1(\hb{3};H)@>>>H_1(\hb{3};H/L)@>>>0
\end{CD}
\]
shows $\Im(H_2(\hb{3};H/L)\to H_1(\hb{3};L))\cong \mathbb{Z}/2\mathbb{Z}$.
\end{proof}

\section{Proof of Theorem~\ref{thm:main theorem2}}
In this section, we prove Theorem~\ref{thm:main theorem2},
and calculate $H_1(\mathcal{H}_g^*;L)$ and $H_1(\mathcal{H}_g^*;H/L)$.

\begin{lemma}\label{lem:coinvariant}
\[
(L\otimes L^*)_{\mathcal{H}_g}\cong (L\otimes H)_{\mathcal{H}_g}\cong\mathbb{Z}.
\]
\end{lemma}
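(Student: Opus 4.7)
The plan is to produce a trace-type surjection from each coinvariant onto $\mathbb{Z}$ and then show the coinvariant is cyclic on a single generator.

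To set up the surjections, I would use the algebraic intersection form on $H$. Since this pairing is $\mathcal{H}_g$-invariant and $L$ is a Lagrangian sublattice (with $\langle x_i, y_j\rangle = \delta_{ij}$ in the standard basis), its restriction induces an $\mathcal{H}_g$-equivariant unimodular pairing $L \otimes (H/L) \to \mathbb{Z}$, hence an $\mathcal{H}_g$-equivariant isomorphism $L^* \cong H/L$. The intersection form therefore defines $\mathcal{H}_g$-invariant surjections $\mu \colon L \otimes H \to \mathbb{Z}$ and $\mu' \colon L \otimes L^* \to \mathbb{Z}$, giving surjections $(L \otimes H)_{\mathcal{H}_g} \twoheadrightarrow \mathbb{Z}$ and $(L \otimes L^*)_{\mathcal{H}_g} \twoheadrightarrow \mathbb{Z}$.

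Next, I would show that the projection $H \to H/L$ induces an isomorphism
\[
(L \otimes H)_{\mathcal{H}_g} \xrightarrow{\cong} (L \otimes L^*)_{\mathcal{H}_g}.
\]
Right exactness of coinvariants applied to $0 \to L \otimes L \to L \otimes H \to L \otimes (H/L) \to 0$ reduces this to showing $x_k \otimes x_i \equiv 0$ in $(L \otimes H)_{\mathcal{H}_g}$ for all $k,i$. This is immediate: by Lemma~\ref{lem:action}, $a_i$ fixes $L$ pointwise and satisfies $a_i(y_i) = y_i + x_i$, so $(a_i - 1)(x_k \otimes y_i) = x_k \otimes x_i$.

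Finally, I would prove $(L \otimes L^*)_{\mathcal{H}_g}$ is cyclic on $x_1 \otimes \bar{y}_1$, where $\bar{y}_j$ is the class of $y_j$ in $H/L$. The mapping class $r_{1,j}$ from Lemma~\ref{lem:luft-homology}~(1) fixes $x_1$ and sends $\bar{y}_1$ to $\bar{y}_1 - \bar{y}_j$ in $H/L$; the identity $(r_{1,j} - 1)(x_1 \otimes \bar{y}_1) = -x_1 \otimes \bar{y}_j$ then forces $x_1 \otimes \bar{y}_j \equiv 0$ for every $j \neq 1$. By Lemma~\ref{lem:action}, $t_1,\ldots,t_{g-1}$ act on both $L$ and $H/L$ as the adjacent transpositions of the bases $\{x_i\}$ and $\{\bar{y}_i\}$, generating a diagonal symmetric-group action on $L \otimes L^*$ whose orbits on basis pairs are $\{(k,l) : k \neq l\}$ and $\{(k,k)\}$. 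Propagating $x_1 \otimes \bar{y}_j \equiv 0$ yields $x_k \otimes \bar{y}_l \equiv 0$ for all $k \neq l$, while propagating $x_1 \otimes \bar{y}_1$ gives $x_i \otimes \bar{y}_i \equiv x_1 \otimes \bar{y}_1$ for every $i$. Since $\mu'(x_1 \otimes \bar{y}_1) = 1$, we conclude $(L \otimes L^*)_{\mathcal{H}_g} \cong \mathbb{Z}$, and combined with the previous step, also $(L \otimes H)_{\mathcal{H}_g} \cong \mathbb{Z}$.

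The main bookkeeping hurdle is establishing that $L^* \cong H/L$ as $\mathcal{H}_g$-modules; this reduces to the $\mathcal{H}_g$-invariance of the intersection form, after which everything else follows from the explicit actions recorded in Lemma~\ref{lem:action} and the generating set of handlebody-preserving mapping classes already used in Lemmas~\ref{lem:unit-tan} and \ref{lem:luft-homology}.
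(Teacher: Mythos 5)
Your proof is correct, and it shares part of its skeleton with the paper's argument: both identify $L^*\cong H/L$ via the intersection form, both use right-exactness of coinvariants on $0\to L\otimes L\to L\otimes H\to L\otimes(H/L)\to 0$, and both kill the image of $(L\otimes L)_{\mathcal{H}_g}$ by the identity $a_j(x_i\otimes y_j)-x_i\otimes y_j=x_i\otimes x_j$. Where you genuinely diverge is in computing $(L\otimes L^*)_{\mathcal{H}_g}$ itself: the paper observes that the action on $L$ factors through (and surjects onto) $\GL(g;\mathbb{Z})$ and simply cites the known fact $(V\otimes V^*)_{\GL(g;\mathbb{Z})}\cong\mathbb{Z}$, whereas you reprove that fact by hand inside $\mathcal{H}_g$, using the explicit elements $r_{1,j}$ to kill $x_1\otimes\bar{y}_j$ for $j\neq 1$ and the transpositions $t_i$ to propagate, then pinning down the answer with the evaluation pairing. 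Your route is more self-contained and never needs the surjectivity of $\mathcal{H}_g\to\GL(g;\mathbb{Z})$, at the cost of some extra orbit bookkeeping; the paper's route is shorter but leans on an external computation and on the fact that the $\mathcal{H}_g$-coinvariants agree with the $\GL(g;\mathbb{Z})$-coinvariants. Both are valid; your verification that the $t_i$ act as simultaneous adjacent transpositions on the bases $\{x_k\}$ and $\{\bar{y}_l\}$, and that the diagonal $S_g$-orbits on index pairs are exactly the diagonal and off-diagonal sets, checks out against Lemma~\ref{lem:action}.
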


\begin{proof}
The action of $\mathcal{H}_g$ on $L$ factors through $\GL(g;\mathbb{Z})$,
and $L$ is isomorphic to $V=\mathbb{Z}^g$ endowed with the natural left $\GL(g;\mathbb{Z})$-module.
Thus, the fact that $(V\otimes V^*)_{\GL(g;\mathbb{Z})}\cong \mathbb{Z}$ implies $(L\otimes L^*)_{\mathcal{H}_g}=\mathbb{Z}$.

Next, recall that the intersection form on $H$ induces an isomorphism $L^*\cong H/L$.
Since there is an exact sequence 
\[
\begin{CD}
(L^{\otimes2})_{\mathcal{H}_g}@>>> (L\otimes H)_{\mathcal{H}_g}@>>> (L\otimes L^*)_{\mathcal{H}_g}@>>>0,
\end{CD}
\]
it suffices to show that $\Im((L^{\otimes 2})_{\mathcal{H}_g}\to (L\otimes H)_{\mathcal{H}_g})$ is trivial.
Since this image is generated by $x_i\otimes x_j$ for $1\le i\le g$ and $1\le j\le g$, and
\[
a_j(x_i\otimes y_j)-x_i\otimes y_j=x_i\otimes x_j,
\]
we obtain $\Im((L^{\otimes 2})_{\mathcal{H}_g}\to (L\otimes H)_{\mathcal{H}_g})=0$.
\end{proof}

\begin{proof}[Proof of Theorem~\ref{thm:main theorem2}]
The exact sequence
$1\to \mathbb{Z}\to \mathcal{H}_{g,1}\to\mathcal{H}_g^*\to 1$
induces an exact sequence
\[
\begin{CD}
H_1(\mathbb{Z};H)_{\mathcal{H}_g^*}@>>>H_1(\mathcal{H}_{g,1};H)@>>>H_1(\mathcal{H}_g^*;H)@>>>0.
\end{CD}
\]
Since $H_1(\mathbb{Z};H)_{\mathcal{H}_g^*}\cong H_{\mathcal{H}_g^*}=0$,
we obtain an isomorphism $H_1(\mathcal{H}_{g,1};H)\cong H_1(\mathcal{H}_g^*;H)$.

Next, we compute $H_1(\mathcal{H}_g^*;H)$.
Morita showed that $H_1(\mathcal{M}_g^*;H)\cong \mathbb{Z}$ 
and the forgetful exact sequence
$1\to\pi_1\Sigma_g\to \mathcal{M}_g^*\to\mathcal{M}_g\to 1$ induces 
an exact sequence
\[
\begin{CD}
0@>>> \mathbb{Z}@>>> H_1(\mathcal{M}_g^*;H)@>>> H_1(\mathcal{M}_g;H)@>>>0
\end{CD}
\]
when $g\ge2$. 
In Lemma~\ref{lem:coinvariant}, we showed the isomorphism $H_1(\pi_1\Sigma_g;H)_{\mathcal{H}_g^*}\cong \mathbb{Z}$ induced by the intersection form on $H$.
Thus, restricting the exact sequence to $\mathcal{H}_g^*$,
we obtain a commutative diagram
\[
\begin{CD}
@.\mathbb{Z}@>>> H_1(\mathcal{H}_g^*;H)@>>> H_1(\mathcal{H}_g;H)@>>>0\\
@.@|@VVV@VVV\\
0@>>> \mathbb{Z}@>>> H_1(\mathcal{M}_g^*;H)@>>> H_1(\mathcal{M}_g;H)@>>>0.
\end{CD}
\]
By the above diagram,
both the kernels and the cokernels of the homomorphisms $H_1(\mathcal{H}_g^*;H)\to H_1(\mathcal{M}_g^*;H)$ and $H_1(\mathcal{H}_g;H)\to H_1(\mathcal{M}_g;H)$ coincide.
By Remark~\ref{rem:surjective-homo} and Theorem~\ref{thm:main theorem},
we see that $\Coker(H_1(\mathcal{H}_g^*;H)\to H_1(\mathcal{M}_g^*;H))$ 
is trivial for $g\ge2$ 
and that $\Ker(H_1(\mathcal{H}_g^*;H)\to H_1(\mathcal{M}_g^*;H))$ 
is trivial when $g\ge4$ 
and is isomorphic to $\mathbb{Z}/2\mathbb{Z}$ when $g=2,3$.
Thus we can determine $H_1(\mathcal{H}_g^*;H)$.
\end{proof}

\begin{prop}
\begin{enumerate}
\item
When $g\ge2$, the forgetful homomorphism
$\mathcal{H}_g^*\to \mathcal{H}_g$
induces an isomorphism
\[
H_1(\mathcal{H}_g^*;H/L)\cong H_1(\mathcal{H}_g;H/L).
\]
In particular, we have
\[
H_1(\mathcal{H}_g^*;H/L)\cong
\begin{cases}
\mathbb{Z}/2\mathbb{Z}&\text{if }g\ge4,\\
(\mathbb{Z}/2\mathbb{Z})^2&\text{if }g=2,3.
\end{cases}
\]
\item 
When $g\ge4$,
the homomorphism $H_1(\mathcal{H}_g^*;L)\to H_1(\mathcal{H}_g^*;H)$
induced by the inclusion $L\to H$ is injective.
When $g=2,3$,
$\Ker(H_1(\mathcal{H}_g^*;L)\to H_1(\mathcal{H}_g^*;H))\cong \mathbb{Z}/2\mathbb{Z}$.
In particular, we have
\[
H_1(\mathcal{H}_g^*;L)\cong
\begin{cases}
\mathbb{Z}&\text{if }g\ge4,\\
\mathbb{Z}\oplus\mathbb{Z}/2\mathbb{Z}&\text{if }g=2,3.
\end{cases}
\]
\end{enumerate}
\end{prop}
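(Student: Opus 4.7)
The plan is to establish Part (1) first, then use it together with Theorem~\ref{thm:main theorem2} and the long exact sequence for $0\to L\to H\to H/L\to 0$ to deduce Part (2).

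For Part (1), I would apply the five-term exact sequence of Lyndon--Hochschild--Serre to the extension $1\to\pi_{1}\Sigma_{g}\to\mathcal{H}_{g}^{*}\to\mathcal{H}_{g}\to 1$. Since inner automorphisms of $\pi_{1}\Sigma_{g}$ act trivially on $H_{1}(\Sigma_{g})$, the normal subgroup $\pi_{1}\Sigma_{g}$ acts trivially on $H/L$, and the sequence reads
\[
(H\otimes H/L)_{\mathcal{H}_{g}}\longrightarrow H_{1}(\mathcal{H}_{g}^{*};H/L)\longrightarrow H_{1}(\mathcal{H}_{g};H/L)\longrightarrow 0.
\]
The asserted isomorphism would follow from the vanishing $(H\otimes H/L)_{\mathcal{H}_{g}}=0$; the explicit values of $H_{1}(\mathcal{H}_{g};H/L)$ are already known (Lemma~\ref{lem:first homology-H}; Propositions~\ref{prop:H/L genus2} and~\ref{prop:H/L genus3}). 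To establish the vanishing, tensor $0\to L\to H\to H/L\to 0$ with $H$ on the second factor and take coinvariants:
\[
(H\otimes L)_{\mathcal{H}_{g}}\longrightarrow(H\otimes H)_{\mathcal{H}_{g}}\longrightarrow(H\otimes H/L)_{\mathcal{H}_{g}}\longrightarrow 0.
\]
A swap-of-factors version of Lemma~\ref{lem:coinvariant} gives $(H\otimes L)_{\mathcal{H}_{g}}\cong\mathbb{Z}$, and the proof of Theorem~\ref{thm:main theorem2} identifies $(H\otimes H)_{\mathcal{H}_{g}}\cong\mathbb{Z}$ via the intersection form. Identifying generators carefully, the generator of $(H\otimes L)_{\mathcal{H}_{g}}$ is the single term $y_{1}\otimes x_{1}$ (corresponding to $E_{11}\in\mathrm{End}(L)_{\mathcal{H}_{g}}\cong\mathbb{Z}$ under the intersection-form identification $H/L\cong L^{*}$); its image in $(H\otimes H)_{\mathcal{H}_{g}}$ pairs to $\langle y_{1},x_{1}\rangle=-1$, hence is a generator, so the induced map is an isomorphism and $(H\otimes H/L)_{\mathcal{H}_{g}}=0$.

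For Part (2), I would use the long exact sequence in $\mathcal{H}_{g}^{*}$-homology from $0\to L\to H\to H/L\to 0$; since $L_{\mathcal{H}_{g}^{*}}=0$ by Lemma~\ref{lem:H_0-L}, it terminates in
\[
\cdots\to H_{2}(\mathcal{H}_{g}^{*};H/L)\to H_{1}(\mathcal{H}_{g}^{*};L)\xrightarrow{\iota_{*}}H_{1}(\mathcal{H}_{g}^{*};H)\to H_{1}(\mathcal{H}_{g}^{*};H/L)\to 0.
\]
By Part (1) and Theorem~\ref{thm:main theorem2} the rightmost two terms are known, so $\Im(\iota_{*})$ is determined as $\Ker(H_{1}(\mathcal{H}_{g}^{*};H)\to H_{1}(\mathcal{H}_{g}^{*};H/L))$; for $g\geq 4$ this is $2\mathbb{Z}\subset\mathbb{Z}=H_{1}(\mathcal{H}_{g}^{*};H)$. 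To compute $H_{1}(\mathcal{H}_{g}^{*};L)$ itself, I would combine the five-term sequence $\mathbb{Z}=(H\otimes L)_{\mathcal{H}_{g}}\to H_{1}(\mathcal{H}_{g}^{*};L)\to H_{1}(\mathcal{H}_{g};L)\to 0$ with the commutative diagram relating it to the $H$-coefficient five-term (which is short exact by Theorem~\ref{thm:main theorem2}). A diagram chase shows the image of $(H\otimes L)_{\mathcal{H}_{g}}$ in $H_{1}(\mathcal{H}_{g}^{*};H)$ is $(2g-2)\mathbb{Z}$, while a lift $\tilde\xi\in H_{1}(\mathcal{H}_{g}^{*};L)$ of a generator of $H_{1}(\mathcal{H}_{g};L)=\mathbb{Z}/(g-1)$ has $\iota_{*}(\tilde\xi)$ congruent to $2$ modulo $2g-2$; since $\gcd(2g-2,2)=2$, these jointly generate $2\mathbb{Z}$, matching the required $\Im(\iota_{*})$ and forcing the extension to be non-split, giving $H_{1}(\mathcal{H}_{g}^{*};L)\cong\mathbb{Z}$ and $\iota_{*}$ injective. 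For $g=2,3$, the extra $\mathbb{Z}/2$ summand of $H_{1}(\mathcal{H}_{g}^{*};H)$ from Theorem~\ref{thm:main theorem2} contributes a kernel $\mathbb{Z}/2$, yielding $H_{1}(\mathcal{H}_{g}^{*};L)\cong\mathbb{Z}\oplus\mathbb{Z}/2$.

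The main obstacle is the generator identification in Part (1): the naive choice of $\sum_{i}y_{i}\otimes x_{i}$ as the generator of $(H\otimes L)_{\mathcal{H}_{g}}$ would make the induced map multiplication by $g$, giving the incorrect answer $(H\otimes H/L)_{\mathcal{H}_{g}}\cong\mathbb{Z}/g$. The correct generator is the single-term $y_{1}\otimes x_{1}$, reflecting that $\mathrm{End}(L)_{\GL(g,\mathbb{Z})}\cong\mathbb{Z}$ is generated by $E_{11}$ (trace $1$) rather than the identity endomorphism (trace $g$).
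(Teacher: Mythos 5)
Your Part (1) hinges on the vanishing $(H\otimes H/L)_{\mathcal{H}_g}=0$, and this is false for $g=2$, which is precisely the case where the statement needs the most care. Indeed, the wedge map $H\otimes H/L\to \Lambda^2(H/L)$, $v\otimes \bar w\mapsto \bar v\wedge\bar w$, is $\mathcal{H}_g$-equivariant, and for $g=2$ one has $\Lambda^2(H/L)\cong\mathbb{Z}$ on which $\mathcal{H}_2$ acts through the determinant of its action on $H/L\cong H_1(H_2)$; since $s_1$ acts on $H/L$ with determinant $-1$, the coinvariants $(\Lambda^2(H/L))_{\mathcal{H}_2}\cong\mathbb{Z}/2\mathbb{Z}$ receive a surjection from $(H\otimes H/L)_{\mathcal{H}_2}$, which is therefore nonzero (in fact $(H\otimes H/L)_{\mathcal{H}_2}\cong((H/L)^{\otimes2})_{\GL(2,\mathbb{Z})}\cong\mathbb{Z}/2\mathbb{Z}$). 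The same obstruction shows that $(H\otimes H)_{\mathcal{H}_2}$ surjects onto $\mathbb{Z}\oplus\mathbb{Z}/2\mathbb{Z}$, so your map $(H\otimes L)_{\mathcal{H}_2}\to(H\otimes H)_{\mathcal{H}_2}$ cannot be surjective and the ``generator pairs to $-1$'' argument does not close the loop. (For $g\ge3$ the vanishing is correct, e.g.\ by checking directly that $((H/L)^{\otimes2})_{\GL(g,\mathbb{Z})}=0$ using elementary matrices, so your Part (1) is fine there.) At $g=2$ the five-term sequence only yields surjectivity of $H_1(\mathcal{H}_2^*;H/L)\to H_1(\mathcal{H}_2;H/L)$ with kernel a quotient of $\mathbb{Z}/2\mathbb{Z}$, and you would still have to show that this class dies, i.e.\ is hit by the transgression from $H_2(\mathcal{H}_2;H/L)$. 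A similar problem occurs in Part (2) for $g=2,3$: the assertion that the extra $\mathbb{Z}/2\mathbb{Z}$ summand of $H_1(\mathcal{H}_g^*;H)$ ``contributes a kernel'' is not an argument --- that summand maps injectively into $H_1(\mathcal{H}_g^*;H/L)$ and is irrelevant to $\Ker\iota_*$; what must be computed is the image of the connecting map $H_2(\mathcal{H}_g^*;H/L)\to H_1(\mathcal{H}_g^*;L)$, i.e.\ you must rule out $\Ker\iota_*=0$, and nothing in your sketch does so.

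The paper sidesteps both difficulties by never using the $H/L$-coefficient five-term sequence. It compares the five-term sequences of $1\to\pi_1\Sigma_g\to\mathcal{H}_g^*\to\mathcal{H}_g\to1$ with coefficients in $L$ and in $H$, linked by the maps induced by $L\hookrightarrow H$; Lemma~\ref{lem:coinvariant} identifies the leftmost vertical arrow as an isomorphism of copies of $\mathbb{Z}$, and a snake-lemma chase then shows that $\Ker$ and $\Coker$ of $H_1(\mathcal{H}_g^*;L)\to H_1(\mathcal{H}_g^*;H)$ coincide with those of $H_1(\mathcal{H}_g;L)\to H_1(\mathcal{H}_g;H)$, which are already known from Remark~\ref{rem:L-H} and Propositions~\ref{prop:H/L genus2} and \ref{prop:H/L genus3}. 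This yields both parts simultaneously, including the low-genus cases. Your Part (2) argument for $g\ge4$ (computing $\Im\iota_*=2\mathbb{Z}$ two ways and comparing indices) is workable and essentially a reorganization of the same diagram, but to repair the proposal you would need either to adopt the paper's $L$-versus-$H$ comparison for $g=2,3$ or to supply the missing transgression computations.
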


\begin{proof}
Consider the exact sequences between homology groups with coefficients in $L$
induced by the forgetful exact sequences
$1\to\pi_1\Sigma_g\to \mathcal{M}_g^*\to\mathcal{M}_g\to 1$
and its restriction $1\to\pi_1\Sigma_g\to \mathcal{H}_g^*\to\mathcal{H}_g\to 1$.
Applying Lemma~\ref{lem:coinvariant}, we obtain a commutative diagram
\[
\begin{CD}
@.\mathbb{Z}@>>>H_1(\mathcal{H}_g^*;L)@>>>H_1(\mathcal{H}_g;L)@>>>0\\
@.@|@VVV@VVV\\
0@>>>\mathbb{Z}@>>>H_1(\mathcal{H}_g^*;H)@>>>H_1(\mathcal{H}_g;H)@>>>0.
\end{CD}
\]
By the above diagram, 
both of the kernels and the cokernels of the homomorphisms
$H_1(\mathcal{H}_g^*;L)\to H_1(\mathcal{H}_g^*;H)$
and $H_1(\mathcal{H}_g;L)\to H_1(\mathcal{H}_g;H)$
coincide.
In particular, by Lemma~\ref{lem:H_0-L}, we obtain
\begin{align*}
H_1(\mathcal{H}_g^*;H/L)
&\cong \Coker(H_1(\mathcal{H}_g^*;L)\to H_1(\mathcal{H}_g^*;H))\\
&\cong \Coker(H_1(\mathcal{H}_g;L)\to H_1(\mathcal{H}_g;H))\\
&\cong H_1(\mathcal{H}_g;H/L).
\end{align*}
In Remark~\ref{rem:L-H} and Propositions~\ref{prop:H/L genus2} and \ref{prop:H/L genus3},
We see that
$\Ker(H_1(\mathcal{H}_g;L)\to H_1(\mathcal{H}_g;H))$ is trivial when $g\ge4$,
and is isomorphic to $\mathbb{Z}/2\mathbb{Z}$.
In Lemma~\ref{lem:first homology-H} and Propositions~\ref{prop:H/L genus2} and \ref{prop:H/L genus3},
we also see that
$\Coker(H_1(\mathcal{H}_g;L)\to H_1(\mathcal{H}_g;H))$
is isomorphic to $\mathbb{Z}/2\mathbb{Z}$ when $g\ge4$,
and is isomorphic to $(\mathbb{Z}/2\mathbb{Z})^2$ when $g=2,3$.
Thus, we can determine $H_1(\mathcal{H}_g^*;L)$.
\end{proof}

\vskip 5pt

\noindent \textbf{Acknowledgments.}
The authors wish to express their gratitude to
Susumu Hirose for his helpful advices. 
The first-named
author is supported by JSPS Research Fellowships
for Young Scientists (26$\cdot$110).


\providecommand{\bysame}{\leavevmode\hbox to3em{\hrulefill}\thinspace}
\providecommand{\MR}{\relax\ifhmode\unskip\space\fi MR }
\providecommand{\MRhref}[2]{%
  \href{http://www.ams.org/mathscinet-getitem?mr=#1}{#2}
}
\providecommand{\href}[2]{#2}

\end{document}